\newcommand{\Prob}{\mathbb{P}}
\newcommand{\Exp}{\mathbb{E}}
\newcommand{\R}{\mathbb{R}}
\newcommand{\C}{\mathbb{C}}
\newcommand{\Id}{\mathrm{Id}}
\newcommand{\iunit}{\mathbf{i}}
\newcommand{\cF}{\mathcal{F}}
\newcommand{\G}{\mathbf{G}}
\newcommand{\x}{\mathbf{x}}
\newcommand{\X}{\mathbf{X}}
\newcommand{\BA}{\mathbf{A}}
\DeclareMathOperator{\Tr}{Tr}
\DeclareMathOperator{\diff}{d}
\DeclareMathOperator{\sgn}{sgn}
\DeclareMathOperator{\Var}{Var}
\DeclareMathOperator{\rank}{rank}
\DeclareMathOperator{\Cov}{Cov}
\newtheorem{definition}{Definition}[section]
\newtheorem{theorem}{Theorem}[section]
\newtheorem{corollary}{Corollary}[section]
\newtheorem{proposition}{Proposition}[section]
\newtheorem{lemma}{Lemma}[section]
\theoremstyle{remark}
\newtheorem*{remark}{Remark}
\title{Central Limit Theorem for traces of the resolvents of “half-heavy tailed" Sample Covariance matrices}
\author{Svetlana Malysheva}
\date{\today}
\begin{document}

\maketitle
\begin{abstract}
    We consider the spectrum of the Sample Covariance matrix $\BA_N:= \frac{\X_N \X_N^*}{N}, $ where $\X_N$ is the $P\times N$ matrix with i.i.d. half-heavy tailed entries and $\frac{P}{N}\to y>0$ (the entries of the matrix have variance, but do not have the fourth moment). We derive the Central Limit Theorem for the Stieltjes transform of the matrix $\BA_N$ and compute the covariance kernel. Apart from that, we derive the Central Limit Theorem for the Stieltjes transform of overlapping Sample Covariance matrices. 
\end{abstract}
\section{Introduction}
% Introduce your model
In this paper, we will study the spectrum of Sample Covariance random matrices, i.e. the spectrum $\lambda_1, \dots, \lambda_P$ of matrices
\begin{equation}  \BA_N:=\frac{\X_N \X_N^{*}}{N},
\end{equation}
where
\begin{equation}
    X_N := \left(\x_1, \x_2, \dots, \x_N\right):=\left(x_{i j}\right)_{\substack{1\leq i\leq P \\ 1\leq j\leq N}}
\end{equation}
is a random $P\times N$ matrix  with i.i.d. entries. The dimensions of the matrix $\X_N$ will be assumed to be growing at the same speed, i.e. we will take the parameter $D\to +\infty$ such that
\begin{equation}
    \frac{P(D)}{N(D)} \to y\in \left(0, +\infty \right).
\end{equation}

When $\Exp\left|x_{i, j} \right|^4 < +\infty$ we will refer to this case as the “light-tailed" case, possibly without specifying all necessary conditions on  $x_{i, j}.$ 

In the case of our main interest $x_{i, j}$ will be taken to be regularly varying with parameter $\alpha$ such that $2<\alpha<4.$ This case will be called “half-heavy tailed". Particularly, in this case $\Exp\left|x_{i, j} \right|^2 < +\infty$ and $\Exp\left|x_{i, j} \right|^4 = +\infty.$

%Introduce LSS and how it's useful in stats 

For any function $\varphi $ defined on the spectrum of $\BA_N,$ the value 
\begin{equation}
    \theta^{\BA_N}_\varphi:= \frac{1}{P} \sum_{i=1}^P \varphi\left(\lambda_i\right)
\end{equation}
is called Linear Spectral Statistics (shortened as LSS), and $\varphi$ is called 
a test function. Central Limit Theorems for Linear Spectral Statistics usually refer to the pointwise by $\varphi$ convergence of re-normalised LSS to the Gaussian process with certain mean and covariance, where $\varphi$ belongs to some certain class of functions $\mathcal{D}.$ 
% the claim that a_n b_n is called CLT

Central Limit Theorems for linear spectral statistics can be used in hypothesis testing for high-dimensional data sets. If the number of observations of a certain high-dimensional random variable is proportional to the number of dimensions, CLT can be used to test if the covariance matrix of the random variable is the identity matrix. 
For example, when $\varphi(x):= x -\log x -1,$ $P\theta_{\phi}^{\BA_N}$ is called log-likelihood ratio statistics. For the “light-tailed case" it was shown in \cite{bai_corrections_2009} that when $P\sim N$, the test considering CLT (Corrected Likelihood Ratio Criterion) performs better than the traditional Likelihood Ratio Criterion (based on the convergence of $P\theta_{\phi}^{\BA_N}$ to $\chi_{\frac{1}{2} P(P+1)}^2$ when $P$ is fixed and $N \to + \infty$). Also, unlike previously existing tests (\cite{ledoit_hypothesis_2002}, \cite{srivastava_tests_2005}), it does not assume the normality of the data. More applications of CLTs to hypothesis testing can also be found in \cite[Chapter~9]{yao_large_2015}. Similarly, Central Limit Theorems for “half-heavy tailed" Sample Covariance matrices may have potential applications in testing high-dimensional data sets, whose entries are regularly varying random variables with exponent between 2 and 4. Such data sets include stock returns in short periods \cite{gabaix_theory_2003} and household incomes \cite{efthimiou_household_2016}.

For $\varphi(x):= \frac{1}{z-x},$ where $z\in \C \backslash \R,$
\begin{equation}     \theta^{\BA_N}_\varphi:= \frac{1}{P} \sum_{i=1}^P \frac{1}{z-\lambda_i}=\frac{1}{P} \Tr \left(z-\BA_N\right)^{-1},
     \label{eqn:reslss}
\end{equation}
and this way the trace of resolvent can also be considered as LSS. 
We will derive CLT for “half-heavy tailed" Sample Covariance matrices on the domain $\mathcal{D},$ that is a span of resolvent traces. In the “light-tailed case", it is possible to extend CLT (as it was done in \cite{bai_clt_2004}) from the resolvent traces to analytic functions on the open interval, containing the limiting spectrum. It was done by integration via contour, containing all the eigenvalues with probability converging to 1. The results \cite{bai_convergence_1988}, \cite{yin_limit_1988}, \cite{bai_limit_1993}, \cite{bai_no_1998} yield that it is possible to find such a contour. Nevertheless, in the “half-heavy tailed" case similar extension cannot be done the same way because the largest eigenvalues of $\BA_N$ tend to $+\infty$ (\cite{bai_note_1988}, \cite{auffinger_poisson_2009}).

Historically, the main directions of improvement of existing Central Limit Theorems for Linear Spectral Statistics included enlarging the class $\mathcal{D}$ of test functions and weakening the restrictions on the entries of the matrix $\X_N.$
%I need to discuss here what exactly Girko did because I am not sure. (Need to ask Boris) 
For the application of the CLT from \cite{bai_clt_2004}, the entries of $\X_N$ should have the first four moments matching the first four moments of the standard normal random variable, and the class $\mathcal{D}$ should consist of test functions that are analytic on the domain that contains the support of Marchenko-Pastur law. In \cite{lytova_central_2009}
the $4$th cumulant is not necessarily $0$ anymore, and the space of test functions is extended to those that have more than $5$ bounded derivatives. The term proportional to the $4$th cumulant was added. In the CLT provided in \cite{shcherbina_central_2011}, it is sufficient for the entries $\X_N$ to have at least $4+\epsilon$ moments, and the class of test functions is enlarged to functions $\varphi$ such that
\begin{equation}
\int(1+2|k|)^{3+\epsilon}|\widehat{\varphi}(k)|^2 d k < \infty.
\end{equation}

On the other hand, the CLT for LSS holds also for heavy-tailed random matrices i.e. those with $\alpha < 2$. In \cite{benaych-georges_central_2014}, the authors prove CLT for symmetric Lévy random matrices with infinite variance.

%4. summary of methodology for Wigner LSS, Maltsev-Benaych-Georges, Maltsev-Lodhia 5. short summary of your results 6. outline of the paper from before

Our methodology will be based on the methods used in \cite{BGM16}, \cite{LM22} and \cite{MM22}. In \cite{BGM16} CLT for resolvent traces of “half-heavy tailed" Wigner matrices were obtained and the covariance kernel was written in an integral form. The main steps of the proof include truncation, application of martingale CLT, removal of non-diagonal terms in the resolvents in the resulting formula and the computation of the covariance kernel using the approximation of $\mathbb{E}\left[\exp \left(-\mathbf{i}\left|\frac{\hat{x}_{i, j}}{\sqrt{N}}\right|^2 \lambda\right)\right]$ when $\Im \lambda <0.$ In \cite{LM22}, the integral in the expression obtained in \cite{BGM16} was evaluated explicitly by  separation of variables. Afterwards, the integral kernel associated with this covariance was extracted. Interestingly, the effect of the eigenvalues outside the limiting spectrum can be spotted in this kernel. 

As a first step, in this paper, we will prove the CLT for centred resolvent traces for “half-heavy tailed" Sample Covariance matrices, following \cite{BGM16}. Afterwards, similarly to \cite{LM22} we will simplify the expression obtained in the first step. Also, we will sketch the proof of CLT for the resolvent traces for overlapping “half-heavy tailed" Sample Covariance matrices. In the "light-tailed" case overlapping CLT was introduced in \cite{DP2018} for Sample Covariance matrices, and in \cite{borodin_clt_2014} for Wigner matrices; it got further development in \cite{soshnikov_2020}. With the methods in the first step, our overlapping CLT will not require the introduction of any new formulas or estimations, just extra attentiveness. Potential applications of the overlapping CLT include testing large-dimensional data sets with some missing data. 

The paper is organised as follows. In Section \ref{sec:mainproof} we will obtain CLT for the resolvent traces with limiting covariance in the form of an integral. In Subsection \ref{sec:truncation}, we will justify the truncation of the entries of $\X_N$ with the argument $N^{\frac{1}{4} + \frac{1}{\alpha} + \epsilon}$ for any $\epsilon>0.$ In Subsection \ref{sec:trunc_prop}, we will calculate the moments of truncated elements and approximate $\mathbb{E}\left[\exp \left(-\mathbf{i}\left|\frac{\hat{x}_{i, j}}{\sqrt{N}}\right|^2 \lambda\right)\right]$. In Subsection \ref{sec:mart_decomposition}, we will rewrite $\hat\theta_N(z)$ as a sum of martingale differences. It will then remain to prove the convergence of the expression, depending on the entries of matrix $\X_N$ and resolvents of $\BA_{N, k}$ (which are Sample Covariance matrices of $\X_N$ with one column removed). In Subsection \ref{sec:diagonalisation}, we will remove non-diagonal entries of the resolvents from this expression using moments calculated in Section \ref{sec:trunc_prop}. Here, we will use that $\alpha<4.$ Also, in this step we will need $\epsilon$ in the truncation argument to be small enough, such that inequality \eqref{eqn:epsilontochoose} holds. Finally, in Subsection \ref{sec:computation_of_limit}, we will use the approximation, computed in Subsection \ref{sec:trunc_prop}, to rewrite the expression from Section \ref{sec:diagonalisation} as an integral of $t$ and $s.$ We will upper-bound the absolute value of the function under the integral so as to justify our use of Dominated Convergence Theorem, and we compute its pointwise limit. In Section \ref{sec:covcalculation} we will calculate the integral expression, obtained for the limiting covariance in Section \ref{sec:mainproof}. In Section \ref{sec:overlapping} we will prove the CLT for overlapping matrices.

%Later here, I will need to mention papers with some population covariance matrix. In (Najim, Yao , 2 papers) it is positive with bonded norm. In (Zhou, Zheng, bai, Yao, Zhu) it is changed and AR(1) and AR(2) structures are tested.  

\section{Matrix Model, Notations and Main Results}
The random variables we will study in this paper have the following
distributional properties.

\begin{definition}[Regularly varying random variables]
\label{def:heavytails}
A real or complex random variable $Z$ is said to be heavy-tailed with 
parameter $\alpha$ if there exists a constant $c > 0$ and a 
slowly-varying function $\ell:\R^+\to\R^+$ for which
\[
\Prob\Big( \big\{ |Z| > x \big\} \Big) = \frac{-c
  \ell(x)}{\Gamma\left(1-\frac{\alpha}{2}\right) x^\alpha}, \qquad x> 0.
\]
Recall that a slowly-varying $\ell(x)$ satisfies
\[
\lim_{t\to\infty} \frac{\ell(tx)}{\ell(t)} = 1,
\]
for all $x \in \R^+$.
\end{definition}
\begin{remark}
To clarify, it is important to note that $\Gamma\left(1-\frac{\alpha}{2}\right)<0.$ We will focus solely on the function $\ell(\cdot)$, ensuring that its limit as $t$ approaches positive infinity is 1. We will select a suitable constant $c$ based on this criterion. The unconventional choice of $c$ stems from the intentional cancellation of $-\Gamma\left(1-\frac{\alpha}{2}\right)$ in Part 5 of Lemma \ref{lem:truncbound}, which subsequently manifests in the Main Results.
\end{remark}

We say ``half-heavy'' in this paper primarily because we will restrict
our study to the parameter ranges $2 < \alpha < 4$; the random matrix
literature often refers to the heavy-tail case as the parameter range
$0 < \alpha < 2$. We now construct the matrix model of interest in
this paper.

\begin{definition}[Half-Heavy tailed Sample Covariance Matrices]
\label{def:hlfhvywish}
Define a $P \times N$ matrix $\X_N$ whose entries are i.i.d random 
variables which are centred, have variance one and satisfy the tail 
decay of Definition~\ref{def:heavytails} with $2<\alpha<4$. Let
\[
\BA_N := \frac{\X_N\X_N^*}{N},
\]
be the Sample Covariance random matrix obtained from $X$.
\end{definition}

\begin{definition}[Empirical spectral measure]
For the $P\times P$ Hermitian matrix $\BA$ with eigenvalues $\lambda_1, \lambda_2, \dots, \lambda_P$ we define the empirical spectral measure as the measure $\mu_{\BA}$ with support on the real line such that  
$$
    \diff \mu_{\BA}(x):= \frac{1}{P}\sum_{i=1}^P \delta(x-\lambda_i) \diff x,
$$
    where $\delta(x)$ denotes Dirac delta function. 
\end{definition}

For matrices satisfying the properties of 
Definition~\ref{def:hlfhvywish}, the Marchenko-Pastur Law has been 
established, which we recall in the following Proposition (appearing as 
\cite[Theorem 3.6--7]{BS10})
\begin{proposition}
\label{prop:mplaw}
Let $\BA_N$ be as in Definition~\ref{def:hlfhvywish}, and suppose that $P$ 
and $N$ depend on an underlying parameter $D$ and tend to infinity in 
such a way that $\lim_{D\to\infty} P(D)/N(D) \to y \in (0,\infty)$. Let 
\begin{align*}
a &:= (1 - \sqrt{y})^2,\\
b &:= (1 + \sqrt{y})^2,
\end{align*}
then the empirical spectral measure of $\BA_N$, converges weakly 
almost surely to the Marchenko Pastur law, 
\[
\diff \mu_{\mathrm{MP},y}(x) =  p_y(x)\diff x  + 
\mathbf{1}_{y>1}\bigg(1 - \frac{1}{y}\bigg) \delta(x)\diff x
\]
where 
\[
p_y(x) =\frac{\sqrt{(b-x)(x-a)}}{2\pi x y} \mathbf{1}_{[a,b]}(x).
\]
\end{proposition}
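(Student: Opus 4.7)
The plan is a standard Stieltjes-transform argument preceded by truncation; the only tail assumption used here is $\Exp|x_{ij}|^{2}<\infty$, which follows from Definition~\ref{def:heavytails} whenever $\alpha>2$. The upper bound $\alpha<4$ plays no role in Proposition~\ref{prop:mplaw} and will enter only in the CLT analysis.

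The first step is a truncation--centering--rescaling reduction. Pick $\eta_D\downarrow 0$ with $\eta_D\gg N^{1/\alpha-1/2}$ (this window is nonempty precisely because $\alpha>2$), set $\hat{x}_{ij}:=x_{ij}\mathbf{1}_{\{|x_{ij}|\leq\eta_D\sqrt{N}\}}$, and let $\hat{\BA}_N$, $\tilde{\BA}_N$ be the sample covariance matrices built respectively from $\hat{x}_{ij}$ and from the centered--rescaled version $\tilde{x}_{ij}:=(\hat{x}_{ij}-\Exp\hat{x}_{ij})/\sqrt{\Var\hat{x}_{ij}}$. Since $\hat{\BA}_N$ differs from $\BA_N$ only in the columns of $\X_N$ containing a truncated entry, and the expected number of such columns is $O(PN(\eta_D\sqrt{N})^{-\alpha})=o(P)$ by our choice of $\eta_D$, the rank inequality $\|F^{A}-F^{B}\|_{\infty}\leq 2\,\rank(A-B)/P$ yields $\mu_{\hat{\BA}_N}-\mu_{\BA_N}\to 0$ weakly, almost surely. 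A Frobenius-norm estimate combined with standard rank inequalities handles the centering and rescaling step, reducing the problem to proving the proposition for centered, variance-one, bounded entries $|\tilde{x}_{ij}|\leq C\eta_D\sqrt{N}$.

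The second step is to identify the limiting Stieltjes transform of $m_N(z):=P^{-1}\Tr(\tilde{\BA}_N-z)^{-1}$ for $z\in\C\setminus\R$. Applying the Schur complement identity column by column, together with the quadratic-form concentration $\Exp|N^{-1}\x_k^{*}M\x_k-N^{-1}\Tr M|^{2}=O(\eta_D^{2}\|M\|^{2})$ valid for bounded entries, one shows that $\Exp m_N(z)$ approximately solves the Marchenko--Pastur self-consistent equation, whose unique Stieltjes-transform solution in the upper half plane is $m_{\mathrm{MP},y}$. To pass from $\Exp m_N(z)$ to $m_N(z)$, I would use the column-wise martingale decomposition $m_N(z)-\Exp m_N(z)=\sum_{k=1}^{N}(\Exp_k-\Exp_{k-1})m_N(z)$ and Burkholder's inequality, which yields a fourth-moment bound of order $N^{-2}$; Borel--Cantelli then gives $m_N(z)\to m_{\mathrm{MP},y}(z)$ almost surely on a countable dense subset of $\C\setminus\R$, and weak convergence of $\mu_{\BA_N}$ to $\mu_{\mathrm{MP},y}$ follows from the Stieltjes-transform continuity theorem.

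The main obstacle is the calibration of $\eta_D$: it must vanish slowly enough that the number of columns affected by truncation is $o(P)$, yet fast enough that the quadratic-form concentration error $O(\eta_D^{2})$ tends to zero. The first requirement translates to $\eta_D\gg N^{1/\alpha-1/2}$, whose compatibility with $\eta_D\to 0$ is precisely why the assumption $\alpha>2$ is needed. This is the only place where the half-heavy tail assumption enters in Proposition~\ref{prop:mplaw}; the upper bound $\alpha<4$ will play a role only in the off-diagonal removal step of the CLT analysis in Section~\ref{sec:mainproof}.
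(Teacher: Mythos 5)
The paper does not prove Proposition~\ref{prop:mplaw} at all: it is imported verbatim from \cite[Theorems~3.6--3.7]{BS10}, and your truncation-plus-Stieltjes-transform outline is essentially the argument of that reference, so your approach coincides with the (cited) proof. The only step I would tighten is the truncation reduction: showing that the \emph{expected} number of columns containing a truncated entry is $o(P)$ gives convergence in probability rather than the almost sure convergence claimed; since that count is a sum of independent Bernoulli indicators, Bernstein's inequality together with Borel--Cantelli supplies the almost sure version, just as you already do for $m_N(z)-\Exp m_N(z)$ via Burkholder.
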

For a probability measure $\mu(\cdot)$ with the support on the real line we define its Stieltjes transform in the point $z\in \C \backslash \R$ as 
\begin{equation}
    s_\mu(z):=\int_{\R}\frac{1}{z-\lambda} \diff \mu(\lambda).
\end{equation}

The resolvent matrix of $P\times P$ Hermitian matrix $\BA_N$ is defined as
\begin{equation}
\G_{\BA_N}(z) := (z\mathbf{\Id}_P - \BA_N)^{-1}, \qquad z \in \C\backslash \R.
\end{equation}
Noticeably, the renormalised trace of the resolvent of the matrix $\BA_N$ is equal to the Stieltjes transform of the empirical spectral distribution of the matrix $\BA_N:$
\begin{equation}
    s_{\BA_N}(z):=\int_{\R}\frac{1}{z-\lambda} \diff \mu_{\BA_N}(\lambda)=\sum_{i=1}^P\frac{1}{z-\lambda_i}= \frac{1}{P} \Tr \G_{\BA_N}(z).
\end{equation}
By \cite[Theorem~B.9]{BS10} the convergence of the probability measures on the real line is equivalent to the convergence of their Stieltjes transforms.  The Proposition \ref{prop:mplaw} is equivalent to the following statement holding. 

For random matrix $\BA_N$ satisfying Definition \ref{def:hlfhvywish} for each $z \in \C\backslash \R$
\[
\lim_{D\to\infty}\frac{1}{P} \Tr \G_{\BA_N}(z) = m_y(z) \qquad\hbox{almost 
surely},
\]
where $m_y(z)$ is the Stieltjes transform of $\mu_{\mathrm{MP},y}.$

The statement of the \cite[Lemma 3.11]{BS10} yields, that
\begin{equation}
\label{eqn:mpstieltj}
m_y(z) = \int_\R \frac{\mu_{\mathrm{MP},y}(\diff x)}{z-x} =  - \frac{z - 
(1 - y) - \sqrt{(z -1 - y)^2 - 4y}}{2yz}, 
\end{equation}
where the branch cut of the square root is taken on the 
positive real line.
It is easy to see, that  the Stieltjes transform of the Marchenko-Pastur law is the root of the following quadric equation:
\begin{equation}
\label{eqn:sttransformquadr}
    zym_y^2(z)+ (1-z-y)m_y(z)+1 =0.
\end{equation}

Our aim is to prove the following result regarding the fluctuation of 
the Stieltjes transform of $\BA_N$.
\begin{theorem}[Central Limit Theorem]
\label{thm:main}
Under the same assumptions of Proposition~\ref{prop:mplaw}, the linear 
spectral statistics 
\begin{equation}
	\theta_N(z)=\frac{1}{N^{1-\alpha/4}}(\Tr\G_{\BA_N}(z)-\mathbb{E}\Tr\G_{\BA_N}(z)).
\end{equation}
converges in distribution to a Gaussian process on $\C\backslash\R$ 
with covariance kernel 
\begin{equation}
\label{covkernel}
C(z, w)=\int_{t, s>0} \frac{\partial}{\partial z} 
\frac{\partial}{\partial w}\mathcal{L}(z, t, w, s) \diff t \diff s,
\end{equation}
where 
\begin{gather}
    \mathcal{L}(z, t, w, s):= y \exp\left(\iunit \sgn_{\Im z} t z+\iunit \sgn_{\Im w} sw\right) \times \ell(z, t, w, s) \times r(z, t, w, s)\\
    \ell(z, t, w, s):= c \frac{\left(K(z, t)+K(w, s)\right)^{\alpha/2}-K(z, t)^{\alpha/2}-K(w, s)^{\alpha/2}}{ts}\\
    r(z, t, w, s):= \exp\left(-yK(z, t) -yK(w, s)\right)
\end{gather}
where $\sgn_z$ is the sign of the imaginary part of $z$, 
$K(z,t)=it\sgn_z zm_{y}(z)$, and $m_{y}(z)$ is Stieltjes transform of 
Marchenko-Pastur law, equation~\eqref{eqn:mpstieltj}.
\end{theorem}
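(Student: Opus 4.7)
Following the strategy from \cite{BGM16} adapted to the Sample Covariance setting, the plan is to truncate the entries, decompose $\theta_N(z)$ into martingale differences indexed by the columns of $\X_N$, apply the martingale CLT, and identify the limiting covariance as an explicit double integral.

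\emph{Step 1: Truncation and moment estimates.} First I would replace each entry $x_{ij}$ by its truncation $\hat x_{ij} = x_{ij}\mathbf{1}\{|x_{ij}|\le N^{1/4+1/\alpha+\epsilon}\}$ for a small $\epsilon>0$ (to be fixed so that inequality \eqref{eqn:epsilontochoose} holds), and show that the difference between $\theta_N(z)$ and its truncated version is negligible on the scale $N^{1-\alpha/4}$. This uses the regular variation of $|x_{ij}|$ and the resolvent bound $\|\G_{\BA_N}(z)\|\le 1/|\Im z|$. Next I would record the moment asymptotics of the truncated variables, in particular the key asymptotic
\[
\Exp\Big[\exp\Big(-\iunit\Big|\tfrac{\hat x_{ij}}{\sqrt N}\Big|^2 \lambda\Big)\Big] = 1 - c\,\ell(\cdot)\,N^{-\alpha/2}|\lambda|^{\alpha/2}(1+o(1)) - \tfrac{\iunit\lambda}{N} + \ldots
\]
for $\Im\lambda<0$, which is the engine driving the non-Gaussian moment structure into the final covariance.

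\emph{Step 2: Martingale decomposition.} Let $\cF_k$ be the $\sigma$-algebra generated by the first $k$ columns of $\X_N$, and set $\Exp_k=\Exp[\cdot\,|\,\cF_k]$. Writing
\[
\hat\theta_N(z) = \frac{1}{N^{1-\alpha/4}}\sum_{k=1}^N (\Exp_k - \Exp_{k-1})\Tr \G_{\BA_N}(z),
\]
and applying the Sherman--Morrison formula with respect to adding column $\x_k$, each martingale increment $Y_k(z)$ can be rewritten, up to negligible error, in terms of $\x_k^*\G_{\BA_{N,k}}(z)^2\x_k$ and the quadratic form $\x_k^*\G_{\BA_{N,k}}(z)\x_k$, where $\BA_{N,k}$ is $\BA_N$ with column $k$ removed. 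Joint convergence at points $z,w$ will be established by checking the Lindeberg condition (easy from truncation) and computing $\sum_k \Exp_{k-1}[Y_k(z)\overline{Y_k(w)}]$.

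\emph{Step 3: Diagonalisation.} The bracketed sum contains both diagonal terms $\sum_i |G_{ii}|^2 |\hat x_{ik}|^2$-type contributions and off-diagonal terms involving $G_{ij}$ with $i\ne j$. Using the moment asymptotics from Step 1 together with independence of the entries within column $\x_k$, the off-diagonal terms contribute at a lower order, provided $\alpha<4$ and $\epsilon$ satisfies \eqref{eqn:epsilontochoose}. This is the step where the ``half-heavy'' hypothesis is used most crucially, and I expect it to be the main technical obstacle: estimates must balance the blow-up of high moments of truncated entries against the small probabilities involved.

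\emph{Step 4: Identification of the covariance.} After diagonalisation the conditional second moments become
\[
\sum_{k} \frac{1}{N^{2-\alpha/2}} \sum_{i,j}\Exp_{k-1}\!\Big[\psi_z\!\big(|\hat x_{ik}/\sqrt N|^2\big)\,\psi_w\!\big(|\hat x_{jk}/\sqrt N|^2\big)\Big] G_{ii}(z)^2\,\overline{G_{jj}(w)}^{\,2}+o(1),
\]
where $\psi_z(u)$ are the (derivatives with respect to $z$ of the) Fourier symbols arising from the quadratic form. Using the expansion from Step 1 to replace $\psi_z$ by an integral over $t$ of $\exp(\iunit\,\sgn_{\Im z} tz)$ weighted by $|t|^{\alpha/2-1}$, together with the concentration $\frac{1}{P}\Tr \G_{\BA_{N,k}}(z)\to m_y(z)$ and $\frac{1}{P}\Tr \G_{\BA_{N,k}}(z)^2\to m_y'(z)$ from Proposition~\ref{prop:mplaw}, the sum collapses to the prescribed double integral. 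A uniform upper bound $|\mathcal{L}(z,t,w,s)|\le Ce^{-c(t+s)}t^{\alpha/2-1}s^{\alpha/2-1}$ then lets dominated convergence promote pointwise convergence of the integrand to convergence of the integral, giving the kernel $C(z,w)$ in \eqref{covkernel}. Tightness and finite-dimensional convergence in $z$ together yield Gaussianity of the process, completing the proof.
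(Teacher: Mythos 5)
Your overall architecture (truncation at $N^{1/4+1/\alpha+\epsilon}$, martingale decomposition over columns, removal of off-diagonal resolvent entries, expansion of the characteristic function of $|\hat x/\sqrt N|^2$, dominated convergence) matches the paper's. But there is a genuine gap in how you pass from the martingale increments to the limiting covariance, concentrated in your Step 4.

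The increment is exactly $Y_k(z)=\frac{1}{N^{1-\alpha/4}}(\Exp_k-\Exp_{k-1})\frac{\frac1N\x_k^*\G_{N,k}(z)^2\x_k}{1-\frac1N\x_k^*\G_{N,k}(z)\x_k}$, and the formula you write in Step 4, a double sum $\sum_{i,j}\Exp_{k-1}[\psi_z(|\hat x_{ik}/\sqrt N|^2)\psi_w(|\hat x_{jk}/\sqrt N|^2)]G_{ii}(z)^2\overline{G_{jj}(w)}^2$, amounts to linearizing this ratio in the entries $|\hat x_{ik}|^2$ and pulling the resolvent factors outside the expectation. That reduction fails here for two reasons. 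First, any moment-based linearization puts you at the mercy of $\Exp|\hat x|^4\sim N^{\beta(4-\alpha)}\to\infty$, so the resulting ``variance'' diverges at the wrong rate and produces no $\alpha/2$ powers; the correct limit is driven not by second moments of the quadratic form but by the stable-law term $c(\iunit\lambda)^{\alpha/2}N^{-\alpha/2}$ in the expansion of $\phi_N(\lambda)=\Exp[\exp(-\iunit|\hat x/\sqrt N|^2\lambda)]$. Second, your expression has no mechanism to generate the factor $r(z,t,w,s)=\exp(-yK(z,t)-yK(w,s))$ in the kernel: in the actual computation this arises as the limit of the product $\prod_{j\ne i}\phi_N(tu_j)\phi_N(sv_j)$ over all the \emph{other} coordinates of the column, which survives precisely because the expectation of the exponential of the full quadratic form does not factor into per-coordinate contributions times deterministic resolvent weights. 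The missing idea is to keep the ratio exact: write $\frac{\partial_z\tilde g_{N,k}}{\tilde g_{N,k}}=\partial_z\log\tilde g_{N,k}$ and represent $\frac{1}{\tilde g_{N,k}(z)}=-\iunit\sgn(\Im z)\int_0^\infty\exp(\iunit\sgn(\Im z)\,t\,\tilde g_{N,k}(z))\,\diff t$ (valid since $\Re(\iunit\sgn(\Im z)\tilde g_{N,k}(z))\le-|\Im z|$). The conditional covariance then becomes a $(t,s)$-integral of $\Cov_{\x_k}[\exp(\iunit t\tilde g(z)),\exp(\iunit s\underline{\tilde g}(w))]=e^{\iunit tz+\iunit sw}(\prod_i\phi_N(tu_i+sv_i)-\prod_i\phi_N(tu_i)\phi_N(sv_i))$, and expanding each factor via part 5 of Lemma~\ref{lem:truncbound} yields both $\ell(z,t,w,s)$ (from the single non-factorizing coordinate) and $r(z,t,w,s)$ (from the remaining product). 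A related decoupling device (an independent copy of the columns after the $k$-th, Lemma~\ref{lemma:tildelemma}) is also needed to turn $\Cov_{k-1}$ into a conditional covariance over $\x_k$ alone; your sketch does not address this. Finally, two smaller points: the martingale CLT here is applied to real linear combinations using $\theta_N(\bar z)=\overline{\theta_N(z)}$, so the object to compute is $\Exp_{k-1}[Y_k(z)Y_k(w)]$ without conjugation; and the identification of the limit uses uniform-in-$i$ convergence of the diagonal entries $(\G_{N,k}(z))_{ii}\to m_y(z)$, not $\frac1P\Tr\G^2\to m_y'$.
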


Similarly to the work \cite{LM22}, we will compute rewrite $\ell(z, t, w, s)$ in an integral form, that will enable us to separate $(z, t)$ and $(w, s).$ It will allow us to calculate $\int_{t, s>0} \frac{\partial}{\partial z} 
\frac{\partial}{\partial w}\mathcal{L}(z, t, w, s) \diff t \diff s$ explicitly. 

\begin{theorem}
\label{thm:covkernrewrite}
For all $z, w\in \C\backslash \R$ holds
\begin{multline*}
   C(z, w) = -yc \Gamma\left(1+\frac{\alpha}{2}\right)\frac{\partial}{\partial z}\left(zm_y(z)\right)\frac{\partial}{\partial w}\left(wm_y(w)\right)\times \\ \frac{\left(-1+zm_y(z)\right)^{\alpha/2-1}- \left(-1+wm_y(w)\right)^{\alpha/2-1}}{zm_y(z) - w m_y(w)}.
    \label{eqn:main2}
\end{multline*}
\end{theorem}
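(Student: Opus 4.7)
The plan is to introduce an auxiliary variable $r$ that decouples the joint dependence on $(z,t)$ and $(w,s)$ in $\mathcal{L}$, so that the double $\diff t\,\diff s$ integral factorises. The only place where $(z,t)$ and $(w,s)$ are coupled is in the cross term $(K(z,t)+K(w,s))^{\alpha/2}$ inside $\ell$; I would rewrite it using
\[
(K_1+K_2)^{\alpha/2}-K_1^{\alpha/2}-K_2^{\alpha/2}
=\frac{(\alpha/2)(\alpha/2-1)}{\Gamma(2-\alpha/2)}\int_0^\infty r^{-1-\alpha/2}(1-e^{-K_1 r})(1-e^{-K_2 r})\,\diff r,
\]
obtained from the double antiderivative identity $(K_1+K_2)^{\alpha/2}-K_1^{\alpha/2}-K_2^{\alpha/2}=\frac{\alpha}{2}(\frac{\alpha}{2}-1)\int_0^{K_1}\int_0^{K_2}(u+v)^{\alpha/2-2}\,\diff u\,\diff v$ combined with the Gamma representation of $(u+v)^{\alpha/2-2}$, and continued analytically from $\Re K_i>0$ to the purely imaginary values $K_i=\iunit t\,\sgn_{\Im z}\,zm_y(z)$. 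Substituting this into $\mathcal{L}$ and swapping the order of integration makes the inner $t$- and $s$-integrals separate completely.

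Each inner integral is then a Frullani integral $\int_0^\infty t^{-1}(e^{-at}-e^{-bt})\,\diff t=\log(b/a)$ and evaluates to $\log(1+zm_y(z)r/(y\,zm_y(z)-z))$. The key algebraic simplification comes from the Marchenko--Pastur equation~\eqref{eqn:sttransformquadr}: substituting $\mu_z:=zm_y(z)$ into \eqref{eqn:sttransformquadr} and clearing denominators yields $\mu_z/(y\mu_z-z)=1-\mu_z$. With the analogous identity in $w$, the double integral $\int\!\!\int\mathcal{L}\,\diff t\,\diff s$ reduces to a constant multiple of $\int_0^\infty r^{-1-\alpha/2}\log(1+(1-\mu_z)r)\log(1+(1-\mu_w)r)\,\diff r$.

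Applying $\partial_z\partial_w$ inside the $\diff r$-integral, justified locally by dominated convergence, turns the two logarithms into rational factors and reduces the computation to
\[
\int_0^\infty\frac{r^{1-\alpha/2}}{(1+(1-\mu_z)r)(1+(1-\mu_w)r)}\,\diff r
=\Gamma(2-\alpha/2)\Gamma(\alpha/2-1)\,\frac{(1-\mu_z)^{\alpha/2-1}-(1-\mu_w)^{\alpha/2-1}}{\mu_w-\mu_z},
\]
evaluated by partial fractions followed by the beta integral $\int_0^\infty u^{s-1}(1+u)^{-1}\,\diff u=\Gamma(s)\Gamma(1-s)$ at $s=2-\alpha/2\in(0,1)$. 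Collecting the prefactors via $(\alpha/2)(\alpha/2-1)\Gamma(\alpha/2-1)=\Gamma(1+\alpha/2)$, and absorbing the factor $(-1)^{\alpha/2-1}$ that arises from $(1-\mu)^{\alpha/2-1}=(-1)^{\alpha/2-1}(-1+\mu)^{\alpha/2-1}$ into the unconventional sign choice of $c$ highlighted in the Remark after Definition~\ref{def:heavytails}, one recovers the expression for $C(z,w)$ stated in the theorem.

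The main obstacle will be the branch-cut bookkeeping: since $K(z,t)$ is purely imaginary, the Gamma representation of the cross term and the Frullani integrals are only conditionally convergent, so each identity above must be justified by analytic continuation with consistently chosen branches of $X\mapsto X^{\alpha/2-1}$ and $\log$ at both endpoints. Alongside that, the three interchanges of integrals and of $\partial_z\partial_w$ with $\diff r$ need uniform integrability, which combines the near-zero bound $|1-e^{-K_i r}|\le |K_i|r$ with the exponential decay provided by the factors $e^{\iunit\sgn_{\Im z}tz}$ in $t$ and similarly in $s$; the algebraic simplification $\mu_z/(y\mu_z-z)=1-\mu_z$ coming from \eqref{eqn:sttransformquadr} is short but carries the whole identification of the $(1-zm_y(z))^{\alpha/2-1}$ factor appearing in the final expression.
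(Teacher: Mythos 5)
Your proposal follows essentially the same route as the paper's proof: the same $r$-integral representation that decouples $(z,t)$ from $(w,s)$ (your prefactor $(\alpha/2)(\alpha/2-1)/\Gamma(2-\alpha/2)$ equals the paper's $1/\Gamma(-\alpha/2)$), the same Frullani evaluation of the inner $t$- and $s$-integrals, the same Marchenko--Pastur simplification to $zm_y(z)-1$, and the same partial-fraction/beta evaluation of the final $r$-integral with the identical collection of Gamma factors. The one caveat is that the mismatch between $(1-zm_y(z))^{\alpha/2-1}$ and $(-1+zm_y(z))^{\alpha/2-1}$ cannot be ``absorbed'' into the real positive constant $c$; it is resolved by the consistent branch choice you already flag as the main obstacle (the paper's Lemma~\ref{lemma:int_r} produces the $(-1+zm_y(z))^{\alpha/2-1}$ form directly).
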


Minor adjustments to the proof Theorem \ref{thm:main} in allow us to calculate the Limiting eigenvalue statistics of overlapping half-heavy-tailed Sample Covariance matrices. 
We will sketch the proof and do the necessary computations to prove the following theorem.  

\begin{theorem}
Suppose that matrix $\X_N$ is as in the Theorem \ref{thm:main}. Denote the set of the row indices of the matrix $\X_N$ as $\mathcal{P}$ and the set of column indices as $\mathcal{Q},$ so that $\X_N=\left(x_{i, j}\right)_{\substack{i\in \mathcal{P} \\ j\in \mathcal{Q}}}.$
Choose $\mathcal{P}_i \subseteq \mathcal{P}$ and $\mathcal{Q}_i \subseteq \mathcal{Q}$ for $i = 1,\dots, d,$ where $d$ is not changing with $N.$ 
Take submatrices $\X_N^{[i]}:=\left(\x_{i, j}\right)_{\substack{i\in \mathcal{P}_i \\ j\in \mathcal{Q}_i}}.$ 
Let $\mathbf{A}^{[i]}_N := \frac{\X^{[i]}_N \X^{[i]*}_N}{N}$ and
\begin{equation}
    \theta^{[i]}_N(z):=\frac{1}{N^{1-\alpha/4}}\left(\Tr\G_{\mathbf{A}^{[i]}_N}(z)-\mathbb{E}\Tr\G_{\mathbf{A}^{[i]}_N}(z)\right).
    \end{equation}
    Assume also the following asymptotic: 
    \begin{equation}
    \frac{|\mathcal{P}_i|}{N} \underset{N \rightarrow \infty}{\rightarrow} p_i>0, \text{  } \frac{|\mathcal{Q}_i|}{N} \underset{N \rightarrow \infty}{\rightarrow} q_i>0, \text{  }, \frac{|\mathcal{P}_i\cap\mathcal{P}_j||\mathcal{Q}_i\cap\mathcal{Q}_j|}{N^2} \underset{N \rightarrow \infty}{\rightarrow} \gamma_{ij}>0. 
\end{equation}
Then for any $z_1, z_2, \dots, z_d \in \mathbb{C}\backslash\mathbb{R}$ a vector $\left<\theta_N^{[1]}(z_1), \theta_N^{[2]}(z_2), \dots, \theta_N^{[d]}(z_d)\right>$ converges to a complex Gaussian vector in distribution when $N\to\infty$ and 
\begin{equation}
    \Cov\left[\theta_N^{[i]}(z), \theta_N^{[j]}(w)\right] \underset{N \rightarrow \infty}{\rightarrow} C_{i, j}(z, w), 
\end{equation}
where
\begin{equation}
C_{i, j}(z, w)=\int_{t, s>0} \frac{\partial}{\partial z} 
\frac{\partial}{\partial w}\mathcal{L}^{[i, j]}(z, t, w, s) \diff t \diff s,
\end{equation}
and
\begin{gather}
    \mathcal{L}^{[i, j]}(z, t, w, s):= \gamma_{i, j} \exp\left(\iunit \sgn_{\Im z} t z+\iunit \sgn_{\Im w} sw\right) \times \ell(z, t, w, s) \times r(z, t, w, s),\\
    \ell^{[i, j]}(z, t, w, s):= c \frac{\left(K^{[i]}(z, t)+K^{[j]}(w, s)\right)^{\alpha/2}-K^{[i]}(z, t)^{\alpha/2}-K^{[j]}(w, s)^{\alpha/2}}{ts},\\
    r^{[i, j]}(z, t, w, s):= \exp\left(-p_iK^{[i]}(z, t) -p_jK^{[j]}(w, s)\right),
\end{gather}
with the notation $K^{[i]}(z, t) = t \sgn_z \iunit z \frac{1}{q_i}m_{\frac{p_i}{q_i}}\left(\frac{z}{q_i}\right).$
\label{thm:theoremforoverlapping}
\end{theorem}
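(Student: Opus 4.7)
The plan is to mimic the proof of Theorem~\ref{thm:main} step by step, with the overlap parameters $p_i$, $q_i$, $\gamma_{ij}$ playing the role of $y$ in the single-matrix case. To obtain joint Gaussianity I would invoke the Cram\'er--Wold device on arbitrary linear combinations $\sum_i a_i \theta^{[i]}_N(z_i)$, reducing the problem to a one-dimensional CLT together with the computation of the asymptotic covariances $C_{ij}(z,w)$. The truncation of Subsection~\ref{sec:truncation} at level $N^{1/4+1/\alpha+\epsilon}$ is performed once on $\X_N$; since every $\X^{[i]}_N$ is a submatrix, this single truncation controls all $\mathbf{A}^{[i]}_N$ simultaneously, and the moment and characteristic function estimates of Subsection~\ref{sec:trunc_prop} are statements about individual entries and carry over verbatim.

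Next I would fix a common martingale filtration $\cF_k = \sigma(\x_1,\ldots,\x_k)$ obtained from ordering the columns of $\X_N$ once. For each $i$, the centred quantity $\hat\theta^{[i]}_N(z)$ is written as a sum of martingale differences indexed by $k\in \mathcal{Q}_i$ via the Schur complement identity applied to removing the $k$th column from $\mathbf{A}^{[i]}_N$; columns $k\notin \mathcal{Q}_i$ contribute zero. In the joint quadratic variation the cross-bracket between the $i$th and $j$th martingales sees only indices $k\in \mathcal{Q}_i\cap \mathcal{Q}_j$, and inside each such column only rows in $\mathcal{P}_i\cap\mathcal{P}_j$ produce a non-vanishing conditional expectation. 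Thus the combinatorial factor $|\mathcal{P}_i\cap\mathcal{P}_j|\,|\mathcal{Q}_i\cap\mathcal{Q}_j|/N^2 \to \gamma_{ij}$ emerges naturally, replacing $y=P/N$ in the prefactor of $\mathcal{L}$.

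Subsection~\ref{sec:diagonalisation}'s removal of off-diagonal resolvent entries is a local moment estimate applied independently to $\G_{\mathbf{A}^{[i]}_{N,k}}$ and $\G_{\mathbf{A}^{[j]}_{N,k}}$, and the constraint~\eqref{eqn:epsilontochoose} on $\epsilon$ is unchanged. After that step the diagonal entries of each resolvent converge almost surely to the Stieltjes transform of the limiting spectral law of $\mathbf{A}^{[i]}_N$; since $\mathbf{A}^{[i]}_N=\X^{[i]}_N\X^{[i]*}_N/N$ has the spectrum of $q_i$ times a standard sample covariance matrix with aspect ratio $p_i/q_i$, this limit is $\tfrac{1}{q_i}\,m_{p_i/q_i}(z/q_i)$, which is precisely the object inside $K^{[i]}(z,t)$. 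Inserting the characteristic-function approximation and applying Dominated Convergence as in Subsection~\ref{sec:computation_of_limit} then yields the claimed integral representation of $C_{ij}(z,w)$, with $y\mapsto p_i$ (resp.\ $p_j$) in the exponential factor $r^{[i,j]}$ and $y\mapsto \gamma_{ij}$ in the overall prefactor of $\mathcal{L}^{[i,j]}$.

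The main obstacle I anticipate is purely bookkeeping: one must verify tightness and the Lindeberg-type conditions for the joint vector of martingales when the summations run over $\mathcal{Q}_i$ rather than the full index set $\{1,\ldots,N\}$, and must correctly track which entries $x_{k\ell}$ can contribute a non-zero cross-covariance. Once this is in place, the remaining steps are essentially a tracked substitution of $y$ by its overlap analogues in the formulas already established in Section~\ref{sec:mainproof} and evaluated in Section~\ref{sec:covcalculation}.
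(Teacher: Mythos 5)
Your proposal follows essentially the same route as the paper: a single truncation of $\X_N$, a common column filtration with martingale differences vanishing for $k\notin\mathcal{Q}_i$, the cross-bracket restricted to $k\in\mathcal{Q}_i\cap\mathcal{Q}_j$ with the "covariance" numerator supported on rows in $\mathcal{P}_i\cap\mathcal{P}_j$ (yielding $\gamma_{ij}$) while the remaining rows feed the exponential factor with $p_i,p_j$, and the rescaling $\mathbf{A}^{[i]}_N = \frac{|\mathcal{Q}_i|}{N}\cdot\frac{\X^{[i]}_N\X^{[i]*}_N}{|\mathcal{Q}_i|}$ to identify the limit $\frac{1}{q_i}m_{p_i/q_i}(z/q_i)$ inside $K^{[i]}$. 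This matches the paper's argument in all essentials.
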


\section{Proof of Theorem~\ref{thm:main}}
\label{sec:mainproof}
We follow the same approach as in \cite{BGM16,MM22}. First, we replace 
the matrix $\X_N$ with a centred and truncated version matrix $\hat \X_N$ 
and show that the linear spectral statistic (LSS) $\hat\theta_N(z)$ with
$\hat\X_N$ replacing $\X_N$ has the same distributional limit as $\theta_N(z)$.
We then apply the Martingale Central Limit Theorem to $\hat{\theta}_N(z)$,
using estimates we develop for the entries of $\hat\X_N$.

\subsection{Truncation}\label{sec:truncation}

In this section, we truncate the entries of $\X_N$ by setting them to zero when they exceed $N^\beta$ and then centring them by their new mean $\mu_N$. We justify the truncation when $\beta > 0$ is bounded from below by a function of $\alpha.$ 

\begin{lemma}[Truncated and Centered Matrix]
\label{lem:trunc}
Let $\epsilon > 0$ and consider $\beta = \frac{1}{4} + \frac{1}{\alpha} + \epsilon$. With $\X_N$ and $\BA_N$ as in Definition~\ref{def:hlfhvywish}, 
and $P$ and $N$ are growing with respect to a parameter $D$ as in Proposition~\ref{prop:mplaw}, define the matrix $\hat\X_N$ whose entries are
\begin{align*}
\hat{x}_{i,j} &:= x_{i,j} \mathbf{1}_{|x_{i,j}| < N^\beta} - \mu_N, \\
\mu_N&:= \Exp\big[x_{i,j} \mathbf{1}_{|x_{i,j}|<N^\beta}\big].
\end{align*}
If we define $\hat\BA_N := \frac{\hat\X_N \hat \X_N^*}{N}$ and 
\[
 \hat\theta_N(z) := \frac{1}{N^{1-\frac{\alpha}{4}}}\big( \Tr \G_{\hat \BA_N}(z)  - \Exp[\Tr \G_{\hat \BA_N}(z)]\big),
\]
then 
\[
|\hat\theta_N(z) - \theta_N(z)| \to 0
\]
in probability as $D\to \infty$.
\end{lemma}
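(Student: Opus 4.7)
The plan is to decompose the replacement of $\X_N$ by $\hat\X_N$ into two stages and control each via a rank-perturbation bound for traces of resolvents. Set $\tilde x_{i,j} := x_{i,j}\mathbf{1}_{|x_{i,j}|<N^\beta}$, let $\tilde\X_N$ be the matrix with these entries, and write $\tilde\BA_N := \tilde\X_N\tilde\X_N^*/N$; then $\hat\X_N = \tilde\X_N - \mu_N J$, where $J$ denotes the $P\times N$ all-ones matrix. The key tool is the standard inequality
\[
|\Tr\G_A(z)-\Tr\G_B(z)|\;\leq\;\frac{2\,\rank(A-B)}{|\Im z|},
\]
valid for Hermitian $A,B$ and $z\in\C\setminus\R$, which follows from a rank-$k$ interlacing argument combined with the bound $|(z-\lambda)^{-1}|\leq|\Im z|^{-1}$.

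For the centering step I expand $\hat\X_N\hat\X_N^* = \tilde\X_N\tilde\X_N^* - \mu_N J\tilde\X_N^* - \mu_N\tilde\X_N J^* + \mu_N^2 JJ^*$; since $J$ has rank one, each correction has rank at most one, so $\rank(\hat\BA_N - \tilde\BA_N)\leq 3$. After division by $N^{1-\alpha/4}$ the resulting discrepancy is $O(N^{\alpha/4-1})\to 0$ deterministically (and similarly for its expectation) because $\alpha<4$.

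For the truncation step I use $\X_N\X_N^* - \tilde\X_N\tilde\X_N^* = (\X_N-\tilde\X_N)\X_N^* + \tilde\X_N(\X_N-\tilde\X_N)^*$ to bound $\rank(\BA_N-\tilde\BA_N)\leq 2\,\rank(\X_N-\tilde\X_N)\leq 2M_N$, where $M_N$ is the number of columns of $\X_N$ containing at least one entry with $|x_{i,j}|\geq N^\beta$. A union bound using Definition~\ref{def:heavytails} gives $\Exp M_N \leq PN\cdot\Prob(|x_{1,1}|\geq N^\beta)=O(N^{2-\alpha\beta})$; with $\beta=\tfrac14+\tfrac1\alpha+\epsilon$ one has $\alpha\beta = 1+\tfrac{\alpha}{4}+\alpha\epsilon$, so $\Exp M_N/N^{1-\alpha/4}=O(N^{-\alpha\epsilon})\to 0$. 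Combining the two steps, the renormalised trace difference and its expectation each tend to zero in $L^1$, hence $\hat\theta_N(z)-\theta_N(z)\to 0$ in probability.

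The argument is fairly routine once the two-stage decomposition is in place; the real content lies in the choice of $\beta$. The rank bound by itself only requires $\alpha\beta > 1+\alpha/4$, which would permit $\beta$ arbitrarily close to $\tfrac14+\tfrac1\alpha$; the particular value $\beta=\tfrac14+\tfrac1\alpha+\epsilon$ is dictated by the downstream analysis, in particular the removal of off-diagonal resolvent entries in Subsection~\ref{sec:diagonalisation} which needs $\epsilon$ small enough for \eqref{eqn:epsilontochoose} to hold. Thus the genuine obstacle is deferred to later sections and is not located inside this lemma; inside the proof of the lemma itself I would mainly have to be careful that the slowly varying $\ell(\cdot)$ in Definition~\ref{def:heavytails} does not spoil the polynomial bound $\Prob(|x_{1,1}|\geq N^\beta)=O(N^{-\alpha\beta})$, which follows from $\ell(t)\to 1$.
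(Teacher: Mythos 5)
Your proposal is correct and follows essentially the same route as the paper: a two-stage decomposition (truncation, then rank-one centering), a rank inequality converting the low-rank perturbation into a resolvent-trace bound, and a first-moment estimate on the number of affected rows/columns. The only cosmetic differences are that you count columns rather than rows, apply the rank bound at the level of $\BA_N$ rather than of $\X_N$, and conclude via an $L^1$/Markov argument where the paper also computes $\Var R$ and uses Chebyshev.
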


\begin{proof}
From the matrix $\X_N$ we build the matrix $\tilde\X_N$ whose entries are equal to
\begin{equation} 
\label{eqn:truncate}
\tilde{x}_{i,j} := x_{i,j} \mathbf{1}_{|x_{i,j}| < 
N^{\beta}}.
\end{equation}
For each $1\leq i\leq P$, set the random variable $\xi_i$ to equal 1 
if the $i$-th row of $\X_N$ differs from the $i$-th row of $\tilde\X_N$, and equal to 0 otherwise, i.e. 
\[
\xi_i := \mathbf{1}_{ \bigcup_{j=1}^N \{ x_{i,j} \neq \tilde{x}_{i,j} 
\}} = \mathbf{1}_{ \bigcup_{j=1}^N \big\{ |x_{i,j}| \geq N^{\beta} 
\big\}}.
\]
Since $R:=\sum_{i=1}^P \xi_i$ counts the total number of non-zero rows
of $\X_N-\tilde\X_N$, we have the following bound
\[
\rank(\X_N-\tilde\X_N) \leq R.
\]
The rank of $\Exp[\tilde\X_N]$ is at most 1, 
which means $\rank(\tilde\X_N - \hat\X_N) \leq 1$.
Using Corollary~\ref{cor:resolvrank} we have the bound
\begin{equation}
\label{eqn:restrunc}
\frac{1}{P^{1-\frac{\alpha}{4}}}\big|\Tr \G_{\hat\BA_N}(z) - 
\Tr \G_{\BA_N}(z)\big| \leq \frac{\pi \left(R+1\right)}{|\Im z| 
P^{1-\frac{\alpha}{4}}}.
\end{equation}

To prove the Lemma we combine bound \eqref{eqn:restrunc} with the proof of convergence of  $\frac{R}{P^{1-\frac{\alpha}{4}}}$ to 0 in probability. Firstly, we will estimate $\Exp R$ and $\Var R.$ Afterwards, we will apply Chebychev’s
inequality.  

Variables $\xi_i$ are i.i.d since the rows of $\mathbf{X}_N$ are i.i.d., which yields
$\Exp R  = P ~\Exp \xi_1$ 
and $\Var R  = P ~ \Var \xi_1.$ By the 
distributional assumption of Definition~\ref{def:heavytails}, there 
exists a constant $C>0$ depending only on $\alpha$, $\beta$ and $\ell$ 
such that for sufficiently large $N$ holds
\[
\Prob\Big( \big\{|x_{1,1}| \geq N^{\beta}\big\} \Big) \leq 
\frac{C}{N^{\alpha\beta}}.
\]
Therefore for sufficiently large $N,$ the following bound holds
\begin{align*}
\Exp[\xi_i] &= 1 - \bigg( 1 - \Prob\Big(\big\{|x_{1,1}| \geq  N^{\beta} 
\big\} \Big)\bigg)^N \\
&\leq 1 - \exp\bigg( N \log\Big[1 - \frac{C}{N^{\alpha\beta}}\Big]\bigg)
\leq 1 - \exp\big( -2C N^{1-\alpha\beta}\big),\\
&\leq 2 C N^{1-\alpha \beta}.
\end{align*}
In the second inequality we used that $C N^{-\alpha \beta} < 
\frac{1}{2}.$ In the third inequality we used that
$\beta > \frac{1}{\alpha}$ so that $CN^{1-\alpha\beta} < \frac{1}{2}.$ 
Since $\xi_i$ only takes values $0$ and $1,$ $\Var \xi_i^2 \leq \Exp 
[\xi_i^2] = \Exp[\xi_i.]$ Thus, the following bound also holds
\[
\Var \xi_i \leq 2 CN^{1-\alpha\beta}.
\]

The bounds on $\xi_i$ and the fact that $P/N \to y \in (0,\infty)$ yield the existence of a deterministic constant $\tilde{C} > 0$ for
which, when $P$ is sufficiently large,
\[
\Exp R \leq \tilde{C} P^{2-\alpha \beta} \quad\hbox{and}\quad 
\Var R \leq \tilde{C}P^{2-\alpha\beta}.
\] 
 We rewrite the upper bound of equation~\eqref{eqn:restrunc} as
\begin{equation}
\label{eqn:randrank}
\frac{R}{P^{1-\frac{\alpha}{4}}} = \frac{R-\Exp 
R}{P^{1-\frac{\alpha}{4}}} + \frac{\Exp R}{P^{1-\frac{\alpha}{4}}}.
\end{equation}
Assume further that  $P$ is sufficiently large. The second term in equation~\eqref{eqn:randrank}
\[
\frac{\Exp R}{P^{1-\frac{\alpha}{4}}} \leq \tilde{C} P^{1 - \alpha \beta + \frac{\alpha}{4}} \to 0
\]
since $\beta >\frac{1}{\alpha} + \frac{1}{4} $.
By Chebychev's inequality, for any $t>0$
\[
\Prob\Big(\big\{|R - \Exp R| \geq t P^{1-\frac{\alpha}{4}}\big\}\Big) \leq \frac{\Var R}{t^2 P^{2-\frac{\alpha}{2}}}\leq
\frac{\tilde{C} P^{\alpha( \frac{1}{2} - \beta)}}{t^2} \to 0,
\]
(since $\beta > \frac{1}{\alpha} + \frac{1}{4}$ and $2 < 
\alpha < 4$ it follows that $
\Big(\frac{1}{2} - \beta \Big)\alpha < 0$). 
Thus, the first term in equation~\eqref{eqn:randrank} converges in 
probability to $0$. The sum of both terms converge to $0$ in probability, which in combination with inequality \eqref{eqn:restrunc} yields the statement of the Lemma.
\end{proof}

\subsection{Properties of truncated elements}\label{sec:trunc_prop}

We will use of the following properties of the entries of 
$\hat{\X}_N$, analogous to those proven in 
\cite[Lemma 3.1]{BGM16}.
\begin{lemma}[Truncation Bounds]
\label{lem:truncbound}
With the same notation as in Lemma~\ref{lem:trunc}, the following 
bounds hold for some $\kappa > 0$ depending only on $\beta$, $\alpha$
and $\ell$.
\begin{enumerate}
\item $|\mu_N|\leq \kappa N^{\beta - \alpha}$.
\label{trunc:meanbound}
\item Defining $\sigma_N^2 := \Exp|\hat{x}_{i,j}|^2$, then we have 
$|\sigma_N^2 -1|\leq \kappa N^{2\beta(1-\alpha)}$.
\label{trunc:varbound}
\item $\Exp|\hat{x}_{i,j}|^3\leq \kappa N^{\beta(3-\alpha)_+}$   
\label{trunc:3rdmom}
\item $\Exp|\hat{x}_{i,j}|^4\leq \kappa N^{\beta(4-\alpha)}$
\label{trunc:4thmom}
\item 
\label{trunc:charfunc}
For any $\lambda\in \C$ such that $\Im\lambda\leq 0$,
\begin{equation}
\label{characteristicfunction}
\begin{split}
\phi_N(\lambda)&:= 
\Exp\bigg[\exp\bigg(-\iunit\left|\frac{\hat{x}_{i,j}}{\sqrt{N}}\right|^2\lambda\bigg)\bigg],\\
&=1-\frac{\iunit\lambda\sigma_N^2}{N}+c\frac{\left(\iunit\lambda\sigma_N^2\right)^{\alpha/2}}{N^{\alpha/2}}+\frac{|\lambda\sigma_N|^{\alpha/2}}{N^{\alpha/2}}\mathfrak{E}_N(\mathbf{i}\lambda\sigma_N^2/N),
\end{split}
\end{equation}
where the function $\mathfrak{E}_N(z)$ is analytic in $z$ on $\Re(z)>0,$ uniformly on $N$ bounded on any compact there and uniformly on $N$ tends to zero when $z\to 0.$ 
\end{enumerate}
\end{lemma}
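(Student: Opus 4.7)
The five bounds split into two groups: parts 1--4 are routine moment computations against the tail of $x_{1,1}$, and part 5 is the substantive heavy-tail expansion where the new ideas enter. Since the slowly varying factor $\ell$ tends to $1$ at infinity, all slowly varying corrections arising below are absorbed into the constant $\kappa$.

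My plan for parts 1--4 is the direct application of the layer-cake formula together with the tail bound $\Prob(|x_{1,1}|\geq t)\leq C\,t^{-\alpha}\ell(t)$ from Definition 2.1. Centering gives $\mu_N = -\Exp[x_{1,1}\mathbf{1}_{|x_{1,1}|\geq N^\beta}]$, so part 1 reduces to bounding $\int_{N^\beta}^\infty \Prob(|x_{1,1}|\geq t)\,dt$. Part 2 follows from $\sigma_N^2 = \Exp[\tilde x^2] - \mu_N^2$ together with $|1-\Exp[\tilde x^2]| = |\Exp[x_{1,1}^2\mathbf{1}_{|x_{1,1}|\geq N^\beta}]|$, estimated by the same tail integral. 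For parts 3 and 4, I use $\Exp|\tilde x|^p = p\int_0^{N^\beta} t^{p-1}\Prob(|x_{1,1}|\geq t)\,dt$ and transfer the bound to $\hat x$ via $|\hat x|^p \leq 2^{p-1}(|\tilde x|^p + |\mu_N|^p)$; when $p>\alpha$ the truncated integral concentrates at the upper endpoint $N^\beta$ and produces the stated exponent, while for $p<\alpha$ (the third-moment case when $\alpha>3$) it remains bounded.

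For part 5, set $\mu := -\iunit\lambda$ so that $\Re\mu\leq 0$, and use the representation
\[
\Exp[\exp(\mu\hat x^2/N)] = 1 + \int_0^\infty \frac{\mu}{N}\, e^{\mu y/N}\,\Prob(\hat x^2\geq y)\,dy,
\]
obtained by integrating by parts with respect to the distribution of $\hat x^2$; convergence is clear because $|e^{\mu y/N}|\leq 1$ and $\int_0^\infty\Prob(\hat x^2\geq y)\,dy=\sigma_N^2<\infty$. Peeling off the linear term and substituting $u=y/N$ yields
\[
\phi_N(\lambda) = 1 - \frac{\iunit\lambda\sigma_N^2}{N} + \int_0^\infty \mu\,(e^{\mu u}-1)\,\Prob(\hat x^2\geq Nu)\,du.
\]
The tail asymptotic $\Prob(|\hat x|\geq\sqrt{Nu}) \sim \frac{-c\,\ell(\sqrt{Nu})}{\Gamma(1-\alpha/2)(Nu)^{\alpha/2}}$ on the truncation range $u\leq N^{2\beta-1}$ exposes the $N^{-\alpha/2}$ scaling, and the classical identity
\[
\int_0^\infty (e^{-au}-1)\,u^{-\alpha/2}\,du = \Gamma(1-\alpha/2)\,a^{\alpha/2-1},\qquad \Re a>0,\ 2<\alpha<4,
\]
applied with $a=\iunit\lambda$, supplies the leading coefficient $c\,(\iunit\lambda\sigma_N^2)^{\alpha/2}/N^{\alpha/2}$ once the factor $\sigma_N^{\alpha/2}\to 1$ is restored.

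The main obstacle is the structural description of the remainder $\mathfrak{E}_N(\tau_N)$ with $\tau_N=\iunit\lambda\sigma_N^2/N$: it must be analytic on $\{\Re\tau_N>0\}$, uniformly bounded on compacts in $N$, and vanish as $\tau_N\to 0$. My plan is to define $\mathfrak{E}_N$ as the integral obtained after subtracting the identified leading terms; analyticity in $\mu$ (hence in $\tau_N$ via the affine change of variable) follows because $\mu\mapsto e^{\mu u}-1$ is entire and the dominating function
\[
|\mu|\,\min(|e^{\mu u}-1|,\,2)\,u^{-\alpha/2}\,\ell(\sqrt{Nu})\,\mathbf{1}_{u\leq N^{2\beta-1}}
\]
is integrable uniformly in $N$ on compacts in $\mu$, using $\ell\to 1$ and $2<\alpha<4$. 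The pointwise vanishing as $\tau_N\to 0$ then follows by dominated convergence, comparing the truncated integral with its $N\to\infty$ limit; the mild $N$-dependence of the domain in $\tau_N$ through $\sigma_N$ is handled by part 2.
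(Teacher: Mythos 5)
Your treatment of parts 1--4 is the same as the paper's: layer-cake integration against the tail bound of Definition~\ref{def:heavytails}, with the boundary term at $N^\beta$ and the transfer from $\tilde x$ to $\hat x$ handled explicitly (the paper in fact suppresses both of these, so your version is if anything slightly more careful). For part 5 the analytic core coincides --- integration by parts of the characteristic function against the tail $G$, regular variation, and the Gamma identity $\int_0^\infty(e^{-au}-1)u^{-\alpha/2}\,\diff u=\Gamma(1-\tfrac{\alpha}{2})a^{\alpha/2-1}$, whose evaluation of the leading coefficient $c(\iunit\lambda)^{\alpha/2}N^{-\alpha/2}$ you carry out correctly --- but the organization is genuinely different. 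The paper first strips the truncation and the recentering by two explicit comparison estimates ($\Delta_N^{(1)}$, bounded by $2\Prob(|x|>N^\beta)=O(N^{-\alpha\beta})$, and $\Delta_N^{(2)}$, bounded by $\tfrac{3|\lambda|}{N}o(1)$) and then applies the Bingham--Goldie--Teugels expansion to the \emph{untruncated} $|x|^2$, whose tail is exactly regularly varying; you instead keep $\hat x$ throughout and integrate the power-law tail only over the truncation window $u\leq N^{2\beta-1}$, using that the window grows since $\beta>\tfrac12$ for $\alpha<4$. Your route avoids the two auxiliary functions $\phi_N^{(1)},\phi_N^{(2)}$, at the price of one step you currently elide: $\hat x$ does \emph{not} itself satisfy the tail asymptotic of Definition~\ref{def:heavytails}, so the substitution $\Prob(\hat x^2\geq Nu)\approx\frac{-c\,\ell(\sqrt{Nu})}{\Gamma(1-\alpha/2)}(Nu)^{-\alpha/2}\mathbf{1}_{u\lesssim N^{2\beta-1}}$ must be justified by controlling the additive error $O(N^{-\alpha\beta})$ from the truncation event and the $O(\mu_N)$ shift; integrated against $|\mu|\min(2,|\mu|u)$ over the window this error is $O(|\mu|N^{(2-\alpha)(\beta-1/2)}N^{-\alpha/2})$, which is admissible, but it should be stated --- it is precisely the content of the paper's $\Delta_N^{(1)}$ and $\Delta_N^{(2)}$ estimates. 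With that step made explicit, your proposal establishes the lemma at the same level of rigour as the paper (both treatments are equally informal about the uniformity of $\mathfrak{E}_N(z)\to 0$ in the regime where $|\lambda|$ itself is small relative to powers of $N$).
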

\begin{proof}
We use Definition~\ref{def:heavytails} to obtain the bounds 
on the integrals and prove parts 1---4 of the Lemma.
Since $x_{i,j}$ is centred,
\[
|\mu_N| = \Big|\Exp\big[x_{i,j} \mathbf{1}_{|x_{i,j}|\geq 
N^\beta}\big]\Big| \leq \int_{N^\beta}^\infty 
\Prob\big(\big\{|x_{i,j}|>t\big\}\big)\diff t \leq  \kappa 
N^{\beta(1-\alpha)},
\]
which proves the first part of the Lemma. Since 
the variance of $x_{i,j}$ is $1$,
\[
\big|\sigma_N^2 - 1\big| = \Exp\big[x_{i,j}^2 \mathbf{1}_{|x_{i,j}|\geq 
N^\beta}\big] =  \int_{N^\beta} t\Prob\big(\big\{ |x_{i,j}| > 
t\big\}\big) \,\diff t \leq \kappa N^{2\beta(1-\alpha)},
\]
which concludes the second part. We obtain analogously the third and the fourth part. 

It is left to prove the part~\ref{trunc:charfunc}. Let $x$ have the same 
distribution as $x_{i,j}$. We define
\[
\phi_N^{(1)}(\lambda) := \Exp\bigg[ \exp\bigg(- \frac{ \iunit \lambda|x 
- \mu_N|^2 }{N} \bigg)\bigg].
\]
Next, we estimate the difference
\begin{multline}
\Delta_N^{(1)}(\lambda):=\phi_N^{(1)}(\lambda) - \phi_N(\lambda) \\
= \Exp\bigg[ \bigg\{\exp\bigg( -\frac{ \iunit \lambda|x - \mu_N|^2}{ 
N} \bigg) - \exp\bigg( -\frac{\iunit \lambda 
|\mu_N|^2}{N}\bigg)\bigg\}\mathbf{1}_{|x|> N^\beta}\bigg].
\end{multline}
Using that for any $t \in\R$, since $\Im(\lambda)\leq 0$,
$|\exp(-\iunit\lambda t)| \leq 1,$ we obtain the bound
\begin{equation}
\sup_{\lambda\in \C^-}|\Delta_N^{(1)}(\lambda)| \leq 2\Prob\big(\{|x|>
N^\beta\}\big) \leq \kappa N^{-\alpha \beta}. 
\end{equation}
Next, we consider the function
\[
\phi^{(2)}_N(\lambda) := \Exp\bigg[ \exp\bigg( -\frac{ \iunit \lambda 
|x|^2 }{ N} \bigg)\bigg],
\]
and estimate the difference
\begin{multline}
\Delta_N^{(2)}(\lambda) :=\phi^{(1)}_N(\lambda) - 
\phi^{(2)}_N(\lambda) \\ = \Exp\bigg[ 
\bigg\{\exp\bigg(-\frac{\iunit\lambda|x-\mu_N|^2}{N}\bigg) - 
\exp\bigg(-\frac{\iunit \lambda |x|^2 }{N} \bigg)  \bigg\}\bigg].
\end{multline}
Using the 
representation
\[
-\iunit \lambda \int_{p_1}^{p_2} \exp(-\iunit\lambda s)\,\diff s = \exp(-\iunit\lambda p_1) - 
\exp(-\iunit\lambda p_2), \qquad p_1,p_2 \in \R^+,
\]
we have the bound
\[
\bigg|\exp\bigg(-\frac{\iunit\lambda|x-\mu_N|^2}{N}\bigg) - 
\exp\bigg(-\frac{\iunit \lambda |x|^2 }{N} \bigg)  \bigg| \leq
\min\bigg( 2 , \frac{|\lambda|\big||x-\mu_N|^2 - |x|^2\big|}{N}\bigg).
\]
Hence,
\begin{multline}
\big|\Delta_N^{(2)}(\lambda) \big| \leq 2 
\Prob\Big(\big\{ |\lambda|\big||x-\mu_N|^2 - |x|^2\big| > 2N 
\big\}\Big)+ \frac{|\lambda|}{N} \Exp\Big[\big| |x-\mu_N|^2 - 
|x|^2\big|\Big]\\ \leq\frac{3|\lambda|}{N} \Exp\Big[\big| |x-\mu_N|^2 - 
|x|^2\big|\Big] = \frac{3|\lambda|}{N} o(1) .
\end{multline}

Now we expand $\phi_N^{(2)}(\lambda)$ using the same proof of 
\cite[Theorem 8.1.6]{bingham_regular_1989} (where $\lambda \in \R$) adapted for complex $\lambda$. We
do the expansion for $\Im z \leq 0$ and $|z| \to 0$, for the function
\[
\psi(z) = \Exp\big[ \exp(-iz|x|^2)\big]
\]
and will apply this expansion to $z = \frac{\lambda}{N}$ which makes
$\psi(z) = \phi_N^{(2)}(\lambda)$.

Let $\tilde{F}$ be the distribution function of $|x|^2$ and let $G = 1 
- \tilde{F}.$ By Definition~\ref{def:heavytails} we have
\[
G(u) = \Prob\big( \big\{ |x|^2 > u \big\}\big)\sim \frac{c\ell(\sqrt{u})
u^{-{\frac{\alpha}{2}}} }{ -\Gamma\left(1-\frac{\alpha}{2}\right)} \qquad u \to \infty.
\]
%what is a Stieltjes function
We may write
\[
\psi(z) = \int_0^\infty \exp\big[-\iunit z u \big] \diff \tilde{F}(u)
= - \int_0^\infty \exp \big[-\iunit z u \big] \diff G(u),
\]
so that
\[
1 - \psi(z) = \int_0^\infty \big( \exp[-\iunit z u] - 1) \diff G(u),
\]
We integrate by parts this representation:
\[
1 - \psi(z) = \iunit z \int_0^\infty \exp[- \iunit z u] G(u)\,\diff u.
\]
Recall that 
\[
\int_0^\infty G(u)\,\diff u = \Exp\big[|x|^2\big] = 1.
\]
Combining the equations above, we get
\[
1 -\iunit z - \psi(z) = \iunit z \int_0^\infty \big(\exp[- \iunit z u] 
- 1\big) G(u)\,\diff u.
\]
Denote
$\omega :=\frac{z}{|z|}= \exp[\iunit \arg(z)]$. We change variables $u\mapsto t /|z|$,
so that 

\[
1 -\iunit z - \psi(z) = \iunit \omega \int_0^\infty \big(\exp[- \iunit 
\omega t] - 1\big) G\bigg(\frac{t}{|z|}\bigg)\,\diff t,
\]
which implies
\[
\frac{1 -\iunit z - \psi(z)}{G\big(\frac{1}{|z|}\big)} = \iunit \omega 
\int_0^\infty \big(\exp[- \iunit \omega t] - 1\big) 
\frac{G\big(\frac{t}{|z|}\big)}{G\big(\frac{1}{|z|}\big)}\,\diff t.
\]
For $|z| \to 0$, the integral on the right-hand side
converges uniformly by $\omega$ to
% here you need to justify something that it is uniform over w or so 
\begin{equation}
\iunit \omega \int_0^\infty \frac{\big(\exp[- \iunit \omega t] - 
1\big)}{t^{\frac{\alpha}{2}}} \,\diff t.
\end{equation}
We evaluate this integral using the same method as in \cite[Lemma 
11]{LM22}
\begin{multline}
\iunit \omega \int_0^\infty \frac{\big(\exp[- \iunit \omega t] - 
1\big)}{t^{\frac{\alpha}{2}}} \,\diff t = \iunit \omega 
\int_0^\infty t^{-\frac{\alpha}{2}} \int_0^1 \frac{\diff}{\diff 
\nu} \exp(-\iunit \omega \nu t) \,\diff \nu \,\diff t,\\
=-(\iunit \omega)^2
\int_0^1 \int_0^\infty  t^{1-\frac{\alpha}{2}} \exp(-\iunit \omega \nu t) \,\diff t \,\diff \nu,\\
=-(\iunit \omega)^2 \int_0^1 \nu^{\frac{\alpha}{2}-2}\,\diff \nu 
\int_0^\infty s^{1-\frac{\alpha}{2}} \exp(-\iunit \omega s) \,\diff 
s ,\\
= -(\iunit \omega)^2 \frac{\Gamma\big(\frac{\alpha}{2} - 1 
\big)}{\Gamma\big(\frac{\alpha}{2}\big)} \int_0^\infty 
s^{1-\frac{\alpha}{2}}\exp(-\iunit \omega s)\,\diff s\\
= -(\iunit \omega)^2 \frac{\Gamma\big(\frac{\alpha}{2} - 1 
\big)\Gamma\big(2-\frac{\alpha}{2}\big)}{\Gamma\big(\frac{\alpha}{2}\big)} \big(-\iunit \overline{\omega}\big)^{1- \frac{\alpha}{2}} \big(-\iunit \overline{\omega}\big)\\
= -\big(\iunit \omega\big)^{\frac{\alpha}{2}} 
	\frac{\Gamma\big(\frac{\alpha}{2} - 1\big)\Gamma\big(2-\frac{\alpha}{2}\big)}{\Gamma\big(\frac{\alpha}{2}\big)} = (\iunit \omega)^{\frac{\alpha}{2}} \Gamma\bigg(1-\frac{\alpha}{2}\bigg).
\end{multline}
Recalling, that 
\[
	G\left(\frac{1}{|z|}\right)\sim-\frac{c}{\Gamma\left(1-\frac{\alpha}{2}\right)}|z|^{\frac{\alpha}{2}}
\]
we  conclude, that 
\[
\psi(z)=1-\iunit z +c \left(\iunit z\right)^{\alpha/2}+|z|^{\alpha/2}o(1).
\]
\end{proof}

In the next sections, to shorten the notations, we will write $\X_N$ instead of $\hat{\X}_N$ and $x_{i, j}$ instead of $\hat{x}_{i, j}.$
\subsection{Martingale decomposition}\label{sec:mart_decomposition}
To prove Theorem \ref{thm:main} we namely need to show that for any $z_1, z_2, \dots, z_k \in \mathbb{C}\backslash\mathbb{R}$ a vector $\left<\theta_N(z_1), \theta_N(z_2), \dots, \theta_N(z_k)\right>$ converges to a complex Gaussian vector in distribution with the proper covariance matrix. Using, that $\theta_N(\overline{z}) = \overline{\theta_N(z)}$ it is equal to the fact, that for any $k$ for any $z_1, z_2, \dots 
 z_k\in \C\backslash \R$ and $\alpha^{\Re}_1, \alpha_2^{\Re}, \dots \alpha_k^{\Re}, \alpha^{\Im}_1, \alpha_2^{\Im}, \dots \alpha_k^{\Im}\in \R$ the linear combination 
\begin{multline}
    \alpha^{\Re}_1\cdot\left(\theta_N(z_1)+\theta_N(\overline{z}_1)\right)+ \dots +  \alpha^{\Re}_k\cdot\left(\theta_N(z_k)+\theta_N(\overline{z}_k)\right)\\
    +\iunit\left(\alpha^{\Im}_1\cdot\left(\theta_N(z_1)-\theta_N(\overline{z}_1)\right)+ \dots +  \alpha^{\Im}_k\cdot\left(\theta_N(z_k)-\theta_N(\overline{z}_k)\right)\right) \label{eqn:linear_comb}
\end{multline}
converges in distribution to a Gaussian random variable whose variance agrees to the covariance kernel form Theorem \ref{thm:main}.
Define the filtration $\cF_{N,k}$ where $0 \leq k \leq N$ to be the $\sigma$-algebra generated
by the first $k$-columns of the matrix $\X_N$. Using this filtration, we will
apply the Martingale Central Limit Theorem (Lemma \ref{lemma:martingale_clt}) to the decomposition into the martingale difference sequence of the random variable \eqref{eqn:linear_comb}.

 To shorten the notations, we denote $\mathbb{E}_k:=\mathbb{E}\left[\cdot|\mathcal{F}_{N, k}\right].$
Consider the following array:
\begin{equation}
Y_k(z):=\frac{1}{N^{1-\alpha/4}}\left(\mathbb{E}_{k}-\mathbb{E}_{k-1}\right)\left[\Tr \frac{1}{z-\mathbf{A}_N}\right].
\end{equation}
The independence of $\mathbf{x}_k$ and $\mathbf{A}_{N, k}:= \mathbf{A}_N -\frac{1}{N}\mathbf{x}_k \mathbf{x}_k^*$ yields

$$\left(\mathbb{E}_{k}-\mathbb{E}_{k-1}\right)\left[\Tr \frac{1}{z-\mathbf{A}_{N, k}}\right]=0.$$ 
This way, we rewrite
\begin{multline}
Y_k(z)=\frac{1}{N^{1-\alpha/4}}\left(\mathbb{E}_{k}-\mathbb{E}_{k-1}\right)\left(\Tr \frac{1}{z-\mathbf{A}_N}\right)= \\
\frac{1}{N^{1-\alpha/4}}\left(\mathbb{E}_{k}-\mathbb{E}_{k-1}\right)\left(\Tr\left[ \frac{1}{z-\mathbf{A}_N}-\frac{1}{z-\mathbf{A}_{N, k}}\right]\right). 
\label{eqn:yk}
\end{multline}
Corollary \ref{lem:wishartdiff} yields that
\begin{equation}
    Y_k(z) \leq \frac{\pi}{N^{1-\alpha/4}|\Im z|}.
\end{equation}
Since the expression \eqref{eqn:linear_comb} is the linear combination of $Y_k(z)$ for different $z,$ the equation above provides us with the first condition of Lemma \ref{lemma:martingale_clt}.
We conclude, that Theorem \ref{thm:main} follows from the lemma below. 
 \begin{lemma}
 For all pairs $z, w \in \C\backslash \R$ 
 \begin{equation}
     \sum_{k=1}^N \Exp_{k-1}\left[ Y_k(z)Y_k(w)\right] \overset{\Prob}{\rightarrow} C(z, w).
 \end{equation} \label{lemma:cltcovneeded} 
 \end{lemma}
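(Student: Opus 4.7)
The plan follows the outline preceding the Lemma (Subsections~\ref{sec:diagonalisation}--\ref{sec:computation_of_limit}): rewrite each $Y_k(z)$ as a tractable functional of $\mathbf{x}_k$ and $\mathbf{A}_{N,k}$, reduce the resulting quadratic form in $\mathbf{x}_k$ to its diagonal approximation, identify the pointwise limit of $\Exp_{k-1}[Y_k(z) Y_k(w)]$ via the characteristic-function expansion of Lemma~\ref{lem:truncbound}(\ref{trunc:charfunc}), and then evaluate the $k$-sum. Applying the Sherman--Morrison identity to $\BA_N = \mathbf{A}_{N,k} + \frac{1}{N}\mathbf{x}_k\mathbf{x}_k^*$ gives the rank-one trace identity
\begin{equation*}
\Tr\!\left[\G_{\BA_N}(z) - \G_{\mathbf{A}_{N,k}}(z)\right] = \frac{d}{dz}\log\!\left(1 - \frac{1}{N}\mathbf{x}_k^*\, \G_{\mathbf{A}_{N,k}}(z)\,\mathbf{x}_k\right),
\end{equation*}
so that $Y_k(z) = \frac{1}{N^{1-\alpha/4}}\frac{d}{dz}(\Exp_k - \Exp_{k-1})\log\bigl(1 - \frac{1}{N}\mathbf{x}_k^* \G_{\mathbf{A}_{N,k}}(z) \mathbf{x}_k\bigr)$. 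Using the integral representation $\G_{\mathbf{A}_{N,k}}(z) = -\iunit\sgn_{\Im z}\int_0^\infty e^{\iunit\sgn_{\Im z}(z\Id - \mathbf{A}_{N,k})\,t}\,\diff t$, the inner quadratic form becomes a superposition in $t>0$ of $\frac{1}{N}\mathbf{x}_k^* M_{N,k}(z,t)\mathbf{x}_k$, with $M_{N,k}(z,t) := e^{-\iunit\sgn_{\Im z}\, t \,\mathbf{A}_{N,k}}$, an object directly accessible to Lemma~\ref{lem:truncbound}.

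Next, I would diagonalize. Splitting
\begin{equation*}
\mathbf{x}_k^* M_{N,k}(z,t)\mathbf{x}_k = \sum_i |x_{i,k}|^2 [M_{N,k}(z,t)]_{ii} + \sum_{i\neq j}\bar{x}_{i,k}\,[M_{N,k}(z,t)]_{ij}\,x_{j,k},
\end{equation*}
one uses the fourth-moment bound $\Exp|\hat{x}_{i,j}|^4\leq \kappa N^{\beta(4-\alpha)}$ of Lemma~\ref{lem:truncbound}(\ref{trunc:4thmom}) together with unitarity of $M_{N,k}$ to show, by a second-moment computation, that the off-diagonal contribution is negligible relative to the $N^{1-\alpha/4}$ normalisation. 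This step needs $\alpha<4$ and requires $\epsilon$ in $\beta=\tfrac14+\tfrac1\alpha+\epsilon$ small enough that inequality~\eqref{eqn:epsilontochoose} holds. Once the off-diagonal piece is discarded, conditioning on $\mathbf{A}_{N,k}$ (independent of $\mathbf{x}_k$) reduces the conditional expectation of the product $\exp(-\iunit\sgn_{\Im z}t\cdot\tfrac{1}{N}\mathbf{x}_k^*M_{N,k}(z,t)\mathbf{x}_k)\exp(-\iunit\sgn_{\Im w}s\cdot\tfrac{1}{N}\mathbf{x}_k^*M_{N,k}(w,s)\mathbf{x}_k)$ to a product over $i$ of $\phi_N$'s evaluated at $\sgn_{\Im z} t [M_{N,k}(z,t)]_{ii} + \sgn_{\Im w} s [M_{N,k}(w,s)]_{ii}$. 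Expanding each factor to order $N^{-\alpha/2}$ via Lemma~\ref{lem:truncbound}(\ref{trunc:charfunc}), taking logarithms and summing over $i$, the difference between the joint expectation and the product of its marginals concentrates around the cross term carrying $c\bigl[(K(z,t)+K(w,s))^{\alpha/2} - K(z,t)^{\alpha/2} - K(w,s)^{\alpha/2}\bigr]$; reinserting into $\Exp_{k-1}[Y_k(z)Y_k(w)]$, differentiating in $z,w$, and using Proposition~\ref{prop:mplaw} to convert $\tfrac{1}{P}\Tr M_{N,k}(z,t)$ into the deterministic limit encoded by $K(z,t)=\iunit t\sgn_{\Im z}\,z\, m_y(z)$, one recovers the integrand $\tfrac{\partial}{\partial z}\tfrac{\partial}{\partial w}\mathcal{L}(z,t,w,s)$.

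For the final step I would observe that $\mathbf{A}_{N,k}$ has the same distributional limit for every $k$ (removing one column does not affect the ratio $P/N\to y$), so the $k$-summand is asymptotically $k$-independent and $\sum_{k=1}^N \Exp_{k-1}[Y_k(z) Y_k(w)]$ reduces to $N$ copies of the pointwise limit in $(t,s)$, which combined with the $N^{-(2-\alpha/2)}$ scaling yields $C(z,w)$. Exchange of the $(t,s)$-integral with the $N\to\infty$ limit is justified by dominated convergence using $|\phi_N(\lambda)|\leq 1$ on $\{\Im\lambda\leq 0\}$ together with the decay at infinity coming from $\Re(\iunit\sgn_{\Im z}tz)<0$ and the explicit factor $r(z,t,w,s)$. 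Upgrading from convergence in expectation to convergence in probability requires a variance estimate for the sum over $k$, exploiting that each $Y_k(z) Y_k(w)$ is uniformly of order $N^{-(2-\alpha/2)}$. The main obstacle is the diagonalization step: the exponent $\alpha<4$ leaves only a narrow window in which the off-diagonal variance can be shown to be subleading to $N^{1-\alpha/4}$, and closing this window requires precise moment bookkeeping with the tuned truncation parameter $\beta=\tfrac14+\tfrac1\alpha+\epsilon$---which is precisely why inequality~\eqref{eqn:epsilontochoose} enters in an essential way.
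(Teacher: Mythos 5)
Your overall strategy (rank-one identity, diagonalization subject to \eqref{eqn:epsilontochoose}, characteristic-function expansion via Lemma~\ref{lem:truncbound}, dominated convergence, averaging over $k$) is the paper's, but the central step is broken. The paper writes $Y_k(z)=\frac{\partial}{\partial z}N^{\alpha/4-1}(\Exp_k-\Exp_{k-1})\log|g_{N,k}(z)|^2$ with the \emph{scalar} $g_{N,k}(z)=z-\frac1N\x_k^*z\G_{N,k}(z)\x_k$, and the exponential whose $\x_k$-expectation factorizes into $\prod_i\phi_N(\cdot)$ arises from applying the representation $\frac{1}{\tilde g}=-\iunit\sgn_{\Im z}\int_0^\infty \exp(\sgn_{\Im z}\iunit t\tilde g)\,\diff t$ to this scalar (legitimate because $\Im(\sgn_{\Im z}\tilde g_{N,k})<0$ by Lemma~\ref{lemma:imsgn}); the arguments of $\phi_N$ are then $tu_i$ with $u_i=\sgn_{\Im z}z\G_{N,k}(z)_{ii}\to zm_y(z)$, which is exactly where $K(z,t)=\iunit t\sgn_z zm_y(z)$ comes from. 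You instead apply the exponential representation to the matrix resolvent, producing $M_{N,k}(z,t)=e^{-\iunit\sgn_{\Im z}tA_{N,k}}$. This is the wrong object on two counts. First, nothing in your derivation produces $\exp\bigl(-\iunit\sgn_{\Im z}t\cdot\frac1N\x_k^*M_{N,k}\x_k\bigr)$: in your setup the quadratic form $\frac1N\x_k^*M_{N,k}\x_k$ enters \emph{linearly} under a $t$-integral inside $\log(1-\cdot)$, and you never explain how an exponential of it appears --- yet that exponential is precisely what makes $\Exp_{\x_k}$ factorize into $\phi_N$'s. Second, even granting that exponential, $[M_{N,k}(z,t)]_{ii}$ and $\frac1P\Tr M_{N,k}(z,t)$ converge to $\int e^{-\iunit\sgn_{\Im z}t\lambda}\,\diff\mu_{\mathrm{MP},y}(\lambda)$, a Fourier transform of the Marchenko--Pastur law, not to anything involving $m_y(z)$; carried out literally your program would not produce $K(z,t)=\iunit t\sgn_z zm_y(z)$ and hence not $C(z,w)$.

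A second, smaller gap: $\Exp_{k-1}[Y_k(z)Y_k(w)]$ involves $\Exp_{k-1}\bigl[(\Exp_k[a])(\Exp_k[b])\bigr]-\Exp_{k-1}[a]\Exp_{k-1}[b]$ and is not directly a conditional covariance over $\x_k$ of two functions of the same matrix; the paper needs the independent-copy device (Lemma~\ref{lemma:tildelemma}) to write $\Cov_{k-1}[a,b]=\Exp_k\bigl(\Cov_{\x_k}[a,\underline{b}]\bigr)$, so that the $w$-factor carries the resolvent of an independent copy $\underline{\G_{N,k}}(w)$. Your phrase ``conditioning on $\mathbf{A}_{N,k}$'' elides this, and the key difference $\prod_i\phi_N(tu_i+sv_i)-\prod_i\phi_N(tu_i)\phi_N(sv_i)$ does not come out without it.
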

 
Denote $\mathbf{G}_{N, k}(z):=\left(z-\mathbf{A}_{N, k}\right)^{-1}.$ We rewrite the right-hand side of equation \eqref{eqn:yk} as in \cite[p.54]{BS10}:  
\begin{equation}
Y_k(z) = \frac{1}{N^{1-\alpha/4}} \left(\mathbb{E}_{k}-\mathbb{E}_{k-1}\right) \frac{ \frac{1}{N}\mathbf{x}_k^* \mathbf{G}_{N, k}(z) ^{2} \mathbf{x}_k}{1- \frac{1}{N}\mathbf{x}_k^* \mathbf{G}_{N, k}(z) \mathbf{x}_k}.
\label{martingale}
\end{equation}

From the equation above, 

\begin{multline}
Y_k(z) = \frac{1}{N^{1-\alpha/4}} \left(\mathbb{E}_{k}-\mathbb{E}_{k-1}\right)\left[ \frac{ -\frac{1}{N}\mathbf{x}_k^* \mathbf{G}_{N, k}(z) ^{2} \mathbf{x}_k}{1- \frac{1}{N}\mathbf{x}_k^* \mathbf{G}_{N, k}(z) \mathbf{x}_k}+\frac{1}{z}\right] \\ = \frac{1}{N^{1-\alpha/4}} \left(\mathbb{E}_{k}-\mathbb{E}_{k-1}\right) \frac{\frac{\partial}{\partial z} \left(z- \frac{1}{N}\mathbf{x}_k^* z \mathbf{G}_{N, k}(z) \mathbf{x}_k\right)}{z- \frac{1}{N}\mathbf{x}_k^* z \mathbf{G}_{N, k}(z) \mathbf{x}_k}.
\label{eqn:Y}
\end{multline}

For short,  denote $g_{N, k}(z):= z- \frac{1}{N}\mathbf{x}_k^* z \mathbf{G}_{N, k}(z) \mathbf{x}_k.$ Then, using this notation we futher rewrite \eqref{eqn:Y} as

\begin{multline}
Y_k(z) = \frac{1}{N^{1-\alpha/4}} \left(\mathbb{E}_{k}-\mathbb{E}_{k-1}\right) \frac{\frac{\partial}{\partial z} g_{N, k}(z)}{g_{N, k}(z)}\\=\frac{1}{N^{1-\alpha/4}} \left(\mathbb{E}_{k}-\mathbb{E}_{k-1}\right) \frac{\partial}{\partial z}\log\left|g_{N, k}(z)\right|^2.
\end{multline}
We swap the derivative and the expectation in the equation above to get
\begin{equation}
Y_k(z)= \frac{\partial}{\partial z} \frac{1}{N^{1-\alpha/4}} \left(\mathbb{E}_{k}-\mathbb{E}_{k-1}\right) \log\left|g_{N, k}(z)\right|^2,
\label{eqn:derswapg}
\end{equation}
and justify the swap the following way.
Application of Lemma \ref{lemma:diagest} and Lemma \ref{lemma:imsgn} to the definition of $g_{N, k}(z)$ yields
\begin{equation}
|\Im z| \leq |{g}_{N, k}(z)| \leq |z|+ \frac{|z|}{|\Im z|} \frac{1}{N}\|\mathbf{x}_k\|^2_2. 
\label{eqn:gest}
\end{equation} 
Thus, 
\begin{equation}
2\log|\Im z| \leq \log\left|g_{N, k}(z)\right|^2\leq 2\log\left( |z|+ \frac{|z|}{|\Im z|} \frac{1}{N}\|\mathbf{x}_k\|^2_2\right).
\end{equation}
The function $\log\left|g_{N, k}(z)\right|^2$ is harmonic as it is a real part of the holomorphic function. Thus, a combination of Lemma \ref{lemma:poissonkernel} and Lemma \ref{lemma:derivative_swap} justifies the swap. 

\subsection{Diagonalization}\label{sec:diagonalisation}

Further $\operatorname{diag}\left[\mathbf{G}_{N, k} \right]$ will denote a $P\times P$ matrix, whose diagonal entries match those of the matrix $\mathbf{G}_{N, k}$, and off-diagonal entries are equal to $0.$

Let us denote 

\begin{equation}
\Tilde{g}_{N, k}(z):=z- \frac{1}{N}\mathbf{x}_k^* z \operatorname{diag}\left[\mathbf{G}_{N, k}(z)\right] \mathbf{x}_k.
\end{equation}

Define
\begin{equation}
\tilde{Y}_k(z): = \frac{1}{N^{1-\alpha/4}} \left(\mathbb{E}_{k}-\mathbb{E}_{k-1}\right) \frac{\frac{\partial}{\partial z} \Tilde{g}_{N, k}(z)}{\Tilde{g}_{N, k}(z)}.
\label{eqn:ytilde}
\end{equation}

Similarly equation\eqref{eqn:derswapg}, equation \eqref{eqn:ytilde} can be rewritten as

\begin{equation}
\Tilde{Y}_k(z)= \frac{\partial}{\partial z} \frac{1}{N^{1-\alpha/4}} \left(\mathbb{E}_{k}-\mathbb{E}_{k-1}\right) \log\left|\Tilde{g}_{N, k}(z)\right|^2.
\end{equation}

The main purpose of the further computations in this section will be to justify the replacement of $Y$ with $\tilde{Y}$ in Lemma \ref{lemma:cltcovneeded}.
    
\begin{lemma}
\label{lemma:diagonalisation}
For $0<t<y$ and $\Tilde{Y}_k$ defined as above the following there exist $C, \delta>0$ such that:
$$
\left|\sum_{k=1}^{N} \mathbb{E}_{k-1}\left[Y_{k}(z) Y_{k}(w)-\tilde{Y}_{k}(z) \tilde{Y}_{k}(w)\right]\right|\leq C N^{-\delta}\frac{|z||w|}{|\Im z|^3|\Im w|^3}.
$$
\end{lemma}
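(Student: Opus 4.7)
The target is a two-step Cauchy--Schwarz reduction. Writing
\[
Y_k(z)Y_k(w) - \tilde Y_k(z)\tilde Y_k(w) = \bigl(Y_k(z) - \tilde Y_k(z)\bigr) Y_k(w) + \tilde Y_k(z)\bigl(Y_k(w) - \tilde Y_k(w)\bigr),
\]
applying $|\mathbb{E}_{k-1}[X W]|\leq \mathbb{E}_{k-1}|X||W|$, and then Cauchy--Schwarz (pointwise inside the conditional expectation, followed by Cauchy--Schwarz in the $\ell^2_k$--$L^2$ sense) reduces the claim to two estimates: a decay bound $\sum_k \mathbb{E}|Y_k(z) - \tilde Y_k(z)|^2 \leq C N^{-2\delta}|z|^2/|\Im z|^6$, and an a priori bound $\sum_k \mathbb{E}|Y_k(w)|^2 \leq C |w|^2/|\Im w|^6$, together with the $(z,w)$-symmetric partners.

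\textbf{Moment estimate for the off-diagonal residue.}
Define
\[
d_k(z) := g_{N,k}(z) - \tilde g_{N,k}(z) = -\frac{z}{N}\sum_{i\neq j}\overline{x_{k,i}}\,(G_{N,k}(z))_{ij}\,x_{k,j}.
\]
Since $G_{N,k}$ is independent of $\mathbf{x}_k$, conditioning on $G_{N,k}$ and using that the truncated entries are centred with variance $\sigma_N^2 = 1+o(1)$ (Lemma~\ref{lem:truncbound}) gives
\[
\mathbb{E}\bigl[|d_k(z)|^2 \mid G_{N,k}\bigr] \leq \frac{C|z|^2\sigma_N^4}{N^2}\sum_{i\neq j}\bigl|(G_{N,k}(z))_{ij}\bigr|^2 \leq \frac{C|z|^2\,\Tr(G_{N,k}G_{N,k}^*)}{N^2} \leq \frac{C|z|^2 P}{N^2|\Im z|^2}.
\]
Because $\partial_z G = -G^2$, the same computation applied to $d'_k(z)$ gains one extra factor $|\Im z|^{-1}$, yielding $\mathbb{E}|d'_k(z)|^2 \lesssim |z|^2/(N|\Im z|^4)$.

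\textbf{Transfer to $Y_k - \tilde Y_k$.}
Use the logarithmic-derivative identity
\[
\frac{g'_{N,k}}{g_{N,k}} - \frac{\tilde g'_{N,k}}{\tilde g_{N,k}} = \frac{d'_k\,\tilde g_{N,k} - d_k\,\tilde g'_{N,k}}{g_{N,k}\,\tilde g_{N,k}},
\]
the lower bounds $|g_{N,k}|,|\tilde g_{N,k}|\geq |\Im z|$ from \eqref{eqn:gest}, fourth-moment bounds on $\tilde g_{N,k}$ and $\tilde g'_{N,k}$ (which, after expanding $\tilde g_{N,k} = z(1 - \tfrac1N\sum_i |x_{k,i}|^2 G^{(k)}_{ii})$, reduce to controlling $\mathbb{E}|x_{k,i}|^4 \lesssim N^{\beta(4-\alpha)}$ from Lemma~\ref{lem:truncbound}), H\"older's inequality to split the joint factors, and the $L^2$-contractivity of the martingale increment $\mathbb{E}_k - \mathbb{E}_{k-1}$. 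These combine to
\[
\sum_{k=1}^N \mathbb{E}|Y_k(z) - \tilde Y_k(z)|^2 \lesssim \frac{1}{N^{2-\alpha/2}}\cdot \frac{|z|^2\, p(|z|,|\Im z|^{-1})}{|\Im z|^6},
\]
with $p$ a polynomial; this is $O(N^{-\delta}|z|^2/|\Im z|^6)$ precisely because $\alpha<4$.

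\textbf{Main obstacle.}
The a priori bound on $\sum_k \mathbb{E}|Y_k(w)|^2$ cannot be extracted from the pointwise estimate $|Y_k(w)|\leq \pi/(N^{1-\alpha/4}|\Im w|)$ (whose square summed over $k$ diverges as $N^{\alpha/2-1}$). Instead, one bounds $\sum_k \mathbb{E}|\tilde Y_k(w)|^2$ directly, exploiting the i.i.d.\ structure of $\tilde g_{N,k}(w) - w = -\tfrac{w}{N}\sum_i |x_{k,i}|^2 G^{(k)}_{ii}$ and the characteristic-function expansion of Lemma~\ref{lem:truncbound} part~\ref{trunc:charfunc}---essentially a sibling computation to the one carried out in Subsection~\ref{sec:computation_of_limit}---after which the Transfer step and the triangle inequality control $Y_k(w)$. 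The technical heart of the argument is the moment bookkeeping: the fourth-moment blow-up $N^{\beta(4-\alpha)}$ from the heavy-tailed entries must not overwhelm the normalisation $N^{1-\alpha/4}$, which is the role of the epsilon constraint~\eqref{eqn:epsilontochoose} flagged in the paper's introduction.
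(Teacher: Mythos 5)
Your decomposition $Y_k(z)Y_k(w)-\tilde Y_k(z)\tilde Y_k(w)=(Y_k-\tilde Y_k)(z)Y_k(w)+\tilde Y_k(z)(Y_k-\tilde Y_k)(w)$ followed by Cauchy--Schwarz is a genuinely different route from the paper's, and it has two gaps, one of which you have correctly flagged but not closed. The paper never needs an a priori bound on $\sum_k\Exp|Y_k(w)|^2$. Instead it works with $F_k=\log|g_{N,k}|^2$, $\tilde F_k=\log|\tilde g_{N,k}|^2$ and the operator $\Cov_{k-1}$, and subtracts the $\x_k$-independent quantity $\hat F_k=\log|\hat g_{N,k}|^2$; since $\Cov_{k-1}[a+\xi,b]=\Cov_{k-1}[a,b]$ for $\xi$ independent of $\x_k$, the dangerous term $\Cov_{k-1}[\psi_k(z),\psi_k(w)]$ (which is \emph{not} small --- it is precisely what converges to $C(z,w)$) cancels exactly, and every surviving term $T_1,T_2,T_3$ carries at least one factor of the off-diagonal residue $\epsilon_k$, controlled via $|\epsilon_k(z)|\le 2|\eta_k(z)|/|\Im z|$ and the Ward identity. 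Your route replaces this algebraic cancellation by the bound $\sum_k\Exp|\tilde Y_k(w)|^2=O(1)$, which is essentially the statement that the limiting variance $C(w,\bar w)$ exists and is finite --- i.e.\ the content of the entire Subsection on the computation of the limit. Calling it a ``sibling computation'' does not close the gap; your proof of the diagonalization lemma would have to front-load the hardest part of the paper, and as written that part is simply missing.

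The second gap is in your ``Transfer'' step. Writing $Y_k-\tilde Y_k$ through $\frac{d_k'\tilde g_{N,k}-d_k\tilde g_{N,k}'}{g_{N,k}\tilde g_{N,k}}$ forces you to take second moments of \emph{products} of two quadratic forms in $\x_k$, hence (via H\"older) fourth moments of quadratic forms, hence $\Exp|\hat x_{i,j}|^4\lesssim N^{\beta(4-\alpha)}$. With $\beta=\frac14+\frac1\alpha+\epsilon$ one has $\beta(4-\alpha)=\frac4\alpha-\frac\alpha4+\epsilon_0$, which exceeds $1$ for $\alpha$ close to $2$, so e.g.\ $\Exp|\tilde g_{N,k}'|^4$ is not uniformly bounded and the claimed estimate $\sum_k\Exp|Y_k-\tilde Y_k|^2\lesssim N^{\alpha/2-2}\cdot(\cdots)$ does not follow from the bookkeeping you sketch. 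The paper avoids this entirely: it bounds $\epsilon_k$ and $\psi_k$ by $2|\eta_k|/|\Im z|$ and $2|E_k|/|\Im z|$ (only \emph{second} moments are ever needed, see Lemma~\ref{lemma:eta}; the possibly divergent factor $\Exp_{k-1}|E_k|^2\sim N^{4/\alpha-\alpha/4-1+\epsilon_0}$ is always paired with $\sqrt{\Exp_{k-1}|\eta_k|^2}\sim N^{-1/2}$, which is where \eqref{eqn:epsilontochoose} enters), and the $z$- and $w$-derivatives are taken \emph{afterwards} by the harmonic-function gradient estimate of Corollary~\ref{col:derest} rather than by differentiating the quadratic forms. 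I would recommend adopting that structure: it is what makes the lemma provable with only the moment information available at this stage.
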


Define the operator $\Cov_{k-1}[\cdot, \cdot]$ by 
\begin{multline}
    \Cov_{k-1}[a, b]:=\Exp_{k-1}\Bigl[\left(\Exp_k[a]-\Exp_{k-1}[a]\right)\left(\Exp_k[b]-\Exp_{k-1}[b]\right)\Bigr]\\=\Exp_{k-1}\bigl[\Exp_k\left[a\right]\Exp_k\left[b\right]\bigr]-\Exp_{k-1}\bigl[a\bigr]\Exp_{k-1}\bigl[b\bigr].
\end{multline}
This notation will simplify some computations because for random variable $\xi,$ that is independent of $\x_k$
\begin{equation}
    \Cov_{k-1}[a+\xi, b] = \Cov_{k-1}[a, b].
\end{equation}
Denote as $F_k(z):=\log|g_{N, k}(z)|^2$ and $\Tilde{F}_k(z):=\log |\Tilde{g}_{N, k}(z)|^2.$ 
As it was done previously to justify \eqref{eqn:derswapg}, we justify the swap of derivative and expectation once again to get
\begin{equation}
    \mathbb{E}_{k-1}\left[Y_{k}(z) Y_{k}(w)\right]=N^{\alpha/2-2}\frac{\partial}{\partial z} \frac{\partial}{\partial w} \Cov_{k-1}\left[F_k(z), F_k(w)\right]
\end{equation}
and
\begin{equation}
    \mathbb{E}_{k-1}\left[\tilde{Y}_{k}(z) \tilde{Y}_{k}(w)\right]=N^{\alpha/2-2}\frac{\partial}{\partial z} \frac{\partial}{\partial w}\Cov_{k-1}\left[\tilde{F}_k(z), \tilde{F}_k(w)\right].
\end{equation}
Using Corollary \ref{col:derest} is is easy to see that statement of the Lemma \ref{lemma:diagonalisation} would follow from the existence of $C, \delta>0,$ independent of $k$ and $N$ such that

\begin{multline}
N^{\alpha/2-2}\bigg|\Cov_{k-1}\Bigl[F_k(z), F_k(w)\Bigr]-\Cov_{k-1}\Bigl[\Tilde{F}_k(z),\Tilde{F}_k(w)\Bigr]\bigg|\\ \leq \frac{CN^{-\delta}}{N}\frac{|z||w|}{|\Im z|^2|\Im w|^2}.
\end{multline}

Define also functions $\hat{g}_{N, k}(z):=z-\frac{1}{N}z\Tr \mathbf{G}_{N, k}(z)$ and $\Hat{F}_k(z):=\log|\Hat{g}_{N, k}(z)|^2.$ Notice, that they are independent of the $k$-th column. Denote $\epsilon_k(z):=F_k(z)-\tilde{F_k}(z)$ and $\psi_k(z):=\Tilde{F}_k(z)-\Hat{F}_k(z).$ Using that $\Exp_k\Hat{F}_k(z)\in \mathcal{F}_{N, k}$  
we see, that
\begin{multline}
    \Cov_{k-1}\Bigl[F_k(z), F_k(w)\Bigr] = \Cov_{k-1}\Bigl[\Hat{F}(z)+\psi_k(z)+\epsilon_k(z), \Hat{F}(w)+\psi_k(w)+\epsilon_k(w)\Bigr]\\ =\Cov_{k-1}\Bigl[\psi_k(z)+\epsilon_k(z), \psi_k(w)+\epsilon_k(w)\Bigr]
\end{multline}
and
\begin{multline}
\Cov_{k-1}\left[\Tilde{F}_k(z),\Tilde{F}_k(w)\right] = \Cov_{k-1}\left[\Hat{F}(z)+\psi_k(z), \Hat{F}(w)+\psi_k(w)\right]\\= \Cov_{k-1}\Bigl[\psi_k(z), \psi_k(w)\Bigr].
\end{multline}
Thus
\begin{multline}
    \Cov_{k-1}\left[F_k(z), F_k(w)\right]-\Cov_{k-1}\left[\Tilde{F}_k(z),\Tilde{F}_k(w)\right]\\=\Cov_{k-1}\left[\psi_k(z) \epsilon_k(w)\right]+\Cov_{k-1}\left[\epsilon_k(z), \psi_k(w)\right]+\Cov_{k-1}\left[\epsilon_k(z), \epsilon_k(w)\right]
    \\=: T_1+T_2+T_3.
\end{multline}
This way, Lemma \ref{lemma:diagonalisation} follows from the lemma below: 
\begin{lemma}
Consider $T_1, T_2, T_3$ defined above. 
There exists $C, \delta>0,$ independent of $k, N$ such that
$$N^{\alpha/2-1}|T_i|<O(1) N^{-\delta}\frac{|z||w|}{|\Im z|^2 |\Im w|^2}$$
for all $z, w\in \C \backslash \R$ and all $i\in\left\{1, 2, 3\right\}.$
\label{lemma:T}
\end{lemma}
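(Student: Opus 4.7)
My plan is to reduce the three covariance estimates to second-moment bounds on $\epsilon_k$ and $\psi_k$ via Cauchy--Schwarz. For any $\mathcal{F}_{N,k}$-measurable $a,b$,
\[
|\Cov_{k-1}[a,b]| \leq \bigl(\Exp_{k-1}|\Exp_k a-\Exp_{k-1}a|^2\bigr)^{1/2}\bigl(\Exp_{k-1}|\Exp_k b-\Exp_{k-1}b|^2\bigr)^{1/2},
\]
and each of these factors is bounded by the corresponding conditional variance, hence by $\Exp_{k-1}|\cdot|^2$. Thus it suffices to prove $\Exp_{k-1}|\epsilon_k(z)|^2 \leq C|z|^2 N^{-1}|\Im z|^{-4}$ and an analogous estimate for $\psi_k$ whose $N$-power combines with the $\epsilon$-bound to yield an overall exponent strictly less than $1-\alpha/2$, after multiplication by $N^{\alpha/2-1}$.

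For $\epsilon_k(z) = \log|g_{N,k}(z)|^2-\log|\tilde g_{N,k}(z)|^2$, the mean value theorem on $\log|\cdot|$, combined with the deterministic lower bound $|g_{N,k}(z)|,|\tilde g_{N,k}(z)|\geq |\Im z|$ from \eqref{eqn:gest}, gives $|\epsilon_k(z)|\leq 2|g_{N,k}(z)-\tilde g_{N,k}(z)|/|\Im z|$. The difference equals $-\frac{z}{N}\sum_{i\neq j}\bar x_{ik}(G_{N,k})_{ij}x_{jk}$, which is a mean-zero quadratic form in the independent centred entries $x_{\cdot,k}$. Conditioning on $\mathcal{F}_{N,k-1}$ and using only the second moment $\sigma_N^2$ of the truncated entries (Lemma~\ref{lem:truncbound}, parts 1--2), Wick pairing reduces the second moment to $\frac{|z|^2\sigma_N^4}{N^2}\sum_{i\neq j}(|G_{ij}|^2 + G_{ij}\overline{G_{ji}})$, and this is controlled by $\Tr(G_{N,k}G_{N,k}^*)\leq P|\Im z|^{-2}$, giving $\Exp_{k-1}|\epsilon_k(z)|^2\leq C|z|^2 N^{-1}|\Im z|^{-4}$.

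For $\psi_k(z)=\log|\tilde g_{N,k}(z)|^2-\log|\hat g_{N,k}(z)|^2$ the same log-inequality reduces the task to bounding $|\tilde g_{N,k}-\hat g_{N,k}| = |\frac{z}{N}\sum_i(|x_{ik}|^2-1)(G_{N,k})_{ii}|$. Splitting $|x_{ik}|^2-1 = (|x_{ik}|^2-\sigma_N^2)+(\sigma_N^2-1)$, the deterministic bias term is $O(N^{2\beta(1-\alpha)})$ and negligible, while the variance term uses $\Var(|x_{ik}|^2)\leq \Exp|x_{ik}|^4\leq \kappa N^{\beta(4-\alpha)}$ (Lemma~\ref{lem:truncbound}.\ref{trunc:4thmom}) and $\sum_i|G_{ii}|^2\leq P|\Im z|^{-2}$ to give $\Exp_{k-1}|\tilde g_{N,k}-\hat g_{N,k}|^2 \leq C|z|^2 N^{\beta(4-\alpha)-1}|\Im z|^{-2}$. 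Combining Cauchy--Schwarz with both estimates yields $N^{\alpha/2-1}|T_1|, N^{\alpha/2-1}|T_2|\lesssim N^{\alpha/2-2+\beta(4-\alpha)/2}$ and $N^{\alpha/2-1}|T_3|\lesssim N^{\alpha/2-2}$ times $|z||w|/(|\Im z|^2|\Im w|^2)$; a direct check with $\beta=\tfrac14+\tfrac1\alpha+\epsilon$ shows the combined exponent is $\tfrac{3\alpha}{8}-2+\tfrac{2}{\alpha}+O(\epsilon)$, which is strictly negative on $\alpha\in[2,4)$ for small enough $\epsilon$ (this is the sharp constraint alluded to at \eqref{eqn:epsilontochoose}).

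The main obstacle is the $\psi_k$ estimate, since $\hat g_{N,k}(z) = z(1-\tfrac{1}{N}\Tr G_{N,k}(z))$ is $\mathcal{F}_{N,k-1}$-measurable but has no obvious deterministic lower bound like $|\Im z|$: its imaginary part can vanish. I would handle this by writing $|\hat g_{N,k}|\geq |\tilde g_{N,k}|-|\tilde g_{N,k}-\hat g_{N,k}|\geq |\Im z|/2$ on the event where $|\tilde g_{N,k}-\hat g_{N,k}|<|\Im z|/2$, whose complement has probability $O(N^{\beta(4-\alpha)-1}/|\Im z|^4)$ by Markov, and splitting the expectation accordingly; on the bad event one uses the crude boundedness of $\log|\tilde g|^2$ by $\log(|z|/|\Im z|(1+\|\mathbf{x}_k\|^2/N))$ (and similarly for $\hat g$, invoking the spectral edge bounds that make $\hat g$ bounded away from $0$ for $z$ away from the limiting spectrum after a priori concentration of $\tfrac{1}{P}\Tr G_{N,k}$ to $m_y$). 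Once this measure-theoretic step is in place, the arithmetic of Step 3 closes the argument.
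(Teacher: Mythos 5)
Your reduction is the same as the paper's: bound $|T_i|$ by Cauchy--Schwarz in terms of $\Exp_{k-1}|\epsilon_k|^2$ and $\Exp_{k-1}|\psi_k|^2$, control $\epsilon_k$ and $\psi_k$ by $2|\eta_k|/|\Im z|$ and $2|E_k|/|\Im z|$ via the logarithm inequality, estimate $\Exp_{k-1}|\eta_k|^2 = O(N^{-1}|z|^2/|\Im z|^2)$ by the Ward identity and $\Exp_{k-1}|E_k|^2 = O(N^{\beta(4-\alpha)-1}|z|^2/|\Im z|^2)$ by the fourth-moment bound, and close with exactly the exponent arithmetic $\tfrac{3\alpha}{8}-2+\tfrac{2}{\alpha}+O(\epsilon)<0$ that underlies \eqref{eqn:epsilontochoose}. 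Up to your last paragraph the argument is correct and matches the paper's.

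The ``main obstacle'' you identify in the final paragraph, however, is illusory, and the patch you propose for it is the one genuinely unsound step. The quantity $\hat g_{N,k}(z)=z-\tfrac{1}{N}z\Tr\G_{N,k}(z)$ \emph{does} satisfy the deterministic bound $|\hat g_{N,k}(z)|\geq|\Im z|$: by Lemma~\ref{lemma:imsgn}, $\sgn\Im\bigl(z\Tr\G_{N,k}(z)\bigr)=-\sgn\Im z$, so the two contributions to $\Im\hat g_{N,k}(z)$ have the same sign and $|\Im\hat g_{N,k}(z)|\geq|\Im z|$; the same observation gives the lower bound for $\tilde g_{N,k}$ and $g_{N,k}$, which you already used. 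Your substitute argument --- a good/bad event split with the bad event controlled by ``spectral edge bounds'' ensuring $\hat g$ is bounded away from $0$ for $z$ away from the limiting spectrum --- would not survive in this model: as the introduction notes, the largest eigenvalues of $\BA_N$ tend to $+\infty$ in the half-heavy-tailed case, so there is no almost-sure confinement of the spectrum to a neighbourhood of $[a,b]$, and in any case the lemma must hold for \emph{all} $z\in\C\backslash\R$, including $z$ with real part inside the bulk. Replace that paragraph with the one-line appeal to Lemma~\ref{lemma:imsgn} and your proof is complete.
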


\begin{proof}

Denote:
\begin{equation}
    \eta_k(z):=g_{N, k}(z) - \Tilde{g}_{N, k}(z)=\frac{1}{N} z \sum_{i\neq j}\overline{x_{ik}}x_{jk}\left(\mathbf{G}_{N, k}(z)\right)_{ij}
    \label{eqn:eta}
\end{equation}

and 

\begin{equation}E_k(z):=\Tilde{g}_{N, k}(z) - \hat{g}_{N, k}(z)=\frac{1}{N}z\sum_{i=1}^P \left(\mathbf{G}_{N, k}(z)\right)_{ii}\left(|x_{ik}|^2-1\right).\end{equation}

\begin{equation}
\epsilon_k(z)=\log\left|\frac{g_{N, k}(z)}{\Tilde{g}_{N, k}(z)}\right|^2 
= 2 \log\left|1+\frac{\eta_k(z)}{\tilde{g}_{N, k}(z)}\right| = 
-2 \log\left|1-\frac{\eta_k(z)}{g_{N, k}(z)}\right|
\label{eqn:eps1}
\end{equation}

Using that for $z\in \C$ $|1+z|\leq 1+|z|$ and that for $x>0$ holds $\log(1+x)\leq x,$ and $|\tilde{g}_{N, k}(z)|\geq |\Im z|$ 

\begin{equation}
\log\left|1+\frac{\eta_k(z)}{\Tilde{g}_{N, k}(z)}\right|
 \leq \log\left[ 1+\left| \frac{\eta_k(z)}{\Tilde{g}_{N, k}(z)}\right|\right]
\leq \frac{|\eta_k(z)|}{|\Im z|},
\label{eqn:eps2}
\end{equation}

and similarly, using $|g_{N, k}(z)|\geq |\Im z|$,
\begin{equation}
\log\left|1-\frac{\eta_k(z)}{g_{N, k}(z)}\right|
\leq \frac{|\eta_k(z)|}{|\Im z|}.
\label{eqn:eps3}
\end{equation}

Equation \eqref{eqn:eps1} and inequalities \eqref{eqn:eps2}, \eqref{eqn:eps3} allow us to conclude, that 
\begin{equation}
-2 \frac{|\eta_k(z)|}{|\Im z|} \leq \epsilon_k(z) \leq 2 \frac{|\eta_k(z)|}{|\Im z|}.\label{eqn:epsest}
\end{equation}

Using the same method, one also can prove that 
 \begin{equation}
-2 \frac{|E_k(z)|}{|\Im z|} \leq \psi_k(z) \leq 2 \frac{|E_k(z)|}{|\Im z|}.\label{eqn:thetaest}
\end{equation}
Combining \eqref{eqn:epsest} and \eqref{eqn:thetaest} with Cauchy--Schwarz inequality, we get
\begin{align}
|T_1| \leq 8\frac{\sqrt{\mathbb{E}_{k-1}|E_k(z)|^2\mathbb{E}_{k-1}|\eta_k(w)|^2}}{|\Im z| |\Im w|}\label{eqn:t1}\\
|T_2| \leq 8\frac{\sqrt{\mathbb{E}_{k-1}|\eta_k(z)|^2\mathbb{E}_{k-1}|E_k(w)|^2}}{|\Im z| |\Im w|}\label{eqn:t2}\\
|T_3| \leq 8\frac{\sqrt{\mathbb{E}_{k-1}|\eta_k(z)|^2\mathbb{E}_{k-1}|\eta_k(w)|^2}}{|\Im z| |\Im w|}.
\label{eqn:t3}
\end{align}
Now, we need to estimate $\mathbb{E}_{k-1}|\eta_k|^2$ and $\mathbb{E}_{k-1}|E_k|^2$.
\begin{lemma}
\label{lemma:eta}
\begin{enumerate}
For $\eta_k(z)$ defined above the following holds:
    \item $\mathbb{E}_{k-1}|\eta_k(z)|^2\leq O(1) N^{-1}\frac{|z|^2}{|\Im z|^2}.$
    \item
    $\mathbb{E}_{k-1}|E_k(z)|^2 \leq O(1)\frac{|z|^2}{|\operatorname{Im}z|^2}N^{4/\alpha - \alpha/4 -1 +\epsilon_0 },$ where $\epsilon_0 := \epsilon(4-\alpha)$
\end{enumerate}

\end{lemma}
\begin{proof}
The first part of the lemma can be proven by combining Ward identity (Lemma \ref{lemma:wardidentity}) and Lemma \ref{lemma:diagest}:
\begin{multline}
\mathbb{E}_{k-1}|\eta_k(z)|^2= \frac{1}{N^2}|z|^2 \mathbb{E}_{\mathbf{x}_k}|\sum
_{i\neq j}\overline{x_{ik}}x_{jk}(\mathbf{G}_{N, k}(z))_{i, j}|^2\\ \leq 4 |z|^2 \sigma_N^4 \frac{1}{N^2} \sum_{i, j}|(\mathbf{G}_{N, k}(z))_{i, j}|^2
= 4 \sigma_N^4 N^{-2} |z|^2 \frac{1}{|\Im z|}\sum_{i=1}^{P}|\Im \left(\mathbf{G}_{N, k}(z)\right)_{ii}|\\ \leq 4 \sigma_N^4 N^{-2} |z|^2 \frac{1}{|\Im z|}\sum_{i=1}^{P}|\left(\mathbf{G}_{N, k}(z)\right)_{ii}| \leq O(1) N^{-1}\frac{|z|^2}{|\Im z|^2}.
\end{multline}
The second part of the lemma follows from Lemma \ref{lemma:diagest} and part 4 of Lemma \ref{lem:truncbound}. 
\end{proof}
Combining Lemma \ref{lemma:eta} with equations \eqref{eqn:t1}, \eqref{eqn:t2} and \eqref{eqn:t3} we can see, that
\begin{equation}
|T_1|, |T_2|< O(1)\frac{ N^{-1/2 + (2/\alpha - \alpha/8 - 1/2) +\epsilon_0 /2 }|z||w|}{|\Im z|^2 |\Im w|^2}
\end{equation}
and
\begin{equation}
|T_3|< O(1) \frac{N^{-1}|z||w|}{|\Im z|^2 |\Im w|^2}.
\end{equation}
For any $\alpha\in (2, 4)$ there exist small enough $\epsilon>0$ such that 
\begin{equation}
-1/2 + (2/\alpha - \alpha/8 - 1/2) +\epsilon_0 /2 < 1 - \alpha/2.  
\label{eqn:epsilontochoose}
\end{equation}
Also, when $\alpha\in(2, 4)$
 $$
 -1<1 - \alpha/2,
 $$
 that completes the proof of Lemma \ref{lemma:T}. 
\end{proof}
\subsection{Computation of the limit}\label{sec:computation_of_limit}

Recalling Lemma \ref{lemma:diagonalisation}, to prove Lemma \ref{lemma:cltcovneeded} we should compute the limit of
$$
    \sum_{k=1}^{N}\mathbb{E}_{k-1}\left[ \tilde{Y}_k(z)\tilde{Y}_k(w)\right].
$$

Namely, we should prove that for fixed $z$ and $w$

\begin{equation}
     \frac{1}{N^{2-\alpha/2}}
     \sum_{k=1}^{N}\Cov_{k-1}\left[\frac{\frac{\partial}{\partial z}\tilde{g}_{N, k}(z)}{\tilde{g}_{N, k}(z)}, \frac{\frac{\partial}{\partial w}\tilde{g}_{N, k}(w)}{\tilde{g}_{N, k}(w)}\right] \stackrel{\Prob}{\rightarrow} C(z, w).
     \label{eqn:toproveconvergence}
\end{equation}

\begin{definition}[Uniform convergence in probability]

We say that the sequence array of random variables $X^{(k)}_N$ uniformly on $k$ converges in probability to the constant $C$ when $N \to \infty$ if for all $\epsilon>0$

\begin{equation}
    \max_{k}\Prob \left( |X_N^{(k)}-C|>\epsilon \right)
\underset{N \rightarrow \infty}{\rightarrow}
0.
\end{equation}

\end{definition}
By Lemma \ref{lemma:convergenceofaverages}, the convergence in equation \eqref{eqn:toproveconvergence} will follow from the Lemma below.

\begin{lemma}
Fix $z, w.$
\[
N^{\alpha/2-1}\Cov_{k-1}\left[\frac{\frac{\partial}{\partial z}\tilde{g}_{N, k}(z)}{\tilde{g}_{N, k}(z)}, \frac{\frac{\partial}{\partial w}\tilde{g}_{N, k}(w)}{\tilde{g}_{N, k}(w)}\right]\underset{N \rightarrow \infty}{\rightarrow} C(z, w)
\]
uniformly on $k$ in probability. 
Also, there exists a constant $D(z, w)>0$ such that for all $k, N$

\[
\left|N^{\alpha/2-1}\Cov_{k-1}\left[\frac{\frac{\partial}{\partial z}\tilde{g}_{N, k}(z)}{\tilde{g}_{N, k}(z)}, \frac{\frac{\partial}{\partial w}\tilde{g}_{N, k}(w)}{\tilde{g}_{N, k}(w)}\right]\right|<D(z, w).
\]
\label{lemma:twoconditions}
\end{lemma}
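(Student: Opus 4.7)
The strategy is to represent the ratio $\partial_z\tilde{g}_{N,k}(z)/\tilde{g}_{N,k}(z)$ as a Fourier-type integral so that the covariance, once averaged over $\mathbf{x}_k$, factorizes over the i.i.d.\ entries of $\mathbf{x}_k$, and then to read off the leading order from the characteristic function expansion of Lemma~\ref{lem:truncbound}.\ref{trunc:charfunc}. The spectral decomposition of $\mathbf{A}_{N,k}\geq 0$ yields $\sigma_z\Im \tilde{g}_{N,k}(z)\geq |\Im z|$, so
\begin{equation*}
\frac{1}{\tilde{g}_{N,k}(z)} = -\iunit\sigma_z\int_0^\infty e^{\iunit\sigma_z t\tilde{g}_{N,k}(z)}\,dt.
\end{equation*}
Multiplying by $\partial_z\tilde{g}_{N,k}$, using $(\partial_z\tilde{g}_{N,k})e^{\iunit\sigma_z t\tilde{g}_{N,k}} = (\iunit\sigma_z t)^{-1}\partial_z e^{\iunit\sigma_z t\tilde{g}_{N,k}}$, and subtracting a $z$-independent reference exponential to absorb the $t\to 0$ behaviour, Fubini rewrites the quantity in the lemma as
\begin{equation*}
\partial_z\partial_w\iint_{t,s>0}\frac{N^{\alpha/2-1}\Cov_{k-1}[U_z(t),U_w(s)]}{ts}\,dt\,ds,\qquad U_z(t):=e^{\iunit\sigma_z t\tilde{g}_{N,k}(z)}.
\end{equation*}
It therefore suffices to prove that the rescaled integrand converges pointwise in $(t,s)$ to $\mathcal{L}(z,t,w,s)$ uniformly in $k$ in probability, and is dominated by a fixed function in $L^1(dt\,ds)$ independent of $k,N$; dominated convergence, together with analyticity in $z,w$, then passes the limit and the derivatives through the integral.

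For the pointwise limit, use the conditional independence $\mathbf{x}_k\perp\mathbf{A}_{N,k}$ given $\mathcal{F}_{N,k-1}$: from $\tilde{g}_{N,k}(z) = z - (z/N)\sum_i|x_{ik}|^2(\mathbf{G}_{N,k}(z))_{ii}$ and the i.i.d.\ structure of $\mathbf{x}_k$,
\begin{equation*}
\mathbb{E}_{\mathbf{x}_k}\!\left[U_z(t)U_w(s)\mid\mathbf{A}_{N,k}\right] = e^{\iunit\sigma_z tz+\iunit\sigma_w sw}\prod_{i=1}^P\phi_N\!\bigl(\sigma_z tz(\mathbf{G}_{N,k}(z))_{ii}+\sigma_w sw(\mathbf{G}_{N,k}(w))_{ii}\bigr),
\end{equation*}
with analogous expressions for the factored conditional expectations; all arguments of $\phi_N$ lie in $\C^-$ by the same spectral calculation. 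Expanding each $\phi_N$ via Lemma~\ref{lem:truncbound}.\ref{trunc:charfunc}, taking logarithms, and summing over $i$, the linear part yields the exponent $-yK(z,t)-yK(w,s)$ through $\frac{1}{N}\Tr\mathbf{G}_{N,k}(z)\to ym_y(z)$ (Marchenko--Pastur, uniform in $k$), and the $c(\iunit\lambda)^{\alpha/2}/N^{\alpha/2}$ part produces a correction of size $N^{1-\alpha/2}$ involving either $(K(z,t)+K(w,s))^{\alpha/2}$ (joint) or $K(z,t)^{\alpha/2}+K(w,s)^{\alpha/2}$ (factored), after passing $\frac{1}{N}\sum_i(\mathbf{G}_{N,k}(z))_{ii}^{\alpha/2}\to ym_y(z)^{\alpha/2}$. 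The leading exponentials cancel in the covariance subtraction and the $\alpha/2$-power terms combine to the subadditive expression $(K(z,t)+K(w,s))^{\alpha/2}-K(z,t)^{\alpha/2}-K(w,s)^{\alpha/2}$; together with the residual prefactor $ye^{\iunit\sigma_z tz+\iunit\sigma_w sw-yK(z,t)-yK(w,s)}$ this recovers $ts\,\mathcal{L}(z,t,w,s)$ after multiplication by $N^{\alpha/2-1}$.

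Domination at infinity comes from $|\phi_N(\lambda)|\leq 1$ when $\Im\lambda\leq 0$, giving $|\mathbb{E}_{\mathbf{x}_k}U_z(t)|\leq e^{-t|\Im z|}$. Near the origin, the Taylor bound $|\phi_N(\lambda)-(1-\iunit\lambda/N)|\leq c|\lambda|^{\alpha/2}/N^{\alpha/2}$ combined with a product expansion shows that $|\Cov_{k-1}[U_z(t),U_w(s)]|$ is bounded by $CN^{1-\alpha/2}$ times the same subadditive combination, which divided by $ts$ is jointly integrable in a neighbourhood of $(0,0)$ by a straightforward polar-coordinate estimate. Multiplying by $N^{\alpha/2-1}$ cancels the $N^{1-\alpha/2}$ factor, producing an envelope independent of $N$ and $k$ that also yields the uniform constant $D(z,w)$.

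The main obstacle is the passage from the trace statements furnished by the Marchenko--Pastur law to the nonlinear analogue $\frac{1}{N}\sum_i(\mathbf{G}_{N,k}(z))_{ii}^{\alpha/2}\to ym_y(z)^{\alpha/2}$ and its joint analogue in $z,w$. Following \cite{BGM16,MM22}, the plan is to exploit the concentration of the diagonal entries $(\mathbf{G}_{N,k}(z))_{ii}$ around $m_y(z)$ controlled via the same off-diagonal estimates already developed in Section~\ref{sec:diagonalisation} (in particular Lemma~\ref{lemma:eta}), replacing $(\mathbf{G}_{N,k}(z))_{ii}$ by $m_y(z)$ inside the $\alpha/2$-power one index at a time and bounding the replacement error using the truncation moment bounds of Lemma~\ref{lem:truncbound}. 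A secondary technical issue is the consistent choice of branch of $(\cdot)^{\alpha/2}$ on the half-plane containing $\iunit\sigma_z tz(\mathbf{G}_{N,k}(z))_{ii}$. Uniformity in $k$ is automatic because every ingredient of the argument is invariant under which column is excluded.
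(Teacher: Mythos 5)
Your overall architecture (Fourier representation of $1/\tilde g_{N,k}$, factorization of the characteristic function over the i.i.d.\ entries of $\x_k$, expansion of $\phi_N$ via Lemma~\ref{lem:truncbound}, dominated convergence) is the same as the paper's, but there is a genuine gap at the very first reduction. You treat $\Cov_{k-1}[U_z(t),U_w(s)]$ as if it were the conditional covariance over $\x_k$ alone, writing $\Exp_{\x_k}[U_z(t)U_w(s)\mid \BA_{N,k}]$ with the \emph{same} resolvent $\G_{N,k}$ appearing in both factors. This is not an identity: $\Cov_{k-1}[a,b]=\Exp_{k-1}\bigl[\Exp_k a\,\Exp_k b\bigr]-\Exp_{k-1}a\,\Exp_{k-1}b$ involves the product of two averages over the columns $k+1,\dots,N$, and it differs from $\Exp_k\bigl[\Cov_{\x_k}[a,b]\bigr]$ by precisely the correlation of $a$ and $b$ flowing through those shared future columns (a toy example: $a=b=x_kY$ with $x_k\perp Y$ centred has $\Cov_{k-1}[a,a]=0$ but $\Exp_k\Cov_{\x_k}[a,a]>0$). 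The correct bridge is the independent-copy identity $\Cov_{k-1}[a,b]=\Exp_k\bigl[\Cov_{\x_k}[a,\underline{b}]\bigr]$ (Lemma~\ref{lemma:tildelemma}, generalizing \cite[Lemma~3.4]{BGM16}), which forces the second factor to be built from an independent copy $\underline{\G_{N,k}}(w)$ of the resolvent; only then does the covariance over $\x_k$ factor into $\prod_i\phi_N(tu_i+sv_i)-\prod_i\phi_N(tu_i)\phi_N(sv_i)$ with the $v_i$ conditionally independent of the $u_i$. Since both resolvents concentrate on $m_y$ in the limit, the final formula would look the same, but as written your product formula is not equal to the quantity in the lemma, and repairing it is exactly the missing lemma.

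A second, more technical hole is in the domination. Your integrable envelope relies on the expansion of $\phi_N$ (valid only when its argument, of size comparable to $t|z|/(N|\Im z|)$, is small) near the origin, and on $|\phi_N|\le 1$ at infinity; but $|\phi_N|\le 1$ alone yields a bound on the covariance that, after multiplication by $N^{\alpha/2-1}$, is not uniform in $N$. In the intermediate regime $t\gtrsim N^{1-\epsilon}|\Im z|/|z|$ neither tool applies, which is why the paper first cuts the domain (the function $\mathcal{L}^0_{N,k}$) and shows separately, via the estimate $\mathcal{S}(z,t,w,s)$ and a Cauchy inequality, that the excised region contributes $O(N^{\alpha/2-2+\epsilon})\to 0$. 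Your sketch needs an analogous splitting before the envelope $\mathcal{F}$ can be claimed to be $N$- and $k$-independent.
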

Further, we will do some preparatory work to rewrite expressions in Lemma \ref{lemma:twoconditions}. 
\begin{definition}[$k$-independent copy]
Fix $k.$ Denote $\underline{\X_N}$ a $P\times N$ matrix, with the same distribution as $\X_N,$ whose first $k$ columns match those of matrix $\X_N,$ and others are independent of $\X_N.$ For any random variable $a:=F(\X_N)$ for some non-random function $F$ on the space of $P\times N$ matrices we will denote $\underline{a}:=F(\underline{\X_N}).$
\end{definition}
Let $\mathcal{F}_{\x_k}$ be the $\sigma$-algebra, generated by all the columns of matrix $\X_N,$ apart from the column $\x_k.$ We will use the notations $\Exp_{\x_k}[\cdot]$ and $\Cov_{\x_k}[\cdot, \cdot]$ that act on any random variables $a, b$ the following way: 
\begin{equation}
    \Exp_{\x_k}[a]:= \Exp\left[a\mid \mathcal{F}_{\x_k}\right]
\end{equation}
and
\begin{equation}
    \Cov_{\x_k}[a, b]:= \Exp_{\x_k}[ab]-\Exp_{\x_k}[a]\Exp_{\x_k}[b].
\end{equation}
We generalize \cite[Lemma~3.4]{BGM16} for Sample Covariance matrices. 
\begin{lemma}
Suppose that $a=F_1(\X_N)$ and $b=F_2(\X_N),$ for some non-random functions $F_1, F_2$ on the space of $P\times N$ matrices, and $\Var a < +\infty,$ $\Var b < +\infty.$ Then 
\begin{equation}
    \Cov_{k-1}\left[a, b\right]= \Exp_k\left(\Cov_{\x_k}\left[a, \underline{b}\right]\right).
\end{equation}
\label{lemma:tildelemma}
\end{lemma}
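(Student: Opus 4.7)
The plan is to unpack both sides in terms of ordinary conditional expectations, and use the key feature of the $k$-independent copy: that integrating out the fresh columns $\tilde{\x}_{k+1},\dots,\tilde{\x}_N$ of $\underline{\X_N}$ reproduces the martingale projection $\Exp_k[\,\cdot\,]$ applied to the original random variable. The single nontrivial tool beyond ordinary tower property will be the identity
\begin{equation}
\Exp_k\big[\Exp_{\x_k}[Y]\big]=\Exp_{k-1}[Y]
\label{eqn:planidentity}
\end{equation}
for any $\sigma(\X_N)$-measurable $Y$, which holds because $\Exp_{\x_k}[Y]$ is already a function of $\x_1,\dots,\x_{k-1},\x_{k+1},\dots,\x_N$, so $\Exp_k$ integrates out $\x_{k+1},\dots,\x_N$ (independent of $\x_k$, hence independent of $\Exp_{\x_k}[Y]$'s $\x_k$-dependence, which has already been removed), leaving an $\mathcal{F}_{k-1}$-measurable quantity whose $\mathcal{F}_{k-1}$-conditional expectation matches that of $Y$.

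First, I would observe that because $\tilde{\x}_{k+1},\dots,\tilde{\x}_N$ are independent of $\X_N$ and equidistributed with $\x_{k+1},\dots,\x_N$, integrating them out gives
\begin{equation}
\Exp[\underline{b}\mid \X_N]=\Exp_k[b],
\end{equation}
and in particular $\Exp_{\x_k}[\underline{b}]=\Exp_{\x_k}[\Exp_k[b]]=\Exp_{k-1}[b]$ by \eqref{eqn:planidentity} applied with $Y=\Exp_k[b]$ (or directly, since $\Exp_k[b]$ depends only on $\x_1,\dots,\x_k$ and integrating over $\x_k$ given $X_{-k}$ yields $\Exp_{k-1}[b]$). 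Similarly, the same tower/independence argument gives $\Exp_{\x_k}[a\,\underline{b}]=\Exp_{\x_k}\bigl[a\cdot\Exp_k[b]\bigr]$, since conditionally on $(X_{-k},\x_k)$ the variable $a$ is fixed and $\underline{b}$ still averages to $\Exp_k[b]$.

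With those substitutions the right-hand side becomes
\begin{equation}
\Exp_k\big[\Cov_{\x_k}[a,\underline{b}]\big]
=\Exp_k\Big[\Exp_{\x_k}\bigl[a\cdot\Exp_k[b]\bigr]-\Exp_{\x_k}[a]\cdot\Exp_{k-1}[b]\Big].
\end{equation}
Now I apply \eqref{eqn:planidentity} to the two pieces. For the first, with $Y=a\cdot\Exp_k[b]$, I get $\Exp_{k-1}\bigl[a\cdot\Exp_k[b]\bigr]=\Exp_{k-1}\bigl[\Exp_k[a]\cdot\Exp_k[b]\bigr]$ by pulling the $\mathcal{F}_k$-measurable factor $\Exp_k[b]$ outside the inner $\Exp_k$. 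For the second, $\Exp_{k-1}[b]$ is $\mathcal{F}_{k-1}\subset\mathcal{F}_k$-measurable, so it pulls out of $\Exp_k$ and one uses \eqref{eqn:planidentity} with $Y=a$ to get $\Exp_{k-1}[b]\cdot\Exp_{k-1}[a]$. Combining these yields
\begin{equation}
\Exp_k\big[\Cov_{\x_k}[a,\underline{b}]\big]
=\Exp_{k-1}\bigl[\Exp_k[a]\Exp_k[b]\bigr]-\Exp_{k-1}[a]\Exp_{k-1}[b],
\end{equation}
which is exactly the expansion of $\Cov_{k-1}[a,b]$ given in the definition recalled in the paper.

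The main (and essentially only) subtle point is the identity \eqref{eqn:planidentity}: since $\mathcal{F}_{\x_k}$ is not contained in $\mathcal{F}_k$, this is not a direct application of the tower property, but rather of the observation that $\Exp_{\x_k}[Y]$ is $\sigma(X_{-k})$-measurable and is unaffected by further averaging over $\x_{k+1},\dots,\x_N$ (which are independent of the $\x_k$-integration that has already been performed), leaving an $\mathcal{F}_{k-1}$-measurable quantity whose total integral against any bounded $\mathcal{F}_{k-1}$-test function agrees with that of $Y$. The $L^2$ hypothesis $\Var a,\Var b<\infty$ is what allows all conditional expectations and Fubini-style swaps of integration to be taken in an honest $L^2$ sense.
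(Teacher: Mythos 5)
Your proof is correct and follows essentially the same route as the paper's: both expand $\Cov_{\x_k}[a,\underline b]$, use the independence and equidistribution of the fresh columns of $\underline{\X_N}$ to replace $\underline b$ by $\Exp_k[b]$ (the paper phrases this as conditional independence of $a$ and $\underline b$ given $\mathcal F_{N,k}$), and rely on the same Fubini-type identity $\Exp_k\circ\Exp_{\x_k}=\Exp_{k-1}$. The only slip is cosmetic: $\Exp_{\x_k}\bigl[\Exp_k[b]\bigr]=\Exp_{k-1}[b]$ is not literally an instance of your identity \eqref{eqn:planidentity} (the operators compose in the other order), but the direct argument you give in the parenthetical is the right one.
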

\begin{proof}
$a$ and $\underline{b}$ are independent when the first $k$ columns of the matrix $\mathbf{X}_N$ are fixed. Thus,
\begin{equation}
\Exp_k(a\underline{b})=\Exp_k(a)\Exp_k(\underline{b}).
\label{eqn:tildecov}
\end{equation}
Notice, that $\Exp_k b = \Exp_k \underline{b}, $ 
which further yields
\begin{equation}
\Exp_{k-1}\left(\Exp_k(a)\Exp_k(b)\right)=\Exp_{k-1}\left(\Exp_k(a\underline{b})\right)=\Exp_{k-1}\left(a\underline{b}\right)=\Exp_{k}\Exp_{\x_k}\left(a\underline{b}\right).
\label{eqn:tildefirst}
\end{equation}
Notice, that
$\underline{\Exp_{\x_k} b}=\Exp_{\x_k} \underline{b}.$
Similarly to equation \eqref{eqn:tildecov}
\begin{equation}
\Exp_k\left(\Exp_{\x_k} a \Exp_{\x_k} \underline{b}\right) = \Exp_{k-1} a \Exp_{k-1}\underline{b}.
\label{eqn:secondtilde}
\end{equation}
Combining \eqref{eqn:tildefirst} and \eqref{eqn:secondtilde} we conclude the statement of the Lemma.
\end{proof}
Using Lemma \ref{lemma:tildelemma} , we conclude that

\begin{equation}
\Cov_{k-1}\left[ \frac{ \frac{\partial}{\partial z} \tilde{g}_{N, k}(z)}{\tilde{g}_{N, k}(z)} , \frac{\frac{\partial}{\partial w}\tilde{g}_{N, k}(w)}{\tilde{g}_{N, k}(w)} \right]=\Exp_{k}\left[\Cov_{\mathbf{x}_k} \left[\frac{ \frac{\partial}{\partial z} \tilde{g}_{N, k}(z)}{\tilde{g}_{N, k}(z)} ,  \frac{\frac{\partial}{\partial w}\underline{\tilde{g}_{N, k}}(w)}{\underline{\tilde{g}_{N, k}}(w)}\right] \right].
\label{eqn:rewritingwithtildelemma}
\end{equation}

 By Lemma \ref{lemma:imsgn}, $\Re \left(\sgn \Im z \iunit \tilde{g}_{N, k}(z)\right) \leq - |\Im z|.$ Thus, we can rewrite

\begin{multline}
\frac{\frac{\partial}{\partial z}\tilde{g}_{N, k}(z)}{\tilde{g}_{N, k}(z)}=-\iunit \sgn \Im z \frac{\partial}{\partial z}\tilde{g}_{N, k}(z)\times \int_0^{\infty} \exp\left( \sgn \Im z \iunit t \tilde{g}_{N, k} (z) \right) d t \\
=-\iunit \sgn \Im z \int_0^{\infty} \frac{\partial}{\partial z} \frac{1}{\iunit t \sgn \Im z}\exp\left(\sgn \Im z \iunit t \tilde{g}_{N, k}(z)\right) d t.
\label{eqn:integralovert}
\end{multline}

Substituting \eqref{eqn:integralovert} into \eqref{eqn:rewritingwithtildelemma} gives:

\begin{multline}
N^{\alpha/2-1}\Cov_{\mathbf{x}_k} \left[\frac{ \frac{\partial}{\partial z} \tilde{g}_{N, k}(z)}{\tilde{g}_{N, k}(z)} ,  \frac{\frac{\partial}{\partial w}\underline{\tilde{g}_{N, k}}(w)}{\underline{\tilde{g}_{N, k}}(w)}\right] = \\ =\int_0^{\infty}\int_0^{\infty}  \frac{\partial}{\partial z} \frac{\partial}{\partial w}N^{\alpha/2-1}\frac{\Cov_{\mathbf{x}_k}\left[\exp\left(\sgn \Im z \iunit t \tilde{g}_{N, k}(z)\right), \exp\left(\sgn \Im w \iunit s \underline{\tilde{g}_{N, k}}(w)\right)\right]}{ts} \diff t \diff s.
\label{eqn:expand_as_integral}
\end{multline}
Denote
$$
\mathcal{L}_{N, k}(z, t, w, s):=N^{\alpha/2-1}\frac{\Cov_{\mathbf{x}_k}\left[\exp\left(\sgn \Im z \iunit t \tilde{g}_{N, k}(z)\right), \exp\left(\sgn \Im w  \iunit s \underline{\tilde{g}_{N, k}}(w)\right)\right]}{ts}.
$$
Equation \eqref{eqn:expand_as_integral} allows to rewrite the first condition of the Lemma \ref{lemma:twoconditions} as 
\begin{equation}
\Exp_{k}\int_0^{\infty}\int_0^{\infty} \frac{\partial}{\partial z} \frac{\partial}{\partial w}\mathcal{L}_{N, k}(z, t, w, s) \diff t \diff s \underset{N \rightarrow \infty}{\rightarrow} \int_0^{\infty}\int_0^{\infty} \frac{\partial}{\partial z} \frac{\partial}{\partial w}\mathcal{L}(z, t, w, s) \diff t \diff s
\label{eqn:convergence}
\end{equation}

uniformly on $k$ in probability, and the second condition as

\begin{equation}
\left|\Exp_k\left[\int_0^{\infty}\int_0^{\infty} \frac{\partial}{\partial z} \frac{\partial}{\partial w}\mathcal{L}_{N, k}(z, t, w, s) \diff t \diff s\right]\right|<D(z, w).
\label{eqn:bound}
\end{equation}
Firstly, we will prove the bound \eqref{eqn:bound}.  The intermediate lemma from this proof will allow us to cut the domain of integration in \eqref{eqn:convergence}. 
\subsubsection{Proof of the bound \eqref{eqn:bound} and simplification of  \eqref{eqn:convergence}.}
\begin{lemma}
\begin{equation}
\left|\frac{\partial}{\partial z} \frac{\partial}{\partial w}\mathcal{L}_{N, k}(z, t, w, s)  \diff t \diff s \right|\leq \mathcal{S}(z, t, w, s),
\end{equation}
where 
$$\mathcal{S}(z, t, w, s)= 32N^{\alpha/2-1}\frac{\exp(\frac{-t|\Im z|-s|\Im w|}{2})}{|\Im z||\Im w|}\times\min\left(\frac{1}{t}\frac{|w|}{|\Im w|}, \frac{1}{s}\frac{|z|}{|\Im z| } \right).$$
\end{lemma}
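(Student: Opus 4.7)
The plan is to prove the inequality by first establishing a pointwise bound on $|\mathcal{L}_{N,k}(z,t,w,s)|$ with the desired $\min$ shape and exponential decay, and then converting that into a bound on the mixed derivative via Cauchy's integral formula applied in the two variables $z$ and $w$.

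For the pointwise bound, I introduce $A_z := \exp(\iunit\sgn_{\Im z}\, t\tilde{g}_{N,k}(z))$, $\underline{B}_w := \exp(\iunit\sgn_{\Im w}\, s\underline{\tilde{g}_{N,k}}(w))$ together with the deterministic surrogate $A_z^{(0)} := \exp(\iunit\sgn_{\Im z}\, tz)$. Since additive constants vanish from covariances, $\Cov_{\x_k}(A_z,\underline{B}_w)=\Cov_{\x_k}(A_z-A_z^{(0)},\underline{B}_w)$. Lemma~\ref{lemma:imsgn} delivers $\Re(\iunit\sgn_{\Im z}(\tilde{g}_{N,k}(z)-z))\leq 0$, so the elementary inequality $|e^u-1|\leq|u|$ for $\Re u\leq 0$ combined with $|\tilde{g}_{N,k}(z)-z|\leq \frac{|z|}{N|\Im z|}\|\x_k\|^2$ (using Lemma~\ref{lemma:diagest}) and $\Exp\|\x_k\|^2/N=(P/N)\sigma_N^2=O(1)$ yields
\[
\Exp_{\x_k}|A_z - A_z^{(0)}|\leq C t\,\frac{|z|}{|\Im z|}\, e^{-t|\Im z|}.
\]
The asymmetric covariance estimate $|\Cov(X,Y)|\leq 2\Exp|X-c|\cdot\|Y\|_\infty$, applied with $c=A_z^{(0)}$ and $|\underline{B}_w|\leq e^{-s|\Im w|}$ (again Lemma~\ref{lemma:imsgn}), then produces one of the two required bounds
\[
|\Cov_{\x_k}(A_z,\underline{B}_w)|\leq C t\,\frac{|z|}{|\Im z|}\, e^{-t|\Im z|-s|\Im w|},
\]
and the symmetric argument, subtracting a surrogate from $\underline{B}_w$ instead, gives the companion bound with $s\tfrac{|w|}{|\Im w|}$. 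Taking the minimum and dividing by $ts$ yields
\[
|\mathcal{L}_{N,k}(z,t,w,s)|\leq C N^{\alpha/2-1}\min\!\Bigl(\tfrac{1}{s}\tfrac{|z|}{|\Im z|},\tfrac{1}{t}\tfrac{|w|}{|\Im w|}\Bigr)e^{-t|\Im z|-s|\Im w|}.
\]

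For the second step, $\mathcal{L}_{N,k}$ is jointly holomorphic in $(z,w)$ on each product of open half-planes $\{\Im z\gtrless 0\}\times\{\Im w\gtrless 0\}$, since $\sgn_{\Im z}$ and $\sgn_{\Im w}$ are locally constant there. Cauchy's integral formula on the polydisc of radii $|\Im z|/2$ and $|\Im w|/2$ centred at $(z,w)$ therefore gives
\[
\Bigl|\tfrac{\partial^2}{\partial z\,\partial w}\mathcal{L}_{N,k}(z,t,w,s)\Bigr|\leq \tfrac{4}{|\Im z||\Im w|}\sup_{\substack{|z'-z|=|\Im z|/2\\|w'-w|=|\Im w|/2}}|\mathcal{L}_{N,k}(z',t,w',s)|.
\]
On this polydisc $|\Im z'|\geq|\Im z|/2$ and, since $|\Im z|\leq|z|$, $|z'|\leq\tfrac{3}{2}|z|$, hence $|z'|/|\Im z'|\leq 3|z|/|\Im z|$ and $e^{-t|\Im z'|}\leq e^{-t|\Im z|/2}$, with analogous bounds in $w$. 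Substituting the pointwise bound from the first step into this Cauchy estimate produces exactly $\mathcal{S}(z,t,w,s)$, the accumulated numerical constant $4\cdot 3\cdot 3\cdot C$ comfortably fitting under $32$.

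The main subtlety, and the reason for the asymmetric covariance estimate rather than a naive application of Cauchy--Schwarz, is the preservation of the $\min$ structure in $(t,s)$: Cauchy--Schwarz would yield only a factor of $\sqrt{ts}$ of linearity, which is not sharp enough to secure integrability of $\mathcal{S}$ near the origin of the $(t,s)$-quadrant. Producing linear-in-$t$ and linear-in-$s$ bounds separately and then taking their minimum gives $\mathcal{S}$ with the factor $1/\max(t,s)$, which when paired with the exponential decay from $e^{-(t|\Im z|+s|\Im w|)/2}$ is integrable on $(0,\infty)^2$ as required by the forthcoming dominated-convergence argument.
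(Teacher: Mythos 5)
Your proposal is correct and follows essentially the same route as the paper: subtracting the deterministic surrogate $\exp(\iunit\sgn_{\Im z}tz)$ inside the covariance is the same manipulation as the paper's factoring that exponential out and bounding $\bigl|\bigl(\exp(\sgn_{\Im z}\iunit t f_{N,k}(z))-1\bigr)/t\bigr|\leq|f_{N,k}(z)|$, and both arguments then invoke Lemma~\ref{lemma:imsgn}, Lemma~\ref{lemma:diagest}, and the Cauchy inequality on the polydisc of radii $|\Im z|/2$ and $|\Im w|/2$. The only quibble is your constant bookkeeping at the end (each branch of the $\min$ carries one ratio, not two, and the $O(1)$ from $\Exp\|\x_k\|^2/N\to y$ should be tracked), but this does not affect the argument.
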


\begin{proof}
By definition of $\tilde{g}$
\begin{multline*}
\mathcal{L}_{N, k}(z, t, w, s)=
N^{\alpha/2-1} \exp(\iunit \sgn_{\Im z}tz+\iunit \sgn_{\Im w}sw)\times \\ \Cov_{\mathbf{x}_k}\left[\frac{\exp\left(\sgn_{\Im z}\iunit t f_{N, k}(z)\right)}{t}, \frac{\exp\left(\sgn_{\Im w} \iunit s \underline{f_{N, k}}(w)\right)}{s}\right], \end{multline*}
where $f_{N, k}(z):=- \frac{1}{N}\mathbf{x}_k^* z \operatorname{diag}\left[\mathbf{G}_{N, k}(z)\right] \mathbf{x}_k$ and $\underline{f_{N, k}}(z):=- \frac{1}{N}\mathbf{x}_k^* z \operatorname{diag}\left[\underline{\mathbf{G}_{N, k}}(z)\right] \mathbf{x}_k.$ 
By the properties of the $\Cov$ operator, the following holds:
\begin{multline}
\Cov_{\mathbf{x}_k}\left[\frac{\exp\left(\sgn_{\Im z}\iunit t f_{N, k}(z)\right)}{t}, \frac{\exp\left(\sgn_{\Im w} \iunit s \underline{f_{N, k}}(w)\right)}{s}\right]=\\
\Cov_{\mathbf{x}_k}\left[\frac{\exp\left(\sgn_{\Im z}\iunit t f_{N, k}(z)\right)}{t}, \frac{\exp\left(\sgn_{\Im w} \iunit s \underline{f_{N, k}}(w)\right)-1}{s}\right]=\\
\Cov_{\mathbf{x}_k}\left[\frac{\exp\left(\sgn_{\Im z}\iunit t f_{N, k}(z)\right)-1}{t}, \frac{\exp\left(\sgn_{\Im w} \iunit s \underline{f_{N, k}}(w)\right)}{s}\right].
\end{multline}
One can bound 
\begin{multline}
    \left|\frac{\exp\left(\sgn_{\Im z} \iunit t f_{N, k}(z)\right)-1}{t}\right|=\\ \left|\frac{1}{t}\int_0^t f_{N, k}(z) \exp\left(\sgn_{\Im z}\iunit u f_{N, k}(z)\right) \diff u\right| \leq |f_{N, k}(z)|,
\end{multline}
and analogously 

\[
\left|\frac{\exp\left(\sgn_{\Im w}\iunit s \underline{f_{N, k}}(w)\right)-1}{s}\right|\leq |\underline{f_{N, k}}(w)|.
\]

The second part of Lemma \ref{lemma:imsgn} yields that $|\exp\left(\sgn_{\Im z} \iunit t f_{N, k}(z)\right)|\leq 1$ and that $|\exp\left(\sgn_{\Im w} \iunit s f_{N, k}(w)\right)|\leq 1.$ Using  Lemma \ref{lemma:diagest} we also  estimate  $\Exp_{\x_k}|f_{N, k}(z)|\leq \frac{|z|}{|\Im z|}.$

Thus, 
\[
\left|\Cov_{\mathbf{x}_k}\left[\frac{\exp\left(\sgn_{\Im z}\iunit t f_{N, k}(z)\right)}{t}, \frac{\exp\left(\sgn_{\Im w} \iunit s \underline{f_{N, k}}(w)\right)}{s}\right]\right|\leq 2 \min\left(\frac{1}{t}\frac{|w|}{|\Im w|}, \frac{1}{s}\frac{|z|}{|\Im z| } \right).
\]

Using that $\Re(\sgn_{\Im z} \iunit t z)\leq -|\Im z|$ and
$\Re(\sgn_{\Im w} \iunit s w)\leq -|\Im w|,$ we can bound $\left|\exp(\iunit \sgn_{\Im z}tz+\iunit \sgn_{\Im w}sw)\right|$ so that the inequality above leads to
\begin{equation}
  |\mathcal{L}_{N, k}(z, t, w, s)|\leq 2 N^{\alpha/2-1}\exp(-t|\Im z|-s|\Im w|)\times\min\left(\frac{1}{t}\frac{|w|}{|\Im w|}, \frac{1}{s}\frac{|z|}{|\Im z| } \right).
\end{equation}
Function $\mathcal{L}_{N, k}(z, t, w, s)$ is analytic by $z$ and $w.$ Applying Cauchy inequality (Lemma~\ref{lemma:cauchy_inequality}), we get that 
\begin{equation}
  \left|\frac{\partial}{\partial z}\frac{\partial}{\partial w}\mathcal{L}_{N, k}(z, t, w, s)\right|\leq 32 N^{\alpha/2-1}\frac{\exp(\frac{-t|\Im z|-s|\Im w|}{2})}{|\Im z||\Im w|}\times\min\left(\frac{1}{t}\frac{|w|}{|\Im w|}, \frac{1}{s}\frac{|z|}{|\Im z| } \right).
\end{equation}
\end{proof}

\begin{corollary}
Suppose $0<\epsilon<2-\frac{\alpha}{2}.$ Denote 
\begin{equation}
    \mathcal{L}_{N, k}^0(z, t, w, s):=\mathcal{L}_{N, k}(z, t, w, s)\times \mathbf{1}_{\frac{t|z|}{\left|\Im z\right|}<N^{1-\epsilon}}\times \mathbf{1}_{\frac{s|w|}{\left|\Im w\right|}<N^{1-\epsilon}}. 
\end{equation}
Then 
\begin{multline}
    \int_0^{\infty}\int_0^{\infty} \left|\frac{\partial}{\partial z} \frac{\partial}{\partial w}\mathcal{L}_{N, k}(z, t, w, s) -\frac{\partial}{\partial z} \frac{\partial}{\partial w}\mathcal{L}^0_{N, k}(z, t, w, s)\right|\diff t \diff s \\ \leq O(1)N^{\alpha/2 - 2+\epsilon} \frac{|z||w|}{|\Im z|^3 |\Im w|^3}.
\end{multline}
\end{corollary}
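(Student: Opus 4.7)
The plan is entirely mechanical given the pointwise bound on $|\partial_z\partial_w \mathcal{L}_{N,k}|$ established in the preceding lemma. The difference $\mathcal{L}_{N,k}-\mathcal{L}^0_{N,k}$ is supported on the complement of the truncation region; explicitly, since $1-\mathbf{1}_A\mathbf{1}_B = \mathbf{1}_{A^c\cup B^c}$, this complement is
\[
R \;:=\; R_t \cup R_s, \qquad R_t := \left\{ t \geq t_0 \right\}, \quad R_s := \left\{ s \geq s_0 \right\},
\]
with thresholds $t_0 := N^{1-\epsilon}|\Im z|/|z|$ and $s_0 := N^{1-\epsilon}|\Im w|/|w|$. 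Hence it suffices to show
\[
\int\!\!\int_{R} \mathcal{S}(z,t,w,s)\,\diff t\,\diff s \;\leq\; O(1)\,N^{\alpha/2-2+\epsilon}\,\frac{|z||w|}{|\Im z|^3|\Im w|^3},
\]
and the inequality for $R_t$ and $R_s$ can be handled symmetrically.

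For the $R_t$ piece I would choose the \emph{first} branch of the minimum in the definition of $\mathcal{S}$, namely $\min(\cdot,\cdot)\leq |w|/(t|\Im w|)$, so that the $s$--integral factors and gives
\[
\int_0^\infty \exp\left(-\tfrac{s|\Im w|}{2}\right) \diff s \;=\; \frac{2}{|\Im w|}.
\]
The $t$--integral is $\int_{t_0}^\infty t^{-1}\exp(-t|\Im z|/2)\,\diff t$. Bounding $1/t \leq 1/t_0$ on the range of integration yields
\[
\int_{t_0}^\infty \frac{\exp(-t|\Im z|/2)}{t}\,\diff t \;\leq\; \frac{2}{|\Im z|\,t_0}\,\exp\!\left(-\tfrac{|\Im z|t_0}{2}\right) \;\leq\; \frac{2|z|}{N^{1-\epsilon}\,|\Im z|^2},
\]
where in the last step I used the trivial bound $e^{-|\Im z|t_0/2}\leq 1$ (any remaining exponential decay is more than we need). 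Combining this with the prefactor $32\,N^{\alpha/2-1}/(|\Im z||\Im w|^2)$ from $\mathcal{S}$ and the $s$--integral above produces exactly $O(1)\,N^{\alpha/2-1}\cdot N^{-(1-\epsilon)}\cdot|z||w|/(|\Im z|^3|\Im w|^3) = O(1)\,N^{\alpha/2-2+\epsilon}|z||w|/(|\Im z|^3|\Im w|^3)$, which is the claim.

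The $R_s$ piece is treated identically, this time taking the \emph{second} branch of the minimum in $\mathcal{S}$, namely $\min(\cdot,\cdot)\leq |z|/(s|\Im z|)$, and swapping the roles of $(z,t)$ and $(w,s)$. Adding the two contributions yields the Corollary. There is no real obstacle: the only thing to notice is that the threshold $N^{1-\epsilon}$ in the truncation was chosen precisely to match the prefactor $N^{\alpha/2-1}$ of $\mathcal{S}$, so that their ratio reproduces the target exponent $N^{\alpha/2-2+\epsilon}$. All other factors are automatic from the integrable exponentials in $\mathcal{S}$.
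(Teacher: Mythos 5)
Your proof is correct and follows essentially the same route as the paper: both integrate the pointwise bound $\mathcal{S}$ over the complement of the truncation region using the thresholds $t_0=N^{1-\epsilon}|\Im z|/|z|$, $s_0=N^{1-\epsilon}|\Im w|/|w|$ and one branch of the minimum on each piece. The only (immaterial) difference is that you cover the complement by the overlapping half-planes $\{t\geq t_0\}\cup\{s\geq s_0\}$ and bound $1/t\leq 1/t_0$ on the tail, whereas the paper partitions along the curve $s/s_0=t/t_0$ so that each chosen branch is the true minimum; both extract the same $N^{-(1-\epsilon)}$ gain and arrive at the same final bound.
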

\begin{proof}
    Suppose that $t_0 = \frac{N^{1-\epsilon}\left|\Im z\right|}{|z|}$ and $s_0 = \frac{N^{1-\epsilon}\left|\Im w\right|}{|w|}.$
    Then $\min\left(\frac{1}{t}\frac{|w|}{|\Im w|}, \frac{1}{s}\frac{|z|}{|\Im z| } \right) = N^{-1+\epsilon}\frac{|z||w|}{|\Im z||\Im w| }\min(\frac{t_0}{t}, \frac{s_0}{s}).$
    
    \begin{multline}
    \int_0^{\infty}\int_0^{\infty} \left|\frac{\partial}{\partial z} \frac{\partial}{\partial w}\mathcal{L}_{N, k}(z, t, w, s) -\frac{\partial}{\partial z} \frac{\partial}{\partial w}\mathcal{L}^0_{N, k}(z, t, w, s)\right|\diff t \diff s\\ \leq O(1) N^{\alpha/2-2+\epsilon} \frac{|z||w|}{|\Im z|^2 |\Im w|^2}\left( \int_{t_0}^{+\infty} \int_0^{t\frac{s_0}{t_0}}\frac{t_0}{t}\exp\left(\frac{-t|\Im z|-s|\Im w|}{2}\right) \diff s \diff t  \right. \\
    \left. + \int_{s_0}^{+\infty} \int_0^{s\frac{t_0}{s_0}}\frac{s_0}{s}\exp\left(\frac{-t|\Im z|-s|\Im w|}{2}\right) \diff t \diff s \right) \\ 
    \leq O(1) N^{\alpha/2-2+\epsilon} \frac{|z||w|}{|\Im z|^2 |\Im w|^2}
    \left(\frac{1}{|\Im w|}\int_{t_0}^{+\infty}\frac{t_0}{t}\exp\left(\frac{-t |\Im z|}{2}\right) \diff t\right.\\ \left.+\frac{1}{|\Im z|}\int_{s_0}^{+\infty}\frac{s_0}{s}\exp\left(\frac{-s |\Im w|}{2}\right) \diff s \right)\\
    \leq O(1)N^{\alpha/2 - 2+\epsilon} \frac{|z||w|}{|\Im z|^3 |\Im w|^3}.
    \end{multline}  
\end{proof}

Suppose, that there exist a function $\mathcal{F}(z, t, w, s)>0$ such that\\
$\left|\frac{\partial}{\partial z} \frac{\partial}{\partial w}\mathcal{L}^0_{N, k}(z, t, w, s)\right|<\mathcal{F}(z, t, w, s)$ everywhere with probability 1 for all $N, k$ and 
\begin{equation}
    \int_0^{\infty}\int_0^{\infty} \mathcal{F}(z, t, w, s) \diff t \diff s <D(z, w).
    \label{eqn:bound_integrable_function}
\end{equation}
Then condition \eqref{eqn:bound} would automatically hold and convergence in \eqref{eqn:convergence} will follow from 
\begin{multline}
    \int_{s_1}^{s_2}\int_{t_1}^{t_2}\frac{\partial}{\partial z} \frac{\partial}{\partial w}\mathcal{L}_{N, k}(z, t, w, s)  \diff t \diff s  = \frac{\partial}{\partial z}\frac{\partial}{\partial w} \int_{s_1}^{s_2}\int_{t_1}^{t_2} \mathcal{L}_{N, k}(z, t, w, s)  \diff t \diff s \\ \stackrel{\Prob}{\rightarrow}\frac{\partial}{\partial z} \frac{\partial}{\partial w} \int_{s_1}^{s_2}\int_{t_1}^{t_2}\mathcal{L}(z, t, w, s)  \diff t \diff s = 
    \int_{s_1}^{s_2}\int_{t_1}^{t_2}\frac{\partial}{\partial z} \frac{\partial}{\partial w}\mathcal{L}(z, t, w, s)  \diff t \diff s \label{eqn:convergence_cut} 
\end{multline}
uniformly in $k$ for all fixed $0<t_1<t_2$ and $0<s_1<s_2.$

\begin{lemma}
\label{lemma:uniform_bounded}
For all $N, k$ for $\mathcal{L}_0(z, t, w, s)$ defined as above
\begin{equation*}
    \left|\frac{\partial}{\partial z}\frac{\partial}{\partial w}\mathcal{L}^0_{N, k}(z, t, s, w)\right| \leq \mathcal{F}(z, t, w, s),
\end{equation*}
where
\begin{multline*}
    \mathcal{F}(z, t, w, s):  =  \\ O(1)\exp\left(\frac{-t|\Im z| - s |\Im w|}{2}\right)\left(t^{\alpha/4-1}|z|^{\alpha/4}| \Im z|^{\alpha/4-1}\times s^{\alpha/4-1}|w|^{\alpha/4}| \Im w|^{\alpha/4-1}\right).
\end{multline*}
\end{lemma}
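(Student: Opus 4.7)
The plan is to exploit the product structure that arises when integrating over $\x_k$: because $\tilde g_{N,k}(z)=z-\frac{z}{N}\sum_{i=1}^{P}|x_{ik}|^2(\G_{N,k}(z))_{ii}$ depends on $\x_k$ only through the i.i.d.\ collection $\{|x_{ik}|^2\}_i$, the inner expectation in the definition of $\mathcal L_{N,k}$ factorises into a product over $i$ of the characteristic function $\phi_N$ from Lemma~\ref{lem:truncbound}\ref{trunc:charfunc}. On the cut domain the argument $|\lambda|/N$ is at most $N^{-\epsilon}$, so the explicit expansion of $\phi_N$ applies uniformly, and the bound will then be transferred from $\mathcal L^0_{N,k}$ to its mixed derivative via Cauchy's inequality, exactly as in the preceding lemma of this subsection.

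Concretely, writing $\lambda_i:=\sgn_{\Im z}\,tz(\G_{N,k}(z))_{ii}$ and $\mu_i:=\sgn_{\Im w}\,sw(\underline{\G_{N,k}}(w))_{ii}$, I would first verify that $\Im\lambda_i,\Im\mu_i,\Im(\lambda_i+\mu_i)\leq 0$---this is the same sign inequality already used in Lemma~\ref{lemma:imsgn} for $\tilde g$, since $\sgn_{\Im z}\Im(zG_{ii})\leq 0$ for sample-covariance resolvents---so that Lemma~\ref{lem:truncbound}\ref{trunc:charfunc} applies to each factor and yields the exact identity
\[
\mathcal L_{N,k}(z,t,w,s)=\frac{N^{\alpha/2-1}\,e^{\iunit\sgn_{\Im z}tz+\iunit\sgn_{\Im w}sw}}{ts}\,\Exp_{\mathrm{new}}\Bigl[\prod_{i=1}^P\phi_N(\lambda_i+\mu_i)-\prod_{i=1}^P\phi_N(\lambda_i)\phi_N(\mu_i)\Bigr].
\]
The crucial observation is that when one takes the logarithm of the ratio of the two products and expands via Lemma~\ref{lem:truncbound}\ref{trunc:charfunc}, the linear $-\iunit\lambda/N$ contributions cancel identically and the leading non-trivial term is the cross-$\alpha/2$ expression $(c\sigma_N^{\alpha}/N^{\alpha/2})\bigl[(\iunit(\lambda+\mu))^{\alpha/2}-(\iunit\lambda)^{\alpha/2}-(\iunit\mu)^{\alpha/2}\bigr]$. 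Using the elementary bound $|(\iunit(a+b))^{\alpha/2}-(\iunit a)^{\alpha/2}-(\iunit b)^{\alpha/2}|\leq C(|a||b|)^{\alpha/4}$, which for $\alpha/2\in(1,2)$ follows from the mixed-partial representation of this quantity, together with $|\phi_N(\cdot)|\leq 1$ on the relevant half-plane, the modulus of the bracket in the display is bounded by a constant multiple of $N^{-\alpha/2}\sum_{i}(|\lambda_i||\mu_i|)^{\alpha/4}$.

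The resulting sum is controlled by Cauchy--Schwarz in $i$ together with the Ward identity $\sum_i|(\G_{N,k})_{ii}|^2\leq \Im\Tr\G_{N,k}/|\Im z|\leq P/|\Im z|^2$, applied also to $\underline{\G_{N,k}}(w)$, and interpolated with the sign relation above. With $P/N\to y$ and the $N^{\alpha/2-1}$ prefactor, this delivers a pointwise estimate on $|\mathcal L^0_{N,k}(z,t,w,s)|$ of order $e^{-t|\Im z|-s|\Im w|}\,(ts)^{\alpha/4-1}(|z||w||\Im z||\Im w|)^{\alpha/4}$. Applying Cauchy's inequality on bidiscs of radii $|\Im z|/2$ and $|\Im w|/2$, on which $\mathcal L^0_{N,k}$ is analytic in $(z,w)$ and $|\Im z'|\asymp|\Im z|$ (exactly as in the proof of the preceding lemma), produces the factor $4/(|\Im z||\Im w|)$ that converts the exponents on $|\Im z|,|\Im w|$ from $\alpha/4$ to $\alpha/4-1$ and halves the exponential rate, giving $\mathcal F$ in the stated form.

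The main obstacle, and what distinguishes this lemma from the cruder bound stated just before it, is obtaining the correct positive power $(|\Im z||\Im w|)^{\alpha/4}$ rather than the negative power $(|\Im z||\Im w|)^{-\alpha/4}$ that the naive estimate $|G_{ii}|\leq|\Im z|^{-1}$ would produce. The improvement requires combining the sign constraint $\sgn_{\Im z}\Im(zG_{ii})\leq 0$ (so that $|\lambda_i|$ in the effective contribution is replaced by $|\Im(zG_{ii})|$) with the Ward identity, which, averaged via the trace formula, supplies the missing factor. Justifying that the remainder $\mathfrak E_N$ in Lemma~\ref{lem:truncbound}\ref{trunc:charfunc} is uniformly dominated by the leading $\alpha/2$-term on the cut domain---precisely the purpose for which the cut was introduced---then closes the argument.
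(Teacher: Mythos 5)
Your core argument coincides with the paper's: factor the $\x_k$-expectation into products of $\phi_N$ evaluated at $tu_i$ and $sv_i$ (the sign condition of Lemma~\ref{lemma:imsgn} placing the arguments where Lemma~\ref{lem:truncbound}, part~\ref{trunc:charfunc}, applies), telescope the difference of the two products using $|\phi_N|\leq 1$, note that the linear terms of the expansion cancel so that each single-factor difference is $O\bigl(N^{-\alpha/2}(t|u_i|)^{\alpha/4}(s|v_i|)^{\alpha/4}\bigr)$, and finish with Cauchy's inequality on bidiscs of radii $|\Im z|/2$, $|\Im w|/2$. The paper obtains the single-factor estimate by a Cauchy--Schwarz inequality for the covariance of one pair (reducing to $\phi_N(2t\iunit\Im u_i)-|\phi_N(tu_i)|^2$) rather than your elementary bound on $(\iunit(a+b))^{\alpha/2}-(\iunit a)^{\alpha/2}-(\iunit b)^{\alpha/2}$; the two routes are interchangeable, and the role you assign to the cut domain (making the $\mathfrak{E}_N$ remainder subordinate to the $\alpha/2$-term) is exactly its role in the paper.

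The genuine problem is your final paragraph. The pre-Cauchy bound you announce, with the \emph{positive} power $(|\Im z||\Im w|)^{\alpha/4}$, is false, and no combination of the Ward identity with the sign constraint can produce it: the diagonal resolvent entries converge to $m_y(z)$, so $\frac{1}{N}\sum_i(|u_i||v_i|)^{\alpha/4}$ is of order $|zm_y(z)\,wm_y(w)|^{\alpha/4}$, which does not tend to $0$ as $\Im z\to 0$ near the bulk, whereas your claimed bound would. Concretely, the Ward identity only gives $\sum_i|(\G_{N,k})_{ii}|^2\leq P/|\Im z|^2$, and H\"older then reproduces exactly the naive estimate $\sum_i|u_i|^{\alpha/2}\leq P|z|^{\alpha/2}/|\Im z|^{\alpha/2}$ --- there is no gain. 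What has misled you is a sign slip in the exponent of the stated $\mathcal{F}$: it should read $|\Im z|^{-\alpha/4-1}$ and $|\Im w|^{-\alpha/4-1}$, as both the paper's own proof (which uses only $|(\G_{N,k})_{ii}|\leq 1/|\Im z|$) and the subsequent computation bounding the contribution to $D(z,w)$ by $O(1)\,|z|^{\alpha/4}|w|^{\alpha/4}|\Im z|^{-\alpha/2-1}|\Im w|^{-\alpha/2-1}$ confirm. With the negative exponent the ``improvement'' you try to engineer is unnecessary: the naive diagonal bound, summed over $P\sim yN$ indices against the prefactor $N^{\alpha/2-1}/(N^{\alpha/2}\,ts)$ and the factor $e^{-t|\Im z|-s|\Im w|}$, already yields an $(t,s)$-integrable dominating function, which is all the Dominated Convergence argument requires.
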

\begin{proof}

\begin{multline}
\Cov_{\mathbf{x}_k}\left[\frac{\exp\left(\sgn_{\Im z}\iunit t f_{N, k}(z)\right)}{t}, \frac{\exp\left(\sgn_{\Im w} \iunit s \underline{f_{N, k}}(w)\right)}{s}\right]=\\ \frac{\prod_1^P\phi_N(t u_i+s v_i) -\prod_1^P\phi_N(t u_i)\phi_N (s v_i)}{ts},
\label{eqn:expansion}
\end{multline}

where $u_i: = \sgn_{\Im z}  z \G_{N,k}(z)_{ii}$ and $v_i:= \sgn_{\Im w} w\underline{\G_{N,k}}(w)_{ii}.$ 
Denote 
\begin{equation}
    \ell^{(i)}_{N, k}(z, t, w, s):=N^{\alpha/2}\frac{\phi_N(t u_i+s v_i)-\phi_N(t u_i)\phi_N(s v_i)}{ts}
\end{equation}
and 
\begin{equation}
     r^{(i)}_{N, k}(z, t, w, s):=\prod_{j=1}^{i-1}\phi_N(t u_j+s v_j)\prod_{j=i+1}^{P}\phi_N(t u_j)\phi_N(s v_j).
\end{equation}
We can rewrite
\begin{equation}
    \mathcal{L}_{N, k}(z, t, w, s)=\frac{\exp\left(\iunit \sgn_{\Im z}tz+\iunit \sgn_{\Im w}sw\right)\sum_{i=1}^{P} \left(\ell^{(i)}_{N, k}(z, t, w, s)\times r^{(i)}_{N, k}(z, t, w, s)\right)}{N}.
\end{equation}
By Lemma \ref{lemma:imsgn} $\Im u_i < 0$ and $\Im v_i<0,$ therefore 
\begin{equation}
    \left|r^{(i)}_{N, k}(z, t, w, s)\right|\leq 1 .
\end{equation}
Next, we will prove that
\begin{equation}
    \left|\ell^{(i)}_{N, k}(z, t, w, s)\right|\leq O(1)~t^{\alpha/4-1}|z|^{\alpha/4}|\Im z|^{-\alpha/4}\times s^{\alpha/4-1}|w|^{\alpha/4}|\Im w|^{-\alpha/4}.
\end{equation}
By Cauchy-Schwartz inequality for complex random variables $X$ and $Y$ 
\begin{multline}
|\Cov_{\x_k}(X, Y)| = |\Exp_{\x_k}(X - \Exp_{\x_k} X, Y - \Exp_{x_k}Y)|\leq \\ \sqrt{\left(\Exp_{\x_k}X\overline{X} - \Exp_{\x_k} X\overline{\Exp_{\x_k}X}\right)\left(\Exp_{\x_k}Y\overline{Y} - \Exp_{\x_k} Y\overline{\Exp_{\x_k}Y}\right)}.
\end{multline}
Thus 
\begin{multline}
\left|\phi_N(tu_i+sv_i)-\phi_N(tu_i)\phi_N(sv_i)\right|\leq\\ \sqrt{\left(\phi_N(2t\iunit \Im u_i) - \phi_N(t u_i)\overline{\phi_N(t u_i)} \right)\left(\phi_N(2s\iunit \Im v_i) - \phi_N(s v_i)\overline{\phi_N(s v_i)} \right)}.
\end{multline}

The 5-th part of Lemma \ref{lem:truncbound} allows the following estimate for $t\leq t_0$:

$$\phi_N(2t\iunit \Im u_i) - \phi_N(t u_i)\overline{\phi_N(t u_i)} = O\left(\frac{|tz|^{\alpha/2}}{|\Im z|^{\alpha/2} N^{\alpha/2}}\right),$$
which leads to 
\begin{equation}
\left|\phi_N(tu_i+sv_i)-\phi_N(tu_i)\phi_N(sv_i)\right|\leq O\left(\frac{|tz|^{\alpha/4}|sw|^{\alpha/4}}{|\Im z|^{\alpha/4}|\Im w|^{\alpha/4}  N^{\alpha/2}}\right).
\label{eqn:smallestimate}   
\end{equation}

Thus, using the bounds on the support of $\mathcal{L}_{N, k}^0(z, t, w, s)$ we can conclude, that
\begin{multline}
    \left|\mathcal{L}^0_{N, k}(z, t, s, w)\right| \leq \\  O(1) \exp\left(-t|\Im z| - s |\Im w|\right)(t^{\alpha/4-1}|\Im z|^{-\alpha/4-1}\times s^{\alpha/4-1}|\Im w|^{-\alpha/4-1}).
\end{multline}

Applying Cauchy inequality (Lemma~\ref{lemma:cauchy_inequality}), we bound
\begin{multline}
    \left|\frac{\partial}{\partial z} \frac{\partial}{\partial w}\mathcal{L}^0_{N, k}(z, t, s, w)\right| \leq \\  O(1) \exp\left(\frac{-t|\Im z| - s |\Im w|}{2}\right)(t^{\alpha/4-1}|z|^{\alpha/4}|\Im z|^{-\alpha/4-1}\times s^{\alpha/4-1}|w|^{\alpha/4}|\Im w|^{-\alpha/4-1}).
\end{multline}
\end{proof}
Notice, that
\begin{multline}
    \int_0^{\infty} \int_0^{\infty} \exp\left(\frac{-t|\Im z| - s |\Im w|}{2}\right)(t^{\alpha/4-1}|\Im z|^{\alpha/4-1}\times s^{\alpha/4-1}|\Im w|^{\alpha/4-1}) \diff t \diff s \leq  \\ 8 \frac{1}{|\Im z||\Im w|}\Gamma(\alpha/4)^2, 
\end{multline}

which means, that the input of $\mathcal{L}^0_{N, k}$ into $D(z, w)$ does not exceed $O(1)\frac{|z|^{\alpha/4}|w|^{\alpha/4}}{|\Im z|^{\alpha/2+1}|\Im w|^{\alpha/2+1}},$ which finishes the proof of the bound \eqref{eqn:bound}.

\subsubsection{Proof of convergence \eqref{eqn:convergence_cut}}

Recalling the proof of Lemma \ref{lemma:uniform_bounded}, the limit \eqref{eqn:convergence_cut} will follow from the Lemma below. 
\begin{lemma}
Fix $0<t_1<t_2$ and $0<s_1<s_2$ and $z, w \in \C\backslash\R$
\begin{equation}
     \max_{\substack{t\in[t_1, t_2]\\s\in[s_1, s_2]}} \left|\mathcal{L}_{N, k}(z, t, w, s)-\mathcal{L}(z, t, w, s)\right|  \underset{N \rightarrow \infty}{\rightarrow} 0.    
\end{equation}
uniformly on $k$ in probability.
\label{lemma:convergence}
\begin{proof}
Using expansion \eqref{eqn:expansion}, by Lemma \ref{lemma:convergenceofaverages} it is enough to prove, that 
\begin{equation}
    \max_{\substack{t\in[t_1, t_2]\\s\in[s_1, s_2]}}\left[\ell ^{(i)}_{N, k}(z, t, w, s)-\ell(z, t, w, s)\right] \underset{N \rightarrow \infty}{\rightarrow} 0,
    \label{eqn:ellconv}
\end{equation}
and 
\begin{equation}
    \max_{\substack{t\in[t_1, t_2]\\s\in[s_1, s_2]}}\left[r^{(i)}_{N, k}(z, t, w, s) -r(z, t, w, s)\right]\underset{N \rightarrow \infty}{\rightarrow} 0. 
    \label{eqn:rconv}
\end{equation}
in probability uniformly over $i, k.$
Firstly, we will prove the limit \eqref{eqn:ellconv}.
Using the part 5 of Lemma \ref{lem:truncbound} we get, the asymptotic decomposition below:
\begin{multline}
 \frac{ts}{N^{\alpha/2}} \times \ell ^{(i)}_{N, k}(z, t, w, s) =\phi_N(tu_i+sv_i)-\phi_N(tu_i)\phi_N(sv_i)\\=
1-\frac{(tu_i+sv_i)}{N}+c\frac{\left(tu_i+sv_i\right)^{\alpha/2}}{N^{\alpha/2}}+\frac{\left|tu_i+sv_i\right|^{\alpha/2}}{N^{\alpha/2}}\mathfrak{E}_N\left(\frac{tu_i+sv_i}{N}\right)\\-
\left(1-\frac{tu_i}{N}+c\frac{\left(tu_i\right)^{\alpha/2}}{N^{\alpha/2}}+\frac{\left|tu_i\right|^{\alpha/2}}{N^{\alpha/2}}\mathfrak{E}_N\left(\frac{tu_i}{N}\right)\right)\\ \times \left(1-\frac{sv_i}{N}+c\frac{\left(sv_i\right)^{\alpha/2}}{N^{\alpha/2}}+\frac{\left|sv_i\right|^{\alpha/2}}{N^{\alpha/2}}\mathfrak{E}_N\left(\frac{sv_i}{N}\right)\right)\\=
\frac{1}{N^{\alpha/2}}c\left(\left(tu_i+sv_i\right)^{\alpha/2}-tu_i^{\alpha/2}-sv_i^{\alpha/2}\right)+o(1)\frac{1}{N^{\alpha/2}},
\end{multline}
and $o(1)\leq c_N(z, t_1, t_2, w, s_1, s_2) \underset{N \to \infty}{\rightarrow} 0.$
\begin{remark}
    For the fixed $z, t_1, t_2, w, s_1, s_2$ the order of convergence of $c_N$ can be estimated from the top through the $\max$ of $\mathfrak{E}_N(\cdot)$ on the half-ball with centre in 0 and radius $\frac{t_2}{N\left|\Im z\right|}+\frac{s_2}{N\left|\Im w\right|}.$
\end{remark}
We can make further expansion:
\begin{multline*}\left(tu_i+sv_i\right)^{\alpha/2}-\left(tu_i\right)^{\alpha/2}-\left(sv_i\right)^{\alpha/2}=\\
\left(K(t, z)+K(s, w)\right)^{\alpha/2}-K(t, z)^{\alpha/2}-K(s, w)^{\alpha/2} + \Delta_{i, k}(z, t, w, s),
\end{multline*}
where 
\[
\Bigl|\Delta_{i, k}(z, t, w, s)\Bigr|\leq 4 (t_2+s_2)^{\alpha/2}\biggl(\Bigl|z(\G_{N, k})_{ii}(z)-zm_y(z)\Bigr|+\Bigl|w(\underline{\G_{N, k}})_{ii}(w)-zm_y(w)\Bigr|\biggr).
\] The random variable $\left(\G_{N, k}\right)_{ii}(z)$ uniformly on $(i, k)$ converges in probability to $m(z).$

Next, we prove convergence \eqref{eqn:rconv}. \begin{multline}
r_{N, k}^{(i)}(z, t, w, s)= \prod_{j=1}^{i-1}\phi_N(u_j+v_j)\prod_{j=i+1}^{P}\phi_N(u_j)\phi_N(v_j)=\\
\exp\left(\sum_{j=1}^{i-1}\ln \phi_N(u_j+v_j)+\sum_{j=i+1}^{P}\left[\ln \phi_N(u_j)+\ln \phi_N(v_j)\right]\right).
\end{multline}
When $|z|<\frac{1}{10}.$ $|\ln(1+z)-z|<10 |z|^3$ 
For bounded $u$
\[\ln \phi_N(u)=-\frac{\iunit u}{N}+O\left(\frac{1}{N^{\alpha/2}}\right),\]
which leads to 
\[
\prod_{j=1}^{i-1}\phi_N(u_j+v_j)\prod_{j=i+1}^{P}\phi_N(u_j)\phi_N(v_j)=\exp\left(-\iunit \frac{\sum_{j=1, j \neq i}^P(u_j+v_j)}{N}\right)(1+o(1)).
\]
Recalling the definitions of $u_i$ and $v_i$ we get that
\begin{multline*}
-\iunit \frac{\sum_{j=1, j \neq i}^P(u_j+v_j)}{N}=-\iunit t\sgn_{\Im z}\frac{1}{N}z\Tr\G_{N, k}(z)-\iunit s\sgn_{\Im w}\frac{1}{N}w\Tr\underline{\G_{N, k}}(w) \\
= -yK(z, t)-y K(s, w) -\iunit t z \sgn_{\Im z} M_{N}(z)- \iunit s w \sgn_{\Im w}\underline{M_{N}}(w)\\ +O\left(\frac{t|z|}{N|\Im z|}\right) + O\left(\frac{s|w|}{N|\Im w|}\right),
\end{multline*}
where $M(z):= \frac{1}{N}\Tr \G_N(z) - y m_y(z).$
Marchenko-Pastur law yields, that $M_{N}(z)\to 0$ in probability, as well as $\underline{M_{N}}(w).$ Thus, \eqref{eqn:rconv} will hold, and convergence, will obviously be uniform. 
\end{proof}

\end{lemma}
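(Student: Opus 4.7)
The plan is to start from the decomposition
\begin{equation*}
\mathcal{L}_{N,k}(z,t,w,s) = \frac{\exp\bigl(\iunit\sgn_{\Im z}tz+\iunit\sgn_{\Im w}sw\bigr)}{N}\sum_{i=1}^{P}\ell^{(i)}_{N,k}(z,t,w,s)\,r^{(i)}_{N,k}(z,t,w,s),
\end{equation*}
obtained by telescoping from the factorization \eqref{eqn:expansion} already derived inside the proof of Lemma~\ref{lemma:uniform_bounded}, where $u_j := \sgn_{\Im z}z(\G_{N,k}(z))_{jj}$, $v_j := \sgn_{\Im w}w(\underline{\G_{N,k}}(w))_{jj}$, the factor $\ell^{(i)}_{N,k} = (N^{\alpha/2}/(ts))\bigl[\phi_N(tu_i+sv_i)-\phi_N(tu_i)\phi_N(sv_i)\bigr]$ isolates the coordinate-$i$ increment and $r^{(i)}_{N,k}$ is the associated mixed product. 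The target $\mathcal{L}$ itself factors as $y\exp(\iunit\sgn_{\Im z}tz+\iunit\sgn_{\Im w}sw)\cdot\ell\cdot r$, with the prefactor $y$ arising from $P/N\to y$. Hence it suffices to prove that $\ell^{(i)}_{N,k}\to\ell(z,t,w,s)$ and $r^{(i)}_{N,k}\to r(z,t,w,s)$ uniformly in $i$ (in probability, uniformly in $k$ and in $(t,s)\in[t_1,t_2]\times[s_1,s_2]$) and then invoke Lemma~\ref{lemma:convergenceofaverages} to transfer the convergence to the Cesaro average.

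The limit of $\ell^{(i)}_{N,k}$ comes from plugging the expansion of Part~\ref{trunc:charfunc} of Lemma~\ref{lem:truncbound} into the difference $\phi_N(tu_i+sv_i)-\phi_N(tu_i)\phi_N(sv_i)$: the constants and the $O(N^{-1})$ linear-in-$\lambda$ pieces cancel, the bilinear $(tu_i)(sv_i)/N^2$ cross term produced by the product is negligible compared with $N^{-\alpha/2}$ since $\alpha<4$, and what survives is exactly the $N^{-\alpha/2}$ fractional-power contribution. Multiplying by $N^{\alpha/2}/(ts)$ one obtains
\begin{equation*}
\ell^{(i)}_{N,k} \;=\; \frac{c}{ts}\bigl[(\iunit tu_i+\iunit sv_i)^{\alpha/2}-(\iunit tu_i)^{\alpha/2}-(\iunit sv_i)^{\alpha/2}\bigr] + o(1),
\end{equation*}
with the $o(1)$ controlled by uniform smallness of $\mathfrak{E}_N$ near $0$ (its arguments $(tu_i+sv_i)/N$ are bounded by a fixed multiple of $(t_2|z|/|\Im z|+s_2|w|/|\Im w|)/N$ via Lemma~\ref{lemma:diagest}). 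Combined with $\iunit tu_i\to K(z,t)$ and $\iunit sv_i\to K(w,s)$, this gives $\ell^{(i)}_{N,k}\to\ell(z,t,w,s)$. For $r^{(i)}_{N,k}$ I would take logarithms and use $\log\phi_N(\lambda)=-\iunit\lambda/N+O(N^{-\alpha/2})$ for bounded $\lambda$ to collapse the sum over $j\neq i$ into
\begin{equation*}
\log r^{(i)}_{N,k} \;=\; -\iunit\sgn_{\Im z}\frac{tz}{N}\Tr\G_{N,k}(z) - \iunit\sgn_{\Im w}\frac{sw}{N}\Tr\underline{\G_{N,k}}(w) + o(1),
\end{equation*}
after which Proposition~\ref{prop:mplaw} together with the standard rank-one stability $|\tfrac{1}{N}\Tr\G_{N,k}-\tfrac{1}{N}\Tr\G_N|\leq 1/(N|\Im z|)$ delivers $r^{(i)}_{N,k}\to\exp(-yK(z,t)-yK(w,s))=r(z,t,w,s)$.

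The main obstacle is the input used in both of these steps: that the individual diagonal resolvent entries $(\G_{N,k}(z))_{ii}$ converge in probability to $m_y(z)$ uniformly in $i$ and $k$. Whereas $\tfrac{1}{P}\Tr\G_{N,k}\to m_y(z)$ is an immediate consequence of Proposition~\ref{prop:mplaw}, controlling \emph{individual} entries is substantially more delicate in the half-heavy-tailed regime, where no isotropic local law is available. My plan is a second-moment bound on the fluctuation $(\G_{N,k})_{ii}-\tfrac{1}{P}\Tr\G_{N,k}$ that exploits the row-exchangeability of $\hat\X_N$ (reducing all $i$ to $i=1$) combined with resolvent/martingale identities for sample covariance matrices, followed by Chebyshev and a union bound over $(i,k)\in\{1,\ldots,P\}\times\{1,\ldots,N\}$; uniformity in $(t,s)$ over the compact rectangle is then automatic because the dependence on $(t,s)$ is through holomorphic functions of arguments that remain uniformly bounded.
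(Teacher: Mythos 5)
Your proposal follows essentially the same route as the paper: the same telescoping decomposition into $\ell^{(i)}_{N,k}\, r^{(i)}_{N,k}$, the same reduction via Lemma~\ref{lemma:convergenceofaverages}, the same expansion of $\phi_N$ from Part~\ref{trunc:charfunc} of Lemma~\ref{lem:truncbound} with the same cancellation structure for $\ell^{(i)}_{N,k}$, and the same logarithmic argument for $r^{(i)}_{N,k}$. The one place you go beyond the paper is in flagging the uniform-in-$(i,k)$ convergence of the individual diagonal entries $(\G_{N,k}(z))_{ii}\to m_y(z)$ as the delicate input: the paper simply invokes this (via an appendix lemma stated without proof), whereas your exchangeability-plus-second-moment plan, if carried out, would actually substantiate that step.
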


\section{Proof of the Theorem \ref{thm:covkernrewrite}}
\label{sec:covcalculation}
To calculate the integral from Theorem \ref{thm:main} we will follow \cite{LM22}.

Using Lemma \ref{lemma:raising_into_power} it is possible to rewrite the following equation as an integral:

\begin{multline}
 \left(K(z, t)+K\left(w, s\right)\right)^{\alpha / 2}-K(z, t)^{\alpha/2}-K(w, s)^{\alpha/2}\\
 =  \frac{1}{\Gamma(-\alpha/2)} \int_0^{\infty} \frac{ \left(\exp\left(-rK(z, t)\right)-1\right)\left(\exp\left(-r K(w, s)\right) - 1\right)}{r^{\frac{\alpha}{2}+1}} \mathrm{d} r.
 \label{eqn:rrewriting}
 \end{multline}

 Thus, denoting 
 \begin{equation}
     \mathfrak{k}(z, t, r):=\frac{\left(\exp\left(-rK(z, t)\right)-1\right)\exp(\sgn_z\iunit t z - y K(z, t))}{t},
 \end{equation}
  it is possible to rewrite 

  \begin{equation}
      \mathcal{L}(z, t, w, s) = yc \frac{1}{\Gamma(-\alpha/2)} \int _0^\infty \frac{1}{r^{\frac{\alpha}{1}+1}}  \mathfrak{k}(z, t, r)  \mathfrak{k}(w, s, r) \diff r.
  \end{equation}

 and 
\begin{multline}
      C(z, w) = \int_0^\infty \int_0^\infty \frac{\partial}{\partial z}\frac{\partial}{\partial w}\mathcal{L}(z, t, w, s) \diff s \diff t= \\ yc \frac{1}{\Gamma(-\alpha/2)} \int _0 ^\infty \int _0 ^\infty \int _0^\infty \frac{1}{r^{\frac{\alpha}{1}+1}}  \frac{\partial}{\partial z}\mathfrak{k}(z, t, r)  \frac{\partial}{\partial w} \mathfrak{k}(w, s, r) \diff r \diff s \diff t = \\ \\ yc \frac{1}{\Gamma(-\alpha/2)} \int _0 ^\infty  \frac{1}{r^{\frac{\alpha}{2}+1}}  \frac{\partial}{\partial z}\left(\int_0^\infty \mathfrak{k}(z, t, r) \diff t \right) \frac{\partial}{\partial w} \left( \int_0^\infty \mathfrak{k}(w, s, r) \diff s\right) \diff r .
      \label{eqn:rewritingwithr}
  \end{multline}
 
% From \eqref{eqn:mpstieltj} the following equation for $m_y(z)$
% \begin{equation}
%     z - yzm_y(z) = \frac{z}{(1-y)+ z y m_y(z)}. 
% \end{equation}
% This way, 
% \begin{multline}
% \mathcal{L}(t, s, z, w) =  y c \frac{1}{\Gamma[-\alpha/2]} \int_0 ^ \infty \frac{1}{r^{\frac{\alpha}{2}+1}} \times \\
% \frac{1}{t}\left[\exp\left(\operatorname{sgnIm}(z)it\left(- rzm_y(z)+ \frac{z}{(1-y)+ z y m_y(z)}\right)\right) - \exp \left( \frac{z \operatorname{sgnIm}(z)it }{(1-y)+ z y m_y(z)}\right)\right]\times \\
% \frac{1}{s}\left[\exp\left(\operatorname{sgnIm}(w)is\left( - rwm_y(w)+ \frac{w}{(1-y)+ w y m_y(w)}\right)\right) - \exp \left( \frac{w \operatorname{sgnIm}(w)is }{(1-y)+ w y m_y(w)}\right)\right] \mathrm{d} r 
% \end{multline}
% This formula allows us to separate integral by $s$ and integral by $t$. 

 Lemma \ref{lemma:subtract} allows us to conclude, that 
\begin{equation}
    \int_0^\infty \mathfrak{k}(z, t, r) \diff t =\\
    -\log \left( 1-\frac{rm_y(z)}{1-ym_y(z)} \right).
    \label{eqn:compkfrak}
\end{equation}
Recalling equation \eqref{eqn:mpstieltj},
\begin{equation}
    \frac{1}{1 - ym_y(z)} = (1-y)+ z y m_y(z). 
\end{equation}
Substituting it into \eqref{eqn:compkfrak} we get, that
\begin{equation}
    \int_0^\infty \mathfrak{k}(z, t, r) \diff t =\\
    -\log \left( 1-rm_y(z)\left((1-y)+ z y m_y(z)\right) \right).
\end{equation}
Similarly, by \eqref{eqn:mpstieltj} holds  $m_y(z)\left((1-y)+zym_y(z)\right) = zm_y(z) -1 ,$  so we can simplify
\begin{equation}
    \int_0^\infty \mathfrak{k}(z, t, r) \diff t =\\
    -\log \left( 1-r(zm_y(z) -1) \right),
\end{equation}
from where we can deduce
\begin{equation}
    \frac{\partial}{\partial z}\int_0^\infty \mathfrak{k}(z, t, r) \diff t =
    r\frac{ \frac{\partial}{\partial z}zm_y(z)}{1-r\left(zm_y(z) -1\right)}.
\end{equation}

Substituting this into \eqref{eqn:rewritingwithr} we get that

\begin{equation}
    C(z, w) = \frac{yc}{\Gamma(-\alpha/2)}\frac{\partial}{\partial z}zm_y(z) \frac{\partial}{\partial w}wm_y(w)\int _0 ^\infty  \frac{r^{1-\alpha/2}}{(1-r\left(zm_y(z) -1\right))(1-r\left(wm_y(w) -1\right))} \diff r.
    \label{eqn:almostthere}
\end{equation}
% \begin{multline}
%     \int_{t, s>0}  \mathcal{L}(t, s, z, w)\mathrm{d} t \mathrm{~d} s =  y c \frac{1}{\Gamma[-\alpha/2]} \int_0^{\infty} \frac{1}{r^{\frac{\alpha}{2}+1}} \times\\
%     \left[ \log\left(-rz m_y(z) +\frac{z}{(1-y)+ z y m_y(z)}\right) - \log \left(\frac{z}{(1-y)+ z y m_y(z)}\right) \right] \times \\
%      \left[ \log\left(-rw m_y(w) +\frac{w}{(1-y)+ w y m_y(w)}\right) - \log \left(\frac{w}{(1-y)+ w y m_y(w)}\right) \right] \mathrm{d} r.
% \end{multline}
%  To get the covariance kernel we need to differentiate this expression by $z$ and $w.$ As soon as variables $z$ and $w$ are now separated, it will follow from
%  \begin{equation}
%  \begin{aligned}
%  \frac{\partial}{\partial z} \left[\log\left(-rz m_y(z) +\frac{z}{(1-y)+ z y m_y(z)}\right) - \log\left(\frac{z}{(1-y)+ z y m_y(z)}\right) \right]= \\
%  - r \cdot \frac{1}{z} \cdot \frac{\frac{\partial}{\partial z}\left( m_y(z)\left( (1-y) + z y m_y(z)\right)\right)}{1- r \left(m_y(z)\left( (1-y) + z y m_y(z) \right)\right)}.
% \end{aligned}
%  \end{equation}

 The Lemma \ref{lemma:int_r} allows us to calculate 
 \begin{multline}
     \int _0 ^\infty  \frac{r^{1-\alpha/2}}{(1-r\left(zm_y(z) -1\right))(1-r\left(wm_y(w) -1\right))} \diff r  \\ = \frac{\pi}{\sin\left(\pi\frac{\alpha}{2}\right)}\frac{\left(-1+zm_y(z)\right)^{\alpha/2-1}- \left(-1+wm_y(w)\right)^{\alpha/2-1}}{zm_y(z) - w m_y(w)}.
 \end{multline}

 Combining it with \eqref{eqn:almostthere} and using $\frac{\pi}{\sin\left(\pi\frac{\alpha}{2}\right)} = \Gamma\left(-\frac{\alpha}{2}\right)\Gamma\left(1+\frac{\alpha}{2}\right),$ we conclude that
 
 \begin{multline}
    C(z, w) = -yc \Gamma\left(1+\frac{\alpha}{2}\right)\frac{\partial}{\partial z}\left(zm_y(z)\right)\frac{\partial}{\partial w}\left(wm_y(w)\right) \\ 
 \times \frac{\left(-1+zm_y(z)\right)^{\alpha/2-1}- \left(-1+wm_y(w)\right)^{\alpha/2-1}}{zm_y(z) - w m_y(w)}.
\end{multline}

\section{Proof of \ref{thm:theoremforoverlapping}}
\label{sec:overlapping}
In this section, we will show how to adapt the proof of Theorem \ref{thm:main} so that it works for overlapping half-heavy-tailed random matrices, where the number of overlapping rows and columns is proportional to $N.$ The proof of the truncation and diagonalization stages does not differ from one in previous sections. Nevertheless, to move to the computation of the limit we need to be careful with the number of non-zero terms in the martingale sum and with the number of resolvent diagonal elements involved.

Denote 
\begin{equation}
    Y_{k}^{[i]}(z) :=\frac{1}{ N^{1-\alpha/4}}\left(\mathbb{E}_{k}-\mathbb{E}_{k-1}\right)\Tr\G_{\mathbf{A}^{[i]}_N}(z).
\end{equation} 
Similarly to Section \ref{sec:mart_decomposition} it is enough to prove that 

\begin{equation}
    \sum_{k\in \mathcal{Q}_i \cap \mathcal{Q}_j}\mathbb{E}_{k-1}\left[Y_k^{[i]}(z) Y_k^{[j]}(w)\right]\to C_{i, j}(z, w).
\end{equation}
Denote $\left(\mathbf{x}_k\mid_{\mathcal{P}_i}\right)$ the projection of the vector $\mathbf{x}_k$ on coordinates out of the set $\mathcal{P}_i.$ Then,
``diagonalization" term $\tilde{Y}_k^i(z)$ can be defined the following way:
\begin{equation}
    \tilde{Y}_k^{[i]}(z):=\begin{cases} 0, &\textit{$k\notin\mathcal{Q}_i$}\\
    \frac{\partial}{\partial z} \frac{1}{N^{1-\alpha/4}} \left(\mathbb{E}_{k}-\mathbb{E}_{k-1}\right) \log\left|\Tilde{g}^{[i]}_{N, k}(z)\right|^2, &\textit{otherwise},
                        \end{cases}
\end{equation}
where $\Tilde{g}^{[i]}_{N, k}(z):=z- \frac{1}{N}\left(\mathbf{x}_k\mid_{\mathcal{P}_i}\right)^* z \operatorname{diag}\left[\mathbf{G}^{[i]}_{N, k}(z)\right] \left(\mathbf{x}_k\mid_{\mathcal{P}_i}\right).$
Further, for $k\in \mathcal{Q}_i \cap \mathcal{Q}_j$ we will denote
\begin{multline}
    \mathcal{L}^{[i, j]}_{N, k}(z, t, w, s)\\ := \frac{\left|\mathcal{Q}_i \cap \mathcal{Q}_j\right|}{N}\cdot N^{\alpha/2-1}\frac{\Cov_{\mathbf{x}_k}\left[\exp\left(\sgn \Im z \iunit t \tilde{g}^{[i]}_{N, k}(z)\right), \exp\left(\sgn \Im w \iunit s \underline{\tilde{g}^{[j]}_{N, k}}(w)\right)\right]}{ts}\\=
    \frac{\left|\mathcal{Q}_i \cap \mathcal{Q}_j\right|}{N^2}\exp\left(\iunit \sgn_{\Im z}tz+\iunit \sgn_{\Im w}sw\right)\\ \times \sum_{m\in \mathcal{P}_i\cap\mathcal{P}_j} \left[\ell^{(m), [i, j]}_{N, k}(z, t, w, s)\times r^{(m)[i, j]}_{N, k}(z, t, w, s)\right],
\end{multline}

where
\begin{gather}
    u^{[i]}_m:= - \sgn_{\Im z} \iunit z \left(\mathbf{G}^{[i]}_{N, k}(z)\right)_{m, m}\\
    v^{[j]}_m:= - \sgn_{\Im z} \iunit w \left(\underline{\mathbf{G}^{[j]}_{N, k}}(w)\right)_{m, m}\\
    \ell^{(m) [i, j]}_{N, k}(z, t, w, s):=N^{\alpha/2}\frac{\phi_N(t u^{[i]}_m+s v^{[j]}_m)-\phi_N(t u^{[i]}_m)\phi_N(s v^{[j]}_m) }{ts} ,
\end{gather}
and
\begin{multline}
    r^{(m)[i, j]}_{N, k}(z, t, w, s) := \prod_{\substack{n<m \\n\in \mathcal{P}_i\cap\mathcal{P}_j} }\phi_N(t u^{[i]}_n+s v^{[j]}_n)\prod_{\substack{n>m\\ n\in \mathcal{P}_i\cap\mathcal{P}_j}}\phi_N(t u^{[i]}_n)\phi_N(s v^{[j]}_n)\\ \times \prod_{n\in \mathcal{P}_i\backslash \mathcal{P}_j}\phi_N(t u^{[i]}_n) \prod_{n\in \mathcal{P}_j\backslash \mathcal{P}_i}\phi_N(s v^{[j]}_n) .
\end{multline}
As in Subection \ref{sec:computation_of_limit}, we see that
\begin{equation}
    \ell^{(m) [i, j]}_{N, k}(z, t, w, s) \to c \frac{\left(K^{[i]}(z, t)+K^{[j]}(w, s)\right)^{\alpha/2}-K^{[i]}(z, t)^{\alpha/2}-K^{[j]}(w, s)^{\alpha/2}}{ts}
\end{equation}
and 
\begin{equation}
    r^{(m)[i, j]}_{N, k}(z, t, w, s) \to \exp\left(-p_iK^{[i]}(z, t) -p_jK^{[j]}(w, s)\right),
\end{equation}
where $K^{[i]}(z, t)=t\sgn_z\iunit z s^{[i]}(z), $ and $s^{[i]}(z)$ denotes the limit of the diagonal elements of the resolvent of $\BA_N^{[i]}.$
Notice, that
$$
z - \BA^{[i]}_N = z  - \frac{\X_N^{[i]}\X_N^{[i]*}}{N} = z  - \frac{\left|\mathcal{Q}_i\right|}{N}\cdot\frac{\X_N^{[i]}\X_N^{[i]*}}{\left|\mathcal{Q}_i\right|}=\frac{\left|\mathcal{Q}_i\right|}{N}\left(z\frac{N}{\left|\mathcal{Q}_i\right|}-\frac{\X_N^{[i]}\X_N^{[i]*}}{\left|\mathcal{Q}_i\right|}\right).
$$
Thus, 
$$s^{[i]}(z) = \frac{1}{q_i}m_{\frac{p_i}{q_i}}\left(\frac{z}{q_i}\right).$$

% \begin{multline}
%      C(z, w) =  yc\cdot \frac{1}{zw} \cdot \frac{1}{\Gamma\left[-\alpha/2\right]}\cdot\frac{\pi}{\sin\left(\frac{\pi\alpha}{2}\right)}\cdot \frac{\partial}{\partial z}\left( m_y(z)\left( (1-y) + z y m_y(z)\right)\right) \times\\
%      \frac{\partial}{\partial w}\left( m_y(w)\left( (1-y) + w y m_y(w)\right)\right)\times \\
%    \frac{\left( m_y(z)\left( (1-y) + z y m_y(z)\right)\right)^{\alpha/2-1} - \left( m_y(w)\left( (1-y) + w y m_y(w)\right)\right)^{\alpha/2-1}}{ m_y(z)\left( (1-y) + z y m_y(z)\right) -  m_y(w)\left( (1-y) + w y m_y(w)\right)}
% \end{multline}

% One can easily check, that 

% $m_y(z)\left((1-y)+zym_y(z)\right) = zm_y(z) -1 .$ Also, by the property of Gamma function $\frac{\pi}{\sin\left(\pi\frac{\alpha}{2}\right)} = \Gamma\left(-\frac{\alpha}{2}\right)\Gamma\left(1+\frac{\alpha}{2}\right).$ Thus, we can simplify the kernel the following way:

% \begin{equation}
%     C(z, w) = -yc \frac{1}{zw} \Gamma\left(1+\frac{\alpha}{2}\right)\frac{\partial}{\partial z}\left(zm_y(z)\right)\frac{\partial}{\partial w}\left(wm_y(w)\right)\frac{\left(-1+zm_y(z)\right)^{\alpha/2-1}- \left(-1+wm_y(w)\right)^{\alpha/2-1}}{zm_y(z) - w m_y(w)}.
% \end{equation}

 \appendix
\section{Appendix}
We use the following rank inequalities adapted from \cite[Theorem~A.44]{BS10}, where proofs may be found.

\begin{lemma}\cite[Theorem~A.44]{BS10}
\label{lem:wishartdiff}
Let $\X_N$ and $\hat{\X}_N$ be two $P\times N$ complex matrices and let 
$F(\cdot)$ and $\hat{F}(\cdot)$ be the cumulative distribution functions of the
the empirical spectral measures of $\X_N\X_N^*$ and $\hat{\X}_N\hat{\X}_N^*$ respectively:
\begin{equation*}
\begin{aligned}
 F(x) := \frac{\#\big\{j : \lambda_j\left(\frac{\X_N\X_N^*}{N}\right) \leq x\big\}}{P} ,\\ 
\hat{F}(x) := \frac{\#\big\{j : \lambda_j\left(\frac{\hat{\X}_N\hat{\X}_N^*}{N} \right) \leq x\big\}}{P} 
\end{aligned}
\qquad x \in \R .
\end{equation*}

Then the following inequality holds
\[
\sup_{x\in \R} \left|F(x) - \hat{F}(x)\right| \leq 
\frac{1}{P}\rank(\X_N-\hat{\X}_N).
\]
\end{lemma}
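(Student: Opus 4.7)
The plan is to reduce the statement to a singular value interlacing inequality with an index shift equal to $r := \rank(\X_N - \hat\X_N)$, and then to a straightforward counting argument. Set $E := \X_N - \hat\X_N$. Since the nonzero eigenvalues of $\X_N\X_N^*/N$ coincide with the squares of the nonzero singular values of $\X_N/\sqrt{N}$ (and analogously for $\hat\X_N$), and squaring is monotone on $[0,\infty)$, it is enough to establish the index-shift bound
\[
\sigma_{k+r}(\X_N) \leq \sigma_k(\hat\X_N) \qquad \text{for all } k \geq 1,
\]
where the $\sigma_k$ are in non-increasing order and extended by zero past $\min(P,N)$, together with the symmetric inequality obtained by exchanging $\X_N$ and $\hat\X_N$.

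For the interlacing I would invoke the Courant--Fischer min--max characterisation
\[
\sigma_k(A) = \max_{\substack{S \subseteq \C^N \\ \dim S = k}}\; \min_{\substack{v \in S \\ \|v\|=1}} \|A v\|.
\]
Let $K := \ker E$, so $\dim K \geq N - r$. For any $S \subseteq \C^N$ with $\dim S = k+r$, the dimension formula gives $\dim(S \cap K) \geq k$, and on $S \cap K$ one has $\X_N v = \hat\X_N v$. Then
\[
\min_{v \in S,\,\|v\|=1} \|\X_N v\|
\;\leq\; \min_{v \in S \cap K,\,\|v\|=1} \|\X_N v\|
\;=\; \min_{v \in S \cap K,\,\|v\|=1} \|\hat\X_N v\|
\;\leq\; \sigma_k(\hat\X_N),
\]
where the first step uses that a minimum over a larger set is smaller, and the last step uses the Courant--Fischer formula for $\sigma_k(\hat\X_N)$ applied to any $k$-dimensional subspace of $S \cap K$. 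Taking the maximum over all such $S$ delivers $\sigma_{k+r}(\X_N) \leq \sigma_k(\hat\X_N)$.

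It remains to translate the resulting interlacing of the eigenvalues $\lambda_j := \sigma_j(\X_N)^2/N$ of $\X_N\X_N^*/N$ and $\hat\lambda_j := \sigma_j(\hat\X_N)^2/N$ of $\hat\X_N\hat\X_N^*/N$ into a Kolmogorov bound on their ECDFs. For any $x \in \R$, write $n(x) := \#\{j : \lambda_j \leq x\} = PF(x)$ and $\hat n(x) := P\hat F(x)$. If $\hat n(x) = k$, then in the decreasing ordering the $k$ smallest values $\hat\lambda_{P-k+1} \geq \cdots \geq \hat\lambda_P$ all lie in $(-\infty,x]$, and the interlacing forces $\lambda_{P-k+1+r} \leq \hat\lambda_{P-k+1} \leq x$, so at least $k - r$ of the $\lambda_j$ are $\leq x$; hence $n(x) \geq \hat n(x) - r$, and by symmetry $|n(x) - \hat n(x)| \leq r$. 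Dividing by $P$ and taking a supremum in $x$ yields the claim. The only delicate point is keeping the direction of each inequality straight in the min--max chain; I do not foresee any genuine obstacle.
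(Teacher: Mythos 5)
Your argument is correct: the max--min characterisation gives the index-shifted singular value interlacing $\sigma_{k+r}(\X_N)\le\sigma_k(\hat\X_N)$ via the kernel of $E$, and the counting step correctly converts this (after squaring and dividing by $N$) into the Kolmogorov bound on the two ECDFs, with the edge cases $k\le r$ and $k+r>N$ holding trivially. The paper itself supplies no proof here --- it simply cites \cite[Theorem~A.44]{BS10} --- and your argument is essentially the standard one underlying that cited result, so there is nothing to reconcile.
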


The following Corollary of the above Lemma is used to truncate and
centre the original matrix $\X_N$.
\begin{corollary}
\label{cor:resolvrank}
Under the same assumptions as Lemma~\ref{lem:wishartdiff}, we have 
the following bound on the resolvent for all $z \in \C\backslash\R$,
\[
\left| \Tr\left[ \left(z  - \frac{\X \X^*}{N}\right)^{-1}\right]
- \Tr\left[ \left(z - \frac{\hat{\X} \hat{\X}^*}{N}\right)^{-1}\right]\right| \leq 
\frac{\pi}{|\Im z|}\rank(\X - \hat{\X}) .
\]
\end{corollary}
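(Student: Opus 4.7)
The plan is to express each resolvent trace as a Stieltjes transform of the corresponding empirical CDF, reduce the problem to bounding $|F-\hat F|$ in the uniform norm, and then apply the rank inequality from Lemma~\ref{lem:wishartdiff} together with an explicit evaluation of a Poisson-type integral.

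First I would write, using the spectral decomposition of the Hermitian matrices $\X\X^*/N$ and $\hat\X\hat\X^*/N$,
\begin{equation*}
\Tr\left(z-\frac{\X\X^*}{N}\right)^{-1}-\Tr\left(z-\frac{\hat\X\hat\X^*}{N}\right)^{-1}
= P\int_{\R}\frac{\diff F(x)-\diff\hat F(x)}{z-x}.
\end{equation*}
Since $F$ and $\hat F$ are both CDFs supported on a bounded subset of $[0,\infty)$ and $(z-x)^{-1}$ is smooth and decays at infinity, the boundary terms vanish and integration by parts yields
\begin{equation*}
P\int_{\R}\frac{\diff F(x)-\diff\hat F(x)}{z-x}
= P\int_{\R}\frac{F(x)-\hat F(x)}{(z-x)^2}\,\diff x.
\end{equation*}

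Next I would take absolute values, bring them inside the integral, and invoke Lemma~\ref{lem:wishartdiff}, which gives the uniform bound $|F(x)-\hat F(x)|\le \frac{1}{P}\rank(\X-\hat\X)$ for every $x\in\R$. This produces
\begin{equation*}
\left|\Tr\left(z-\tfrac{\X\X^*}{N}\right)^{-1}-\Tr\left(z-\tfrac{\hat\X\hat\X^*}{N}\right)^{-1}\right|
\le \rank(\X-\hat\X)\int_{\R}\frac{\diff x}{|z-x|^2}.
\end{equation*}
The remaining integral is explicit: writing $z=a+\iunit b$ with $b=\Im z\neq 0$,
\begin{equation*}
\int_{\R}\frac{\diff x}{(x-a)^2+b^2}=\frac{\pi}{|b|}=\frac{\pi}{|\Im z|},
\end{equation*}
which delivers the claimed inequality.

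There is no substantial obstacle; the only point that requires a moment of care is verifying that the boundary contributions in the integration by parts vanish. This follows because $F-\hat F$ is of bounded variation, equals $0$ for $x$ below the spectrum and equals $0$ for $x$ above both spectra (as $F(x)=\hat F(x)=1$ eventually), while $(z-x)^{-1}\to 0$ as $|x|\to\infty$, so both endpoint terms disappear and the identity above is rigorous.
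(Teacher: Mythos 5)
Your proposal is correct and follows essentially the same route as the paper's own proof: rewrite the trace difference as a Stieltjes integral against $F-\hat F$, integrate by parts, bound $\sup_x|F(x)-\hat F(x)|$ via Lemma~\ref{lem:wishartdiff}, and evaluate $\int_\R |z-\lambda|^{-2}\,\diff\lambda = \pi/|\Im z|$. Your extra remark on the vanishing boundary terms is a reasonable point of care that the paper leaves implicit.
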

\begin{proof}
Using equation \eqref{eqn:reslss} and integration by parts, we can get 
\begin{multline*}
\frac{1}{P}\left| \Tr\left[ \left(z  - \frac{\X \X^*}{N}\right)^{-1}\right]
- \Tr\left[ \left(z - \frac{\hat{\X} \hat{\X}^*}{N}\right)^{-1}\right]\right|  \\
= \left|\int_{-\infty}^{+\infty}\frac{1}{z-\lambda} \diff \left(F(\lambda)-\hat{F}(\lambda)\right)\right| = \left|\int_{-\infty}^{+\infty}\frac{F(\lambda)-\hat{F}(\lambda)}{(z-\lambda)^2}\diff \lambda\right| \\ \leq \int_{-\infty}^{+\infty}\frac{1}{\left|z-\lambda\right|^2}\diff \lambda \times \sup_{x\in \R} \left|F(x) - \hat{F}(x)\right|.
\end{multline*}
Note that for any $z\in \C\backslash\R$
\begin{equation*}
\int_{-\infty}^{+\infty}\frac{1}{\left|z-\lambda\right|^2}\diff \lambda =\int_{-\infty}^{+\infty}\frac{1}{\left(\Re z-\lambda\right)^2 +\Im z ^2} \diff \lambda = \int_{-\infty}^{+\infty}\frac{1}{x^2 +\Im z ^2}  \diff x = \frac{\pi}{\Im z}.
\end{equation*}
Combing equations above with Lemma~\ref{lem:wishartdiff} we get the statement of the Corollary. 
\end{proof}

\begin{lemma} Let $\X$ be a $T\times S$ matrix, and $\x_i$ is its $i$-th column. Then 
\begin{equation}
    \left(\frac{1}{z - \X^*\X}\right)_{ii}=\frac{1}{z-\x_i^*\frac{z}{z-\left(\X\X^*-\x_i\x^*_i\right)}\x_i}
\end{equation}
\label{lemma:diagonalsttransform}
\end{lemma}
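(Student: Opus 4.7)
The plan is to combine the Schur-complement formula with the rectangular push-through identity. First, after permuting columns of $\X$ so that $\x_i$ is moved to the first position, write $\X = [\x_i \mid \mathbf{Y}]$ where $\mathbf{Y}$ is the $T\times(S-1)$ submatrix of remaining columns. In block form,
\[
\X^*\X = \begin{pmatrix} \x_i^*\x_i & \x_i^*\mathbf{Y} \\ \mathbf{Y}^*\x_i & \mathbf{Y}^*\mathbf{Y} \end{pmatrix},
\]
and the Schur-complement formula applied to the scalar top-left block of $z\,\Id - \X^*\X$ yields
\[
\bigl((z - \X^*\X)^{-1}\bigr)_{ii} = \frac{1}{(z - \x_i^*\x_i) - \x_i^*\mathbf{Y}(z - \mathbf{Y}^*\mathbf{Y})^{-1}\mathbf{Y}^*\x_i}.
\]

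Next I would rewrite the quadratic form in the denominator by means of the push-through identity. The trivial observation $(z\,\Id - \mathbf{Y}\mathbf{Y}^*)\mathbf{Y} = \mathbf{Y}(z\,\Id - \mathbf{Y}^*\mathbf{Y})$ gives $\mathbf{Y}(z - \mathbf{Y}^*\mathbf{Y})^{-1} = (z - \mathbf{Y}\mathbf{Y}^*)^{-1}\mathbf{Y}$, so
\[
\x_i^*\mathbf{Y}(z - \mathbf{Y}^*\mathbf{Y})^{-1}\mathbf{Y}^*\x_i = \x_i^*(z - \mathbf{Y}\mathbf{Y}^*)^{-1}\mathbf{Y}\mathbf{Y}^*\x_i = \x_i^*\bigl(z(z - \mathbf{Y}\mathbf{Y}^*)^{-1} - \Id\bigr)\x_i,
\]
where in the last step I used $\mathbf{Y}\mathbf{Y}^* = z\,\Id - (z\,\Id - \mathbf{Y}\mathbf{Y}^*)$.

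Finally, I would recognize that $\mathbf{Y}\mathbf{Y}^* = \X\X^* - \x_i\x_i^*$. Substituting back and cancelling the $-\x_i^*\x_i$ produced by the $-\Id$ term against the $-\x_i^*\x_i$ already present in the Schur complement, the denominator collapses to
\[
z - \x_i^*\, \frac{z}{z - (\X\X^* - \x_i\x_i^*)}\, \x_i,
\]
which is precisely the stated formula. I expect no real obstacle; the only care needed is bookkeeping around the permutation and keeping the sizes of the two sides of the push-through identity straight. Invertibility of $z - \X^*\X$, $z - \mathbf{Y}^*\mathbf{Y}$ and $z - \mathbf{Y}\mathbf{Y}^*$ is automatic for any $z$ outside the spectra of these Hermitian matrices, and in particular for all $z \in \C\setminus\R$ in which the lemma is applied in the main text.
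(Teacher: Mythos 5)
Your proof is correct and complete. The paper states this lemma in the appendix without proof (it is the standard identity underlying the rewriting of $Y_k(z)$ via \cite[p.54]{BS10}), and your argument — Schur complement on the scalar block of $z\,\Id-\X^*\X$, followed by the push-through identity $\mathbf{Y}(z-\mathbf{Y}^*\mathbf{Y})^{-1}=(z-\mathbf{Y}\mathbf{Y}^*)^{-1}\mathbf{Y}$ and the observation $\mathbf{Y}\mathbf{Y}^*=\X\X^*-\x_i\x_i^*$ — is exactly the intended derivation; the cancellation of the two $\x_i^*\x_i$ terms and the invertibility remarks for $z\in\C\backslash\R$ are all handled correctly.
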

\begin{lemma}[Marchenko-Pastur law application]
    For the random matrix $\X_N$ as in Definition \ref{def:hlfhvywish} , 
    \begin{equation}  \left(\frac{1}{z - \frac{\X_N\X_N^*}{N}}\right)_{ii} \underset{D \rightarrow \infty}{\rightarrow} \frac{1}{z - \frac{z}{y}m_{1/y}\left(\frac{z}{y}\right)}. \end{equation}
    uniformly on $i$ in probability.
\end{lemma}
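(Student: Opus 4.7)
The plan is to express the diagonal entry via a Schur-type identity and recognize the limit through the Marchenko--Pastur law applied to the ``dual'' Sample Covariance matrix $\X_N^{(i)*}\X_N^{(i)}/N$. Concretely, I would first apply Lemma~\ref{lemma:diagonalsttransform} to the auxiliary matrix $\mathbf{Y}_N := \X_N^*/\sqrt{N}$ (of size $N\times P$), whose $\mathbf{Y}_N^* \mathbf{Y}_N$ equals $\BA_N$. Writing $\mathbf{r}_i$ for the $i$-th row of $\X_N$ and $\X_N^{(i)}$ for the matrix obtained from $\X_N$ by deleting the $i$-th row, the Schur identity reads
\begin{equation*}
\bigl(\G_{\BA_N}(z)\bigr)_{ii} = \frac{1}{z - \frac{1}{N}\mathbf{r}_i\, z\bigl(zI_N - \X_N^{(i)*}\X_N^{(i)}/N\bigr)^{-1} \mathbf{r}_i^{\,*}}\,,
\end{equation*}
and the crucial feature is that $\mathbf{r}_i$ is independent of $\X_N^{(i)}$.

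Next, I would show that the quadratic form in the denominator concentrates around the scaled trace. Denoting $M := z(zI_N - \X_N^{(i)*}\X_N^{(i)}/N)^{-1}$, the aim is $\tfrac{1}{N}\mathbf{r}_i M \mathbf{r}_i^* \overset{\Prob}{\to} \tfrac{1}{N}\Tr M$, and then to compute the limit of $\tfrac{1}{N}\Tr M$ via Marchenko--Pastur. Splitting the quadratic form into off-diagonal and diagonal pieces: the off-diagonal piece has mean zero and, by the Ward identity applied to $(zI - \X_N^{(i)*}\X_N^{(i)}/N)^{-1}$, conditional variance of order $|z|^2/(N|\Im z|^2)$, hence vanishes by Chebyshev; the diagonal piece $\tfrac{1}{N}\sum_j M_{jj}|x_{ij}|^2$ converges to $\sigma_N^2 \cdot \tfrac{1}{N}\Tr M$ by a weighted weak law of large numbers that uses only $\Exp|x|^2<\infty$ and the uniform bound $|M_{jj}| \le |z|/|\Im z|$, proved by an internal ``cut at level $K$'' argument despite the absence of a fourth moment. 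The Marchenko--Pastur theorem (Proposition~\ref{prop:mplaw}) applied to the $(P{-}1)\times N$ matrix $\X_N^{(i)}$, combined with the elementary spectral duality
\begin{equation*}
\tfrac{1}{N}\Tr\bigl(zI_N - \X_N^{(i)*}\X_N^{(i)}/N\bigr)^{-1} = \tfrac{P-1}{N}\cdot\tfrac{1}{P-1}\Tr\bigl(zI - \X_N^{(i)}\X_N^{(i)*}/N\bigr)^{-1} + \tfrac{N-P+1}{Nz}
\end{equation*}
accounting for the $N-(P-1)$ zero eigenvalues, gives the almost-sure limit $y\,m_y(z) + (1-y)/z$; substituting in the MP quadratic equation~\eqref{eqn:sttransformquadr} at parameter $(1/y, z/y)$ identifies this with $(1/y)m_{1/y}(z/y)$. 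Combining, the Schur denominator tends in probability to $z - (z/y)m_{1/y}(z/y)$, whose imaginary part is nonzero, and inverting produces the stated limit.

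Uniformity in $i$ in the sense of the paper's Definition of uniform convergence in probability is automatic from exchangeability of the rows of $\X_N$: each $(\G_{\BA_N}(z))_{ii}$ has the same distribution, so $\max_i \Prob(|(\G_{\BA_N})_{ii}-\text{limit}|>\epsilon) = \Prob(|(\G_{\BA_N})_{11}-\text{limit}|>\epsilon) \to 0$. The hard part will be the weighted WLLN for the diagonal sum: since $|x_{ij}|^2$ has infinite variance for $\alpha<4$, a direct Chebyshev bound is hopeless, and one must truncate internally at a level $K$ with $K\to\infty$ but $K=o(\sqrt{N})$, handling the bounded part by Chebyshev and the tail part by $\Exp|x|^2\mathbf{1}_{|x|^2>K}\to 0$. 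This is the only place where the half-heavy tail regime $2<\alpha<4$ requires care beyond the light-tailed proof.
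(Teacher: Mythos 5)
Your proposal is correct. Note that the paper states this lemma in the appendix without any proof at all, so there is nothing to compare against; your argument fills that gap, and it does so using precisely the tools the paper has already set up: the Schur-type identity of Lemma~\ref{lemma:diagonalsttransform} applied to $\X_N^*/\sqrt{N}$, the Ward identity (Lemma~\ref{lemma:wardidentity}) for the off-diagonal part of the quadratic form, the spectral duality between $\X_N^{(i)*}\X_N^{(i)}/N$ and $\X_N^{(i)}\X_N^{(i)*}/N$ together with Proposition~\ref{prop:mplaw}, and exchangeability of the rows for uniformity in $i$. Your identification $\frac{z}{y}m_{1/y}(z/y) = zym_y(z)+1-y$ is the right one (the paper's own remark following the lemma writes $(1-y)-zym_y(z)$, which is a sign typo; the quadratic equation~\eqref{eqn:sttransformquadr} gives $1/m_y(z)=z-(1-y)-zym_y(z)$, consistent with your computation). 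Your flagging of the weighted WLLN for $\frac{1}{N}\sum_j M_{jj}|x_{ij}|^2$ as the only place where $\alpha<4$ matters is accurate, and the internal truncation at level $K\to\infty$, $K=o(N)$, handles it using only $\Exp|x|^2<\infty$; the final inversion step is licensed by the uniform lower bound $|z-\tfrac{1}{N}\mathbf{r}_iM\mathbf{r}_i^*|\geq|\Im z|$, which you correctly invoke via the sign of the imaginary part.
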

\begin{remark}
Notice, that equation \eqref{eqn:sttransformquadr} leads to 
$$ \frac{1}{z - \frac{z}{y}m_{1/y}\left(\frac{z}{y}\right)}
=\frac{1}{z-\left((1-y)-zym_y(z)\right)}=m_y(z).$$
Thus, 
\begin{equation}  \left(\frac{1}{z - \frac{\X_N\X_N^*}{N}}\right)_{ii} \underset{D \rightarrow \infty}{\rightarrow} m_y(z). 
\end{equation}
\end{remark}

\begin{lemma}
\label{lemma:imsgn}
For $\mathbf{A}=\mathbf{X}\mathbf{X}^*,$ where $\mathbf{X}$ is any matrix with at least $1$ non-zero element
\begin{enumerate}
    \item $\sgn \Im\left( z \Tr\G_\mathbf{A}(z) \right)= -\sgn \Im z.$
    \item $ \sgn \Im\left( z \left(\G_\mathbf{A}(z)\right)_{ii} \right)= -\sgn \Im z $
\end{enumerate}
\begin{proof}
    The first part can be seen from the positivity of eigenvalues of $\mathbf{A}.$ The second part follows from  the eigenvalue positivity of the Sample Covariance matrix and Lemma \ref{lemma:diagonalsttransform}.
\end{proof}
\end{lemma}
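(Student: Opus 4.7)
Both statements will be derived from the spectral decomposition of the Hermitian positive-semi-definite matrix $\mathbf{A} = \X\X^*$. Writing $\mathbf{A} = \sum_k \lambda_k \mathbf{u}_k \mathbf{u}_k^*$ with $\lambda_k \geq 0$ and an orthonormal basis $\{\mathbf{u}_k\}_{k=1}^{P}$, the hypothesis that $\X$ has at least one non-zero entry ensures $\Tr\mathbf{A} = \sum_{i,j}|x_{ij}|^2 > 0$, so at least one $\lambda_k$ is strictly positive. The single algebraic identity that drives both parts is
\[
\frac{z}{z-\lambda_k} \;=\; 1 + \frac{\lambda_k}{z-\lambda_k},
\]
which separates a purely real constant from the piece carrying the non-trivial imaginary information.

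For Part 1, the plan is to expand
\[
z\,\Tr\G_\mathbf{A}(z) \;=\; \sum_{k=1}^P \frac{z}{z-\lambda_k} \;=\; P + \sum_{k=1}^P \frac{\lambda_k}{z-\lambda_k},
\]
discard the real constant $P$ when taking imaginary parts, and obtain
\[
\Im\bigl(z\,\Tr\G_\mathbf{A}(z)\bigr) \;=\; -\,\Im z\,\sum_{k=1}^P \frac{\lambda_k}{|z-\lambda_k|^2}.
\]
Since at least one $\lambda_k$ is strictly positive, the sum is strictly positive and the stated equality of signs follows immediately.

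For Part 2, I would apply the same manipulation entry-wise. With $u_{k,i}:=(\mathbf{u}_k)_i$, the spectral formula gives $(\G_\mathbf{A}(z))_{ii}=\sum_k|u_{k,i}|^2/(z-\lambda_k)$, and unitarity of the eigenbasis supplies the crucial normalization $\sum_k|u_{k,i}|^2 = 1$. Multiplying by $z$ and separating as above yields
\[
z(\G_\mathbf{A}(z))_{ii} \;=\; 1 + \sum_{k=1}^P \frac{\lambda_k |u_{k,i}|^2}{z-\lambda_k},
\qquad
\Im\bigl(z(\G_\mathbf{A}(z))_{ii}\bigr) \;=\; -\,\Im z\,\sum_{k=1}^P \frac{\lambda_k |u_{k,i}|^2}{|z-\lambda_k|^2}.
\]
An equivalent route, matching the hint in the author's one-line proof, is to apply Lemma~\ref{lemma:diagonalsttransform} with $\X^*$ in place of $\X$: this represents $(\G_\mathbf{A}(z))_{ii}$ as $1/(z-\mathbf{y}_i^*\,\tfrac{z}{z-\mathbf{B}_i}\,\mathbf{y}_i)$ for a positive semi-definite matrix $\mathbf{B}_i$, so that the quadratic form inside inherits the sign computation of Part 1 and the outer Möbius step is then a short rewrite. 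The only genuine subtlety — and the one point that should be flagged in the write-up rather than being a real difficulty — is that the coefficient $\sum_k \lambda_k|u_{k,i}|^2$ equals $\mathbf{A}_{ii}$, the squared norm of the $i$-th row of $\X$; it is strictly positive exactly when that row is non-zero. Under the blanket hypothesis ``$\X$ has some non-zero entry'' the equality in Part 2 must therefore be read as ``$-\sgn\Im z$ or $0$,'' with $0$ occurring only in the degenerate case of an all-zero row. In every application in the body of the paper the matrix $\hat\X_N$ has i.i.d.\ non-degenerate entries, so almost surely every row is non-zero and the strict equality is what is used downstream.
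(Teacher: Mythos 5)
Your proof is correct and, for Part 1, is exactly the paper's argument (positivity of the eigenvalues of $\mathbf{A}$, with at least one strictly positive eigenvalue since $\Tr\mathbf{A}>0$) written out in full. For Part 2 you expand the diagonal resolvent entry spectrally instead of invoking Lemma~\ref{lemma:diagonalsttransform} as the paper's one-line proof does, but the two routes are interchangeable --- you sketch the paper's route yourself --- and both reduce to the same positivity statement; your version has the small advantage of identifying the relevant coefficient as $\sum_k\lambda_k|u_{k,i}|^2=\mathbf{A}_{ii}$, the squared norm of the $i$-th row of $\X$. Your caveat is also well taken: under the stated hypothesis (only ``some entry of $\X$ is non-zero'') Part 2 can fail for an index $i$ whose row of $\X$ vanishes, since then $z\left(\G_{\mathbf{A}}(z)\right)_{ii}=1$ is real; the correct reading is $\sgn\Im z\cdot\Im\left(z\left(\G_{\mathbf{A}}(z)\right)_{ii}\right)\leq 0$, with equality only in that degenerate case, and this non-strict form is all that is actually used downstream (e.g.\ to ensure $\Im\lambda\leq 0$ in Part~\ref{trunc:charfunc} of Lemma~\ref{lem:truncbound} and to bound $\left|r^{(i)}_{N,k}\right|\leq 1$), so nothing in the paper is affected.
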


\begin{lemma}
For any Hermitian matrix $\mathbf{A}$ holds
$$ 
\left|\G_\mathbf{A}(z)_{ii}\right|\leq\frac{1}{\left|\Im z\right|}.
$$
\label{lemma:diagest}
\end{lemma}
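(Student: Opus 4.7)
The plan is to use the spectral decomposition of $\mathbf{A}$, which exists because $\mathbf{A}$ is Hermitian. Write $\mathbf{A} = U\Lambda U^*$ where $U$ is unitary and $\Lambda = \operatorname{diag}(\lambda_1,\ldots,\lambda_P)$ with $\lambda_k \in \R$. Then the resolvent is $\G_{\mathbf{A}}(z) = U(z\Id_P - \Lambda)^{-1}U^*$, and the $ii$-entry expands as
\begin{equation*}
\G_{\mathbf{A}}(z)_{ii} = \sum_{k=1}^{P} \frac{|U_{ik}|^{2}}{z - \lambda_k}.
\end{equation*}

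From here the bound is immediate by the triangle inequality: since each $\lambda_k$ is real, $|z - \lambda_k|^2 = (\Re z - \lambda_k)^2 + (\Im z)^2 \geq (\Im z)^2$, so $|z-\lambda_k|^{-1} \leq |\Im z|^{-1}$. Combined with the fact that $U$ is unitary, which gives $\sum_{k=1}^P |U_{ik}|^2 = (UU^*)_{ii} = 1$, this yields
\begin{equation*}
\left|\G_{\mathbf{A}}(z)_{ii}\right| \leq \sum_{k=1}^{P} \frac{|U_{ik}|^{2}}{|z-\lambda_k|} \leq \frac{1}{|\Im z|}\sum_{k=1}^{P}|U_{ik}|^{2} = \frac{1}{|\Im z|}.
\end{equation*}

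There is no real obstacle here; the proof is just spectral decomposition plus a one-line estimate. The only thing to take care of is ensuring the spectral decomposition is invoked correctly (guaranteed because $\mathbf{A}$ is Hermitian), so that eigenvalues are real and eigenvectors form an orthonormal basis, which is exactly what makes both $|z-\lambda_k| \geq |\Im z|$ and $\sum_k |U_{ik}|^2 = 1$ valid.
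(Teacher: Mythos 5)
Your proof is correct. The paper states this lemma without proof (it is a standard resolvent bound), and your spectral-decomposition argument --- writing $\G_{\mathbf{A}}(z)_{ii}=\sum_k |U_{ik}|^2/(z-\lambda_k)$, using $|z-\lambda_k|\geq|\Im z|$ for real eigenvalues and $\sum_k|U_{ik}|^2=1$ by unitarity --- is exactly the standard justification; equivalently one could note $|\G_{\mathbf{A}}(z)_{ii}|\leq\|\G_{\mathbf{A}}(z)\|\leq 1/|\Im z|$, but your version is the same estimate made explicit.
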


\begin{lemma}[Lemma 8.3 in \cite{erdos_dynamical_nodate}, Ward identity]
\label{lemma:wardidentity}
For any Hermitian matrix $\mathbf{A}$ of the size $N\times N$ holds
$$ 
\sum_{j=1}^{N}\left|\G_\mathbf{A}(z)_{ij}\right|^2=-\frac{1}{\Im z}\Im \left(\G_\mathbf{A}(z)_{ii}\right)
$$
\end{lemma}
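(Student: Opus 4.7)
The plan is to derive the Ward identity as a one-line algebraic consequence of the first resolvent identity, applied to the pair $\G_{\mathbf{A}}(z)$ and $\G_{\mathbf{A}}(z)^*$. Hermiticity of $\mathbf{A}$ gives $\G_{\mathbf{A}}(z)^* = ((z\Id-\mathbf{A})^{-1})^* = (\bar z\Id - \mathbf{A})^{-1} = \G_{\mathbf{A}}(\bar z)$, so the two resolvents have inverses that differ by a scalar:
\[
\G_{\mathbf{A}}(z)^{-1} - (\G_{\mathbf{A}}(z)^*)^{-1} = (z\Id-\mathbf{A})-(\bar z\Id-\mathbf{A}) = 2\iunit\,\Im z\cdot \Id .
\]
Sandwiching this identity between $\G_{\mathbf{A}}(z)$ on the left and $\G_{\mathbf{A}}(z)^*$ on the right gives
\[
\G_{\mathbf{A}}(z)^* - \G_{\mathbf{A}}(z) = 2\iunit\,\Im z\cdot \G_{\mathbf{A}}(z)\G_{\mathbf{A}}(z)^* ,
\]
which rearranges, with $\Im M := (M-M^*)/(2\iunit)$, to the operator identity $\G_{\mathbf{A}}(z)\G_{\mathbf{A}}(z)^* = -\,\Im\G_{\mathbf{A}}(z)/\Im z$.

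To finish I simply read off the $(i,i)$ entry on both sides. On the left, $(\G\G^*)_{ii} = \sum_{j=1}^N \G_{ij}(z)\,\overline{\G_{ij}(z)} = \sum_{j=1}^N |\G_{\mathbf{A}}(z)_{ij}|^2$, while on the right one gets $-\,\Im(\G_{\mathbf{A}}(z)_{ii})/\Im z$, and these agree with the statement of the lemma.

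There is essentially no obstacle: the identity is purely algebraic and uses only Hermiticity of $\mathbf{A}$ and the definition of the resolvent. As a cross-check one may instead use the spectral decomposition $\mathbf{A} = \sum_k \lambda_k v_k v_k^*$, which gives $\G_{\mathbf{A}}(z)_{ij} = \sum_k \overline{v_k(j)}\,v_k(i)/(z-\lambda_k)$; the orthonormality relation $\sum_j \overline{v_k(j)}\,v_l(j) = \delta_{kl}$ then collapses the double sum to $\sum_j |\G_{\mathbf{A}}(z)_{ij}|^2 = \sum_k |v_k(i)|^2/|z-\lambda_k|^2$, while $\Im\G_{\mathbf{A}}(z)_{ii} = -\Im z \cdot \sum_k |v_k(i)|^2/|z-\lambda_k|^2$, and the ratio of these two expressions is exactly $-1/\Im z$, recovering the identity.
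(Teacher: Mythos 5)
Your proof is correct. The paper does not prove this lemma at all — it simply cites it as Lemma 8.3 of the Erd\H{o}s lecture notes — and your resolvent-identity argument (equivalently, the spectral-decomposition cross-check) is exactly the standard derivation behind that reference, so there is nothing to add.
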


\begin{lemma} 
\label{lemma:derivative_swap}
Suppose that $X(z)$ is a real continuously differentiable random process for $z\in \Omega,$ where $\Omega$ is some open domain, and there exist $C$ such that with probability $1$ for all $a, b \in \R$ such that $a+\iunit b = z \in \Omega$ 
\begin{equation}
\begin{cases}
    \left|\nabla_{a, b} X\left(a+\iunit b\right)\right| \leq C, \\
    X(z) \leq C
    \end{cases}
\end{equation}
Then for any $\sigma$-algebra $\mathcal{F}$
\begin{equation}
    \frac{\partial }{\partial z }\Exp\left(X(z)\mid\mathcal{F}\right) = \Exp\left(\frac{\partial}{\partial z}X(z)\mid \mathcal{F}\right).
    \end{equation}
\end{lemma}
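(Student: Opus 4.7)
The plan is a textbook difference-quotient argument combined with the conditional dominated convergence theorem. Writing $z = a + \iunit b$ and interpreting $\partial/\partial z = \tfrac{1}{2}(\partial_a - \iunit\,\partial_b)$ as the Wirtinger derivative of the real function $X$, it suffices to establish the swap separately for $\partial/\partial a$ and $\partial/\partial b$ and then take the corresponding complex linear combination; by symmetry I describe only the case $\partial/\partial a$.

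Fix $z_0 \in \Omega$ and choose a real increment $h$ small enough that the horizontal segment $\{z_0 + t : 0 \leq t \leq h\}$ lies entirely in $\Omega$. Define the random difference quotient
\begin{equation*}
Q_h(\omega) := \frac{X(z_0+h)(\omega) - X(z_0)(\omega)}{h}.
\end{equation*}
Applied pathwise to $t \mapsto X(z_0 + t)(\omega)$, the one-dimensional mean value theorem produces some $\tau = \tau(h,\omega) \in (0,|h|)$ with $Q_h = (\partial_a X)(z_0 + \tau)$; the hypothesis $|\nabla_{a,b}X| \leq C$ then gives the uniform bound $|Q_h| \leq C$ almost surely, and the assumed continuous differentiability of $X$ yields $Q_h \to (\partial_a X)(z_0)$ almost surely as $h \to 0$. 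Linearity of conditional expectation rewrites
\begin{equation*}
\Exp[Q_h \mid \mathcal{F}] = \frac{\Exp[X(z_0+h)\mid\mathcal{F}] - \Exp[X(z_0)\mid\mathcal{F}]}{h},
\end{equation*}
and the conditional dominated convergence theorem, applied with the constant integrable majorant $C$, lets me pass to the limit:
\begin{equation*}
\lim_{h\to 0}\Exp[Q_h\mid\mathcal{F}] = \Exp\bigl[(\partial_a X)(z_0)\bigm|\mathcal{F}\bigr] \quad\text{almost surely.}
\end{equation*}
Reading this back through the previous display identifies the limit with $\partial_a \Exp[X(z_0)\mid\mathcal{F}]$, proving the swap for $\partial/\partial a$. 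Repeating the argument with $h$ purely imaginary yields the analogous identity for $\partial/\partial b$, and assembling the Wirtinger combination proves the claim.

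The main obstacle is ensuring the conditional expectations are genuinely well-defined: the stated hypothesis only supplies the one-sided pointwise bound $X(z) \leq C$, not $|X(z)| \leq C$. To fill this gap I would integrate the gradient bound along a path from any fixed reference point $z_* \in \Omega$ at which $\Exp|X(z_*)|<\infty$, obtaining $|X(z) - X(z_*)| \leq C\cdot|z-z_*|$ almost surely and hence a two-sided bound on every compact subset of $\Omega$; this makes $X(z)$ integrable on any such compact piece and legitimises the manipulations above. In the applications of the lemma in the paper, $X = \log|g_{N,k}(z)|^2$ is directly sandwiched between two-sided bounds by the estimate~\eqref{eqn:gest} together with a Poisson-kernel argument (Lemma~\ref{lemma:poissonkernel}), so integrability is immediate and the reduction above is automatic.
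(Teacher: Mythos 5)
The paper states Lemma~\ref{lemma:derivative_swap} in the appendix without any proof, so there is no argument of the paper's to compare yours against; your proposal supplies exactly the standard argument one would expect. The core of it is correct: splitting $\partial/\partial z$ into $\partial_a$ and $\partial_b$, bounding the difference quotient $Q_h$ by $C$ via the mean value theorem and the gradient hypothesis, using continuous differentiability for the almost sure limit, and invoking conditional dominated convergence with the constant majorant gives the swap in each real direction and hence for the Wirtinger combination. Your observation about the one-sided hypothesis $X(z)\leq C$ is a genuine defect of the lemma as stated --- without integrability of $X(z)$ the conditional expectations need not be finite --- and your repair (a reference point $z_*$ with $\Exp|X(z_*)|<\infty$ together with the Lipschitz bound coming from $|\nabla_{a,b}X|\leq C$) is the natural one; note, though, that in the paper's application the sandwich \eqref{eqn:gest} gives an integrable but \emph{random} envelope (it involves $\frac{1}{N}\|\mathbf{x}_k\|_2^2$), so strictly speaking the lemma is invoked with an integrable dominating random variable rather than a constant $C$ --- a flaw in the paper's formulation rather than in your proof, and your argument goes through verbatim with such a majorant. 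One minor point worth making explicit if you write this up: conditional expectations are only defined up to null sets depending on $z$ and on the sequence $h_n\to 0$, so to conclude genuine differentiability of $z\mapsto\Exp[X(z)\mid\mathcal F](\omega)$ for almost every $\omega$ you should fix versions, e.g.\ by noting that $\bigl|\Exp[X(z)\mid\mathcal F]-\Exp[X(z')\mid\mathcal F]\bigr|\leq C|z-z'|$ almost surely on a countable dense set of points, extending by continuity, and then running your difference-quotient limit along a countable set of increments.
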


\begin{lemma}
For each harmonic function $u$ in $\mathcal{B}(0, 1)$ holds
$$|\nabla u(0, 0)|\leq 2 \max_{\theta} |u(cos\theta, sin\theta)|  $$
\label{lemma:poissonkernel}
\end{lemma}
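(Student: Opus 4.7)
The plan is to invoke the Poisson integral representation on the unit disk and differentiate under the integral at the origin. For any $r < 1$, the function $v(x, y) := u(rx, ry)$ is harmonic on a neighborhood of $\overline{\mathcal{B}(0, 1)}$, so Poisson's formula gives
\[
v(x, y) = \frac{1}{2\pi} \int_0^{2\pi} P(x, y, \theta)\, v(\cos\theta, \sin\theta)\, d\theta,
\]
where $P(x, y, \theta) = \frac{1 - x^2 - y^2}{(x - \cos\theta)^2 + (y - \sin\theta)^2}$ is the standard Poisson kernel. Passing to the limit $r \uparrow 1$ at the end then yields the claim for $u$ itself; from here I simply work with $u$ as if it were continuous up to the boundary circle.

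Next, I would compute $\nabla P(0, 0, \theta)$ directly. At the origin both the numerator $1 - x^2 - y^2$ and the denominator $(x - \cos\theta)^2 + (y - \sin\theta)^2$ equal $1$, and the quotient rule gives
\[
\partial_x P(0, 0, \theta) = 2\cos\theta, \qquad \partial_y P(0, 0, \theta) = 2\sin\theta.
\]
Differentiating the Poisson representation under the integral sign, which is justified because the integrand is smooth in $(x, y)$ on a neighborhood of the origin uniformly in $\theta$, gives
\[
\nabla u(0, 0) = \frac{1}{\pi} \int_0^{2\pi} (\cos\theta, \sin\theta)\, u(\cos\theta, \sin\theta)\, d\theta.
\]

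Finally, choosing the unit vector $\vec{v} = (\cos\phi, \sin\phi)$ to point in the direction of $\nabla u(0, 0)$, I get
\[
|\nabla u(0, 0)| = \vec{v} \cdot \nabla u(0, 0) = \frac{1}{\pi} \int_0^{2\pi} \cos(\theta - \phi)\, u(\cos\theta, \sin\theta)\, d\theta,
\]
and then I bound the right-hand side in absolute value by $\frac{1}{\pi} \max_\theta |u(\cos\theta, \sin\theta)| \cdot \int_0^{2\pi} |\cos(\theta - \phi)|\, d\theta = \frac{4}{\pi} \max_\theta |u(\cos\theta, \sin\theta)|$. Since $\frac{4}{\pi} < 2$, this is in fact strictly stronger than the claimed inequality. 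There is no real obstacle in the argument; the only mildly delicate points are the differentiation under the integral and the $r \uparrow 1$ limit, both of which follow from the smoothness of $P$ away from the boundary circle and dominated convergence on a fixed smaller disk.
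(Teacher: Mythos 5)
Your proof is correct and follows essentially the same route as the paper's: both apply the Poisson integral representation and differentiate the kernel at the origin, where its gradient is $(2\cos\theta, 2\sin\theta)$ (the paper writes the kernel as the Fourier series $\sum_n r^{|n|}e^{in\theta}$ and bounds $|\partial_r P_r|_{r=0}|\leq 2$, while you keep the closed Cartesian form). Your version even yields the slightly sharper constant $4/\pi$ by integrating $|\cos(\theta-\phi)|$ exactly instead of bounding it by $1$, which of course implies the stated bound with constant $2$.
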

\begin{proof}
By Poisson integral formula 
$$u(re^{i\theta})=\frac{1}{2\pi}\int_{-\pi}^{\pi} P_r(t-\theta)u(e^{it})d t \text{ for } 0\leq r <1,$$
where $P_r(\theta)=\sum_{n=-\infty}^{\infty} r^{|n|} e^{in\theta}$ is a Poisson kernel. It is easy to see, that $\left|\frac{d}{d r}P_r(\theta)\arrowvert_{r=0}\right|\leq 2.$ Thus, 
$$ \left|\frac{d}{d r}\frac{1}{2\pi}\int_{-\pi}^{\pi} P_r(t-\theta)u(e^{it})d t \mid_{r=0} \right|\leq \max_t |u(e^{\iunit t})|$$

%This sum and its derivatives are absolutely converging. Let's choose and fix $\theta$ that corresponds to the gradient angle in $0$. Than 
$$|\nabla u(0, 0)|=\max_{\theta}\frac{d}{dr} u(re^{i\theta})\mid_{r=0}.$$
$|\frac{d}{d r}P_r(\theta)|\leq 2\text { when } r=0.$ 
\end{proof}

\begin{corollary}
Suppose, that holomorphic function $g(z)$ is defined on $\C\backslash\R$ and satisfies
$$\left|\Re g(z)\right| \leq C \frac{|z|}{|\Im z|}$$
for all $z\in \C\backslash\R,$
where $C$ is any constant not depending on $z.$Then, $$
\left|\frac{\partial}{\partial z}\Re g(z)\right|\leq 8 C \frac{|z|}{|\Im z|^2}
$$
for all $z\in \C\backslash\R.$
\label{col:derest}
\end{corollary}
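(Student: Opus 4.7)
The claim is a standard Cauchy/Poisson-type derivative bound for a harmonic function whose $L^\infty$ norm on a disk is controlled. The plan is to reduce to the unit-disk statement already available as Lemma~\ref{lemma:poissonkernel} by choosing a disk about a generic point $z_0 \in \C\backslash\R$ that is well inside the domain of $g$, rescaling it to $\mathcal{B}(0,1)$, transferring the hypothesis $|\Re g| \leq C|z|/|\Im z|$ into an $L^\infty$ bound on the unit disk, applying the Poisson-kernel gradient estimate, and then undoing the rescaling (which is where the second power of $|\Im z_0|$ appears).

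\textbf{Key steps in order.} Fix $z_0 \in \C\backslash\R$ and set the scale $r := |\Im z_0|/2$, so that the closed disk $\overline{B(z_0,r)}$ still lies in $\C\backslash\R$; explicitly, for any $z \in B(z_0,r)$ one has $|\Im z| \geq |\Im z_0|-r = |\Im z_0|/2$ and $|z| \leq |z_0|+r \leq \tfrac{3}{2}|z_0|$ (using $|\Im z_0|\leq |z_0|$). Combined with the hypothesis this yields
\begin{equation*}
\sup_{z\in B(z_0,r)}|\Re g(z)| \leq C\cdot\frac{3|z_0|/2}{|\Im z_0|/2} = 3C\frac{|z_0|}{|\Im z_0|}.
\end{equation*}
Next define $u(\zeta) := \Re g(z_0 + r\zeta)$ for $\zeta \in \mathcal{B}(0,1)$; since $g$ is holomorphic, $\Re g$ is harmonic on $\C\backslash\R$, and the composition with an affine map keeps $u$ harmonic on the unit disk and bounded by $3C|z_0|/|\Im z_0|$ there. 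Lemma~\ref{lemma:poissonkernel} then gives
\begin{equation*}
|\nabla u(0,0)| \leq 6C\frac{|z_0|}{|\Im z_0|}.
\end{equation*}
The chain rule for $u(\zeta)=\Re g(z_0 + r\zeta)$ gives $\nabla u(0,0) = r \nabla(\Re g)(z_0)$, so dividing by $r = |\Im z_0|/2$ transfers this into $|\nabla(\Re g)(z_0)| \leq 12C|z_0|/|\Im z_0|^2$. Finally, interpreting $\partial/\partial z$ as the Wirtinger derivative $\tfrac{1}{2}(\partial_x - \iunit\partial_y)$ on the real-valued function $\Re g$ gives $|\partial_z \Re g| = \tfrac{1}{2}|\nabla \Re g|$, producing the bound $6C|z_0|/|\Im z_0|^2$, which is within the stated constant $8C$.

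\textbf{Main subtlety.} There is no deep obstacle — the argument is essentially rescaling plus the already-proven unit-disk Poisson estimate. The only things to keep track of are (i) choosing $r$ large enough to capture a definite fraction of $|\Im z_0|$ while staying in $\C\backslash\R$, which fixes the $|\Im z_0|^2$ in the denominator via the chain rule factor $1/r$; (ii) bounding $|z|/|\Im z|$ on the disk cleanly in terms of $|z_0|/|\Im z_0|$, for which the observation $|\Im z_0|\leq |z_0|$ is enough; and (iii) being consistent about whether the derivative is the real gradient or the Wirtinger $\partial_z$, where a harmless factor of $\tfrac{1}{2}$ appears. All numerical constants end up absorbed into the $8$ claimed in the statement.
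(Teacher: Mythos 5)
Your proposal is correct and follows essentially the same route as the paper: apply the unit-disk Poisson gradient estimate (Lemma~\ref{lemma:poissonkernel}) after rescaling to the ball of radius $|\Im z|/2$ about $z$, bound $|\Re g|$ there by $3C|z|/|\Im z|$ using $|\Im z|\leq|z|$, and undo the scaling. Your bookkeeping of the scaling factor and the Wirtinger factor $\tfrac12$ is in fact slightly more careful than the paper's write-up, and it lands at $6C\leq 8C$ as claimed.
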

\begin{proof}
Notice that $\Re g(z)$ is a harmonic function. 
Applying Lemma \ref{lemma:poissonkernel} to the ball with the centre in $z$ and of the radius $|\Im z|/2$ we can check, that
$$\left|\frac{\partial}{\partial z}\Re g(z)\right|\leq 
\left|\nabla_{x, y} \Re g(x+\iunit y)\right|\leq \frac{1}{|\Im z|/2}\max_{w\in \mathcal{B}\left(z, \frac{|\Im z|}{2} \right)}|\Re g(w)|\leq 8C\frac{|z|}{|\Im z|^2}
$$
\end{proof}

% \begin{lemma}
% $E_k(z)$ converges to $0$ in probability uniformly on $k$. 
% \label{lemma:Ekto0}
% \end{lemma}

% Consider real and imaginary parts of $E_k$ separately. For $1\leq j \leq N$ random variables  $(|x_{j, k}|^2-\frac{\sigma_N^2}{N})$ are i.i.d, real, with $0$ mean. Real and imaginary parts of $(G_{N, k})_{jj}$ are bounded with $\frac{1}{|\operatorname{Im}z|}$ (Lemma 
% \ref{lemma:diagest}). 
% Using part 4 of Lemma \ref{lem:trunc} characteristic function the $\Re E_k$  and $\Im E_k$ can be estimated through $\left(1+o\left(t\frac{\frac{1}{|\Im z|}}{N}\right)\right)^P,$ which converges to $1$ for any fixed $z.$
% Thus, characteristic functions of the $\Re E_k$ and $\Im E_k$ converge to the characteristic function of $0.$ This means that $E_k\to 0$ in probability. 

\begin{lemma}[CLT for martingales,Th. 35.12 in  Billingsley (1995)] Suppose for each $n$  $Y_{n 1}, Y_{n 2}, \ldots Y_{n r_n}$ is a real martingale difference sequence with respect to the increasing $\sigma$-field $\left\{\mathcal{F}_{n j}\right\}$ having second moments. If for each $\varepsilon>0$,
$$
\sum_{j=1}^{r_n} \mathbb{E}\left(Y_{n j}^2 I_{\left(\left|Y_{n j}\right| \geq \varepsilon\right)}\right) \rightarrow 0 \quad $$

$$\sum_{j=1}^{r_n} \mathbb{E}\left(Y_{n j}^2 \mid \mathcal{F}_{n, j-1}\right) \stackrel{i . p .}{\longrightarrow} \sigma^2,
$$
as $n \rightarrow \infty$, where $\sigma^2$ is a positive constant, then
$$
\sum_{j=1}^{r_n} Y_{n j} \stackrel{D}{\rightarrow} N\left(0, \sigma^2\right)
$$
\label{lemma:martingale_clt}
\end{lemma}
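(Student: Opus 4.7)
The plan is to recycle essentially every estimate from the proof of Theorem~\ref{thm:main}, while being careful about three things: which columns $\x_k$ contribute a non-zero martingale increment to each $\theta_N^{[i]}$, which rows of $\x_k$ actually appear in the relevant quadratic forms, and how the Marchenko--Pastur Stieltjes transform gets rescaled to the aspect ratio $p_i/q_i$ of the submatrix $\BA_N^{[i]}$. By the Cram\'er--Wold device, joint convergence reduces to proving that every fixed real linear combination $\sum_i (\alpha_i^{\Re}(\theta_N^{[i]}(z_i)+\theta_N^{[i]}(\bar z_i)) + \iunit \alpha_i^{\Im}(\theta_N^{[i]}(z_i)-\theta_N^{[i]}(\bar z_i)))$ converges in distribution to a centred Gaussian with the variance predicted by $\{C_{i,j}\}$. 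I would set up the martingale array with respect to the filtration $\cF_{N,k}$ generated by the first $k$ columns of the full $\X_N$, and define $Y_k^{[i]}(z):=N^{\alpha/4-1}(\Exp_k-\Exp_{k-1})\Tr\G_{\BA_N^{[i]}}(z)$. Since $\BA_N^{[i]}$ depends on $\x_k$ only when $k\in \mathcal{Q}_i$, we get $Y_k^{[i]}(z)=0$ for $k\notin \mathcal{Q}_i$; the Lindeberg-type condition of Lemma~\ref{lemma:martingale_clt} follows from exactly the same $\frac{\pi}{N^{1-\alpha/4}|\Im z|}$ bound used in Section~\ref{sec:mart_decomposition}, applied to the resolvent of $\BA_N^{[i]}$.

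The truncation (Subsection~\ref{sec:truncation}) and diagonalisation (Subsection~\ref{sec:diagonalisation}) steps transfer verbatim after substituting $\x_k$ by its restriction $\x_k|_{\mathcal{P}_i}$ to rows in $\mathcal{P}_i$ and $\G_{N,k}$ by $\G_{N,k}^{[i]}$, the resolvent of the $|\mathcal{P}_i|\times(|\mathcal{Q}_i|-1)$ matrix obtained by removing column~$k$. The bounds in Lemmas~\ref{lem:truncbound}, \ref{lemma:diagest}, and \ref{lemma:eta} are insensitive to replacing $P$ by $|\mathcal{P}_i|$ or $N$ by $|\mathcal{Q}_i|$ because each of these is comparable to $N$ under the assumption $p_i,q_i>0$; in particular the inequality \eqref{eqn:epsilontochoose} used to justify removing off-diagonal resolvent entries still holds.

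The genuinely new computation is the limiting cross-covariance $\sum_{k=1}^N \Exp_{k-1}[\tilde Y_k^{[i]}(z)\tilde Y_k^{[j]}(w)]$. Only $k\in \mathcal{Q}_i\cap \mathcal{Q}_j$ survives, giving a prefactor $|\mathcal{Q}_i\cap\mathcal{Q}_j|/N\to \gamma_{ij}/p_{i,j}$ (to be combined below). Applying Lemma~\ref{lemma:tildelemma} and expanding the covariance as in \eqref{eqn:expansion}, one obtains a product over the rows of $\x_k$ in which the characteristic function $\phi_N$ is evaluated at $tu_n^{[i]}+sv_n^{[j]}$ only for $n\in \mathcal{P}_i\cap\mathcal{P}_j$, and at $\phi_N(tu_n^{[i]})$ or $\phi_N(sv_n^{[j]})$ for $n$ in $\mathcal{P}_i\setminus\mathcal{P}_j$ or $\mathcal{P}_j\setminus\mathcal{P}_i$ respectively. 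Factors coming from $\mathcal{P}_i\triangle\mathcal{P}_j$ are independent inside $\Cov_{\x_k}$ and so contribute only through the product $r^{(m),[i,j]}_{N,k}$, while the difference-of-products structure of $\ell^{(m),[i,j]}_{N,k}$ selects exactly the indices $m\in \mathcal{P}_i\cap\mathcal{P}_j$. Thus the summation over $m$ and $k$ yields the combined prefactor $|\mathcal{P}_i\cap\mathcal{P}_j|\,|\mathcal{Q}_i\cap\mathcal{Q}_j|/N^2\to \gamma_{ij}$.

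The pointwise limits of $\ell^{(m),[i,j]}_{N,k}$ and $r^{(m),[i,j]}_{N,k}$ then follow exactly as in Lemma~\ref{lemma:convergence}, except that the diagonal entries $(\G_{N,k}^{[i]})_{mm}(z)$ converge to $s^{[i]}(z)$ and $\frac{1}{N}\Tr\G_{N,k}^{[i]}(z)$ converges to $p_i s^{[i]}(z)$. The scaling identity $\BA_N^{[i]}=\frac{|\mathcal{Q}_i|}{N}\cdot\frac{\X_N^{[i]}\X_N^{[i]*}}{|\mathcal{Q}_i|}$ combined with Proposition~\ref{prop:mplaw} applied to the aspect ratio $|\mathcal{P}_i|/|\mathcal{Q}_i|\to p_i/q_i$ gives $s^{[i]}(z)=\frac{1}{q_i}m_{p_i/q_i}(z/q_i)$, so $K^{[i]}(z,t)=\iunit t\sgn_{\Im z}zs^{[i]}(z)$ appears in both $\ell^{[i,j]}$ and $r^{[i,j]}$ with the correct exponent $p_i$ in place of $y$. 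The dominating function from Lemma~\ref{lemma:uniform_bounded} and the truncation of the $(t,s)$ domain used to justify Dominated Convergence transfer without change. The main obstacle here is purely bookkeeping: one must verify that the diagonalisation estimates produced by Lemma~\ref{lemma:T}, when restricted to $\mathcal{P}_i$-indexed sums, still gain a negative power of $N$ (which they do, since $|\mathcal{P}_i|\asymp N$), and that the Cauchy--Schwarz-type factorisation \eqref{eqn:smallestimate} used on the split product over $\mathcal{P}_i\cap\mathcal{P}_j$ remains valid. No new estimates beyond those in Section~\ref{sec:mainproof} are required.
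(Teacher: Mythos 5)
Your proposal does not address the statement you were asked to prove. The statement is Lemma~\ref{lemma:martingale_clt}, the abstract martingale central limit theorem (Billingsley, Theorem 35.12): given a martingale difference array $Y_{n1},\dots,Y_{nr_n}$ satisfying a Lindeberg-type condition and convergence in probability of the conditional variances $\sum_j \Exp(Y_{nj}^2\mid\cF_{n,j-1})$ to $\sigma^2$, conclude $\sum_j Y_{nj}\Rightarrow N(0,\sigma^2)$. A proof of this would be a purely probabilistic argument about a general triangular array — for instance the classical route via conditional characteristic functions: truncate so that the cumulative conditional variance is bounded, write $\Exp\bigl[\exp\bigl(\iunit t\sum_j Y_{nj}\bigr)\bigr]$ as a telescoping product of conditional factors, use $\Exp\bigl[\exp(\iunit t Y_{nj})\mid\cF_{n,j-1}\bigr]=1-\tfrac{t^2}{2}\Exp\bigl[Y_{nj}^2\mid\cF_{n,j-1}\bigr]+o(\cdot)$ with the Lindeberg condition controlling the error terms, and conclude convergence of the characteristic function to $\exp(-\sigma^2 t^2/2)$. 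The paper itself offers no proof of this lemma; it is quoted as an external result, so the relevant comparison is between your text and such a standard argument — and your text contains none of it.

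What you have written is instead a (reasonable-looking) sketch of Theorem~\ref{thm:theoremforoverlapping}, the CLT for overlapping sample covariance matrices, closely following the paper's Section~\ref{sec:overlapping}: Cram\'er--Wold reduction, the martingale decomposition over columns $k\in\mathcal{Q}_i$, reuse of the truncation and diagonalisation estimates, and the identification of the prefactor $\gamma_{ij}$ and of $K^{[i]}(z,t)$ via the rescaling $s^{[i]}(z)=\tfrac{1}{q_i}m_{p_i/q_i}(z/q_i)$. Moreover, your argument explicitly invokes Lemma~\ref{lemma:martingale_clt} as an input (``the Lindeberg-type condition of Lemma~\ref{lemma:martingale_clt} follows from the $\tfrac{\pi}{N^{1-\alpha/4}|\Im z|}$ bound''), so as a proof of that lemma it is circular: you assume the very statement to be established. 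The gap is therefore not a flawed step but a complete mismatch of target — no part of the proposal proves, or even engages with, the abstract martingale CLT for a general array $Y_{nj}$.
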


\begin{lemma}
\begin{enumerate}

\item Suppose that the array of random variables $X^{(1)}_N, X^{(2)}_N, \dots X^{(N)}_N$ is such that
\[
X^{(k)}_N\underset{N \rightarrow \infty}{\rightarrow} 0,
\]
uniformly on $k$ in probability, and there exists constant $C$ such that $|X^{(k)}_N|<C$ for all $k, N.$
Then 
\[
\frac{X^{(1)}_N+X^{(2)}_N+\dots X^{(N)}_N}{N}\overset{\Prob}{\rightarrow} 0.
\]
\item
Suppose that the array of random variables $\left(X^{(k, i)}_N\right)$ is such that
\[
X^{(k, i)}_N\underset{N \rightarrow \infty}{\rightarrow} 0,
\]
uniformly on $(k, i)$ in probability, and there exists constant $C$ such that $|X^{(k, i)}_N|<C$ for all $k, N.$
Then 
\[
Y_N^{(k)}=\frac{X^{(k, 1)}_N+X^{(k, 2)}_N+\dots X^{(k, N)}_N}{N}\underset{N \rightarrow \infty}{\rightarrow} 0.
\]
in probability uniformly on $k.$
\item Suppose that the array of random variables $X^{(1)}_N, X^{(2)}_N, \dots X^{(N)}_N$ is such that
\[
\frac{\sum_{k=1}^N{X^{(k)}_N}}{N}\underset{N \rightarrow \infty}{\rightarrow} 0,
\]
uniformly in probability, and there exists constant $C$ such that $|X^{(k)}_N|<C$ for all $k, N.$
\end{enumerate}
\label{lemma:convergenceofaverages}
\end{lemma}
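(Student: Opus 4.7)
The plan for all parts is to upgrade uniform convergence in probability, combined with the uniform almost-sure bound $|X^{(k)}_N|\leq C$, to uniform convergence in $L^1$; after that, averaging is handled by the triangle inequality and the final conversion back to convergence in probability is Markov. The uniform bound $C$ plays the role of the dominating function that makes this truncation work.

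For part (1), fix $\epsilon>0$. For every $k$ and $N$ I would split according to whether $|X^{(k)}_N|$ exceeds $\epsilon$ or not, using the bound $C$ on the bad event:
\begin{equation*}
\Exp\bigl|X^{(k)}_N\bigr| \;\leq\; \epsilon + C\cdot\Prob\bigl(|X^{(k)}_N|>\epsilon\bigr) \;\leq\; \epsilon + C\,\max_{1\leq j\leq N}\Prob\bigl(|X^{(j)}_N|>\epsilon\bigr).
\end{equation*}
By the hypothesis of uniform-in-$k$ convergence in probability, the last term tends to $0$ as $N\to\infty$, so $\limsup_N \max_k \Exp|X^{(k)}_N| \leq \epsilon$. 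Since $\epsilon$ is arbitrary, $\max_k \Exp|X^{(k)}_N|\to 0$. The triangle inequality then gives
\begin{equation*}
\Exp\Bigl|\tfrac{1}{N}\sum_{k=1}^N X^{(k)}_N\Bigr| \;\leq\; \max_k \Exp\bigl|X^{(k)}_N\bigr| \;\longrightarrow\; 0,
\end{equation*}
and Markov's inequality converts this $L^1$ convergence into convergence in probability, as required.

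Part (2) uses exactly the same computation, with the maximum now taken over the double index $(k,i)$. By the uniform-in-$(k,i)$ hypothesis,
\begin{equation*}
\sup_k \Exp\bigl|Y_N^{(k)}\bigr| \;\leq\; \max_{k,i}\Exp\bigl|X^{(k,i)}_N\bigr| \;\leq\; \epsilon + C\,\max_{k,i}\Prob\bigl(|X^{(k,i)}_N|>\epsilon\bigr) \;\longrightarrow\; \epsilon,
\end{equation*}
so $\sup_k \Exp|Y_N^{(k)}|\to 0$, which by Markov gives $\max_k \Prob(|Y_N^{(k)}|>\delta)\to 0$ for each $\delta>0$, i.e.\ the required uniform-in-$k$ convergence in probability. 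Part (3) is stated without an explicit conclusion in the excerpt; regardless of the intended statement, the same $L^1$/Markov template as above applies, since uniform boundedness combined with any form of uniform convergence in probability for the averages permits the same tail-truncation estimate.

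There is no genuine obstacle here: the only thing one has to check carefully is that the tail-truncation bound is honestly uniform in $k$ (and in $(k,i)$ for part (2)), which is guaranteed by replacing the pointwise probability by its supremum at the very first step. Once that is done, every subsequent inequality is routine.
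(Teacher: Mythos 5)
Your argument is correct and is essentially the same as the paper's: bound $\Exp|X^{(k)}_N|$ by $\epsilon + C\,\Prob(|X^{(k)}_N|>\epsilon)$ using the uniform-in-$k$ convergence in probability, average via the triangle inequality, and finish with Markov's inequality (the paper uses the threshold $\sqrt{\epsilon}$, you let the level be arbitrary, which is the same thing). Your observation that part (3) lacks a stated conclusion matches the paper, whose proof likewise only treats the first two parts.
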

\begin{proof}
For any $\epsilon>0$ there exist $N_0$ such that for all $N>N_0$ $$\max_{1\leq k \leq N}\Prob \left\{ |X^{(k)}_N|\geq \epsilon \right\} <\epsilon,$$
which leads to $\Exp\left|X_N^{(k)}\right|\leq C\epsilon +\epsilon,$ thus
\[
\Exp \left[\left|\frac{X^{(1)}_N+X^{(2)}_N+\dots+X^{(N)}_N}{N}\right|\right]\leq \epsilon C +\epsilon.
\]

and 

\[
\Prob \left[\left|\frac{X^{(1)}_N+X^{(2)}_N+\dots+X^{(N)}_N}{N}\right|>\sqrt{\epsilon}\right]\leq (C + 1) \sqrt{\epsilon},
\]
which proves the first part of the Lemma. Also, the second part of the Lemma can be derived from the computations above. 
\end{proof}

\begin{lemma}[Cauchy inequality]
\label{lemma:cauchy_inequality}
Suppose that a function $f(z, w)$ is analytic both in $z, w$ for $z, w \in \C \backslash \R$
Then 
\begin{equation}
    \left|\frac{\partial}{\partial z}\frac{\partial}{\partial w} f(z, w)\right| \leq 4 \frac{1}{|\Im z||\Im w|} \max_{\substack{|\tilde{z}-z|=0.5|\Im z|\\ |\tilde{w}-w|=0.5|\Im w|}} \left|f\left(\tilde{z}, \tilde{w}\right)\right|.
\end{equation}
\end{lemma}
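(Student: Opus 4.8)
The plan is to derive this two–variable estimate by applying the classical one–dimensional Cauchy bound for derivatives twice, once in each variable. The only geometric input needed is elementary: for any $z_0\in\C\backslash\R$ the closed disc $\{\zeta:|\zeta-z_0|\le\tfrac12|\Im z_0|\}$ is contained in $\C\backslash\R$, since every point of it has imaginary part of modulus at least $\tfrac12|\Im z_0|>0$, and likewise with $w_0$ in place of $z_0$. Hence every contour used below stays inside the region where $f$ is (separately) analytic, so all the integral representations are valid there.

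First I would fix $w$ and write Cauchy's formula for the $z$–derivative on the circle $|\zeta-z|=\tfrac12|\Im z|$, namely $\partial_z f(z,w)=\frac{1}{2\pi\iunit}\oint_{|\zeta-z|=\frac12|\Im z|}\frac{f(\zeta,w)}{(\zeta-z)^2}\,\diff\zeta$. Next I would differentiate this identity in $w$ and move $\partial_w$ inside the integral, obtaining $\partial_w\partial_z f(z,w)=\frac{1}{2\pi\iunit}\oint_{|\zeta-z|=\frac12|\Im z|}\frac{\partial_w f(\zeta,w)}{(\zeta-z)^2}\,\diff\zeta$; the interchange is routine (Morera plus Fubini, or a uniform difference–quotient bound on the compact contour). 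For each $\zeta$ on this circle one has $|\Im\zeta|\ge\tfrac12|\Im z|>0$, so $\zeta\in\C\backslash\R$ and $\eta\mapsto f(\zeta,\eta)$ is analytic on a neighbourhood of $\{\eta:|\eta-w|\le\tfrac12|\Im w|\}$; the one–dimensional Cauchy estimate applied in $\eta$ then gives $|\partial_w f(\zeta,w)|\le\tfrac{2}{|\Im w|}\max_{|\eta-w|=\frac12|\Im w|}|f(\zeta,\eta)|$.

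Finally I would bound the $\zeta$–integral trivially: its length is $\pi|\Im z|$, the factor $|\zeta-z|^{-2}$ equals $4|\Im z|^{-2}$ on the contour, and the numerator is at most $\tfrac{2}{|\Im w|}$ times the maximum of $|f|$ over the product of the two circles, so $|\partial_w\partial_z f(z,w)|\le\frac{1}{2\pi}\cdot\pi|\Im z|\cdot\frac{4}{|\Im z|^2}\cdot\frac{2}{|\Im w|}\max|f|=\frac{4}{|\Im z||\Im w|}\max|f|$, which is exactly the claimed inequality. There is essentially no serious obstacle here; the only point deserving a line of justification is the interchange of $\partial_w$ with the contour integral, and if one wishes to sidestep it entirely one can instead apply the two one–dimensional estimates in the opposite order, or invoke the Cauchy integral formula directly on the polydisc $\{|\zeta-z|\le\tfrac12|\Im z|\}\times\{|\eta-w|\le\tfrac12|\Im w|\}$ (using Hartogs' theorem to pass from separate to joint analyticity).
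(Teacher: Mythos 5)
Your proof is correct and is essentially the same as the paper's: both are the Cauchy estimate on the bidisc of radii $\tfrac12|\Im z|$ and $\tfrac12|\Im w|$, the paper writing the double contour integral directly while you iterate the one-variable formula (an alternative you yourself note at the end). The geometric observation that these discs stay in $\C\backslash\R$ and the final arithmetic both check out.
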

\begin{proof}
    By Cauchy integral theorem 
    $$
        \frac{\partial}{\partial z}\frac{\partial}{\partial w} f(z, w)=\frac{1}{-4 \pi^2} \iint_{\substack{|\tilde{z}-z|=0.5|\Im z|\\ |\tilde{w}-w|=0.5|\Im w|}} \frac{f(\tilde{z}, \tilde{w})}{(\tilde{z}-z)^2(\tilde{w}-w)^2} d \tilde{z} \tilde{w}.
    $$
    Thus, 
    \begin{multline*}
        \left|\frac{\partial}{\partial z}\frac{\partial}{\partial w} f(z, w)\right|\leq \\ \frac{1}{4 \pi^2} \times \pi\left|\Im z\right|\times \pi\left|\Im z\right|\times\frac{\max_{\substack{|\tilde{z}-z|=0.5|\Im z|\\ |\tilde{w}-w|=0.5|\Im w|}} \left|f\left(\tilde{z}, \tilde{w}\right)\right|}{0.25 |\Im z|^2 \times 0.25 |\Im w|^2} = \\ 
        4 \frac{1}{|\Im z||\Im w|} \max_{\substack{|\tilde{z}-z|=0.5|\Im z|\\ |\tilde{w}-w|=0.5|\Im w|}} \left|f\left(\tilde{z}, \tilde{w}\right)\right|
        .
    \end{multline*}
\end{proof}

\begin{lemma}
    \label{lemma:analytic_convergence}
    Suppose that $f^{(k)}_N(z, w)$ the sequence of analytic on $z, w \in \C \backslash \R$ random functions such that for all fixed $z, w \in \C \backslash \R$
    $$
        f^{(k)}_N(z, w)\underset{N \rightarrow \infty}{\rightarrow}0
    $$
    uniformly on $k$ and for all $k, N$ with probability $1$ for all $z, w \in \C \backslash \R$
    $$
        \left|f^{(k)}_N(z, w)\right|\leq D(z, w)
    $$
    where $D(z, w)$ is a continuous function on $z, w \in \C \backslash \R.$ Then for all $z, w \in \C \backslash \R$
    $$
        \frac{\partial}{\partial z}\frac{\partial}{\partial w}f^{(k)}_N(z, w)\underset{N \rightarrow \infty}{\rightarrow}0
    $$
    uniformly on k in probability. 
    
\end{lemma}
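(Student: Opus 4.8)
The plan is to represent the mixed derivative $\partial_z\partial_w f^{(k)}_N$ as a double contour integral of $f^{(k)}_N$ itself and then push the pointwise-in-$(z,w)$ convergence in probability through that integral using the fixed deterministic majorant $D$. First I would fix $z,w\in\C\backslash\R$, set $r_z:=\tfrac12|\Im z|$ and $r_w:=\tfrac12|\Im w|$, and note that the circles $\{|\tilde z-z|=r_z\}$ and $\{|\tilde w-w|=r_w\}$ lie entirely in $\C\backslash\R$. Exactly as in the proof of Lemma~\ref{lemma:cauchy_inequality}, the Cauchy integral formula gives
$$
\partial_z\partial_w f^{(k)}_N(z,w)=\frac{1}{-4\pi^2}\iint_{\substack{|\tilde z-z|=r_z\\ |\tilde w-w|=r_w}}\frac{f^{(k)}_N(\tilde z,\tilde w)}{(\tilde z-z)^2(\tilde w-w)^2}\,d\tilde z\,d\tilde w ,
$$
and, parametrising $\tilde z=z+r_ze^{\iunit\theta}$ and $\tilde w=w+r_we^{\iunit\phi}$, this turns into the pointwise bound
$$
\big|\partial_z\partial_w f^{(k)}_N(z,w)\big|\le\frac{1}{4\pi^2 r_z r_w}\int_0^{2\pi}\!\!\int_0^{2\pi}\big|f^{(k)}_N(z+r_ze^{\iunit\theta},w+r_we^{\iunit\phi})\big|\,d\theta\,d\phi .
$$

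Next I would take expectations, bound $\Exp$ of the right-hand side by the integral of $\Exp|f^{(k)}_N|$ (Tonelli), and take $\sup_k$ under the integral sign to obtain a $k$-free majorant:
$$
\sup_k\Exp\big|\partial_z\partial_w f^{(k)}_N(z,w)\big|\le\frac{1}{4\pi^2 r_z r_w}\int_0^{2\pi}\!\!\int_0^{2\pi}\Big(\sup_k\Exp\big|f^{(k)}_N(z+r_ze^{\iunit\theta},w+r_we^{\iunit\phi})\big|\Big)\,d\theta\,d\phi .
$$
For each fixed $(\theta,\phi)$ the argument point lies in $\C\backslash\R$, where by hypothesis $f^{(k)}_N\to0$ in probability uniformly on $k$ and $|f^{(k)}_N|\le D$; running the bounded-convergence estimate from the proof of Lemma~\ref{lemma:convergenceofaverages} (convergence in probability together with a uniform a.s. bound gives $L^1$ convergence, here uniformly on $k$) yields $\sup_k\Exp|f^{(k)}_N(z+r_ze^{\iunit\theta},w+r_we^{\iunit\phi})|\to0$ as $N\to\infty$. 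Since this quantity is dominated by $D(z+r_ze^{\iunit\theta},w+r_we^{\iunit\phi})$, which is continuous hence integrable on the compact square $[0,2\pi]^2$, the Dominated Convergence Theorem lets me pass to the limit under the $(\theta,\phi)$-integral, so the right-hand side above tends to $0$. Markov's inequality then gives $\sup_k\Prob\big(|\partial_z\partial_w f^{(k)}_N(z,w)|>\varepsilon\big)\to0$ for every $\varepsilon>0$, which is exactly convergence in probability uniformly on $k$.

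The only genuinely delicate point is this transfer across the integral. One cannot simply dominate $\partial_z\partial_w f^{(k)}_N$ by the \emph{maximum} of $|f^{(k)}_N|$ over the contour (the crude form of Cauchy's inequality), since a supremum over a continuum of points need not converge in probability even when every point does; keeping the full integral representation and leaning on the fixed integrable majorant $D$, so that Dominated Convergence applies directly on the contour, is what makes the argument legitimate. Everything else — the Cauchy formula, the $L^1$ upgrade, Markov — is routine. A minor bookkeeping point is to check that all contours stay inside $\C\backslash\R$, which is why the radii are taken to be $\tfrac12|\Im z|$ and $\tfrac12|\Im w|$ rather than anything larger.
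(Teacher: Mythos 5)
Your proof is correct, but it takes a genuinely different route from the paper's. You keep the full Cauchy integral representation of $\partial_z\partial_w f^{(k)}_N$, pass to expectations, upgrade the pointwise convergence in probability to $L^1$ convergence at each contour point via the a.s.\ bound by $D$, apply dominated convergence on the compact contour, and finish with Markov's inequality. The paper instead works at the level of the supremum over the contour: it uses the bound by the continuous $D$ on a slightly larger bidisc to extract a uniform (in $N,k$) Lipschitz constant for $f^{(k)}_N$ on the contour, covers the contour by a finite $\epsilon$-net, applies the uniform convergence in probability at the finitely many net points with a union bound, concludes that $\max$ of $|f^{(k)}_N|$ over the contour tends to $0$ in probability uniformly in $k$, and only then invokes the ``crude'' Cauchy inequality (Lemma~\ref{lemma:cauchy_inequality}). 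So the delicate point you flag --- that a supremum over a continuum need not inherit convergence in probability from pointwise convergence --- is real, but the paper resolves it by equicontinuity rather than by avoiding the supremum; your objection to the max-based route is therefore too strong as stated, even though your own workaround is legitimate. Each approach buys something: yours is shorter and avoids the net construction entirely, at the cost of routing everything through first moments (harmless here, since $D$ dominates a.s.); the paper's yields the stronger intermediate fact that the contour supremum itself converges in probability, which is occasionally useful elsewhere. One small point to make explicit in your version is the measurability/finiteness of $\sup_k\Exp|f^{(k)}_N|$ as a function of the contour parameters, but since $k$ ranges over a finite (or countable) index set and the integrand is continuous and dominated by $D$, this is immediate.
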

\begin{proof}
    There exists a constant $M$ such that $|f_{N, k}(\tilde{z}, \tilde{w})|\leq M$ for $\tilde{z}: |\tilde{z}-z|\leq 0.8|\Im z|$ and  $\tilde{w}: |\tilde{w}-w|\leq 0.8|\Im w|.$ 
    Thus, there exists $C$ such that for all $z_1, z_2: |z_i-z|\leq 0.75|\Im z|$ and  $w_1, w_2: |w_i-w|\leq 0.75|\Im w|$ 
    \begin{equation}
        \left|f(z_1, w_1)-f(z_2, w_1)\right|\leq C\left|z_1-z_2\right|
    \end{equation}
    and 
     \begin{equation}
        \left|f(z_1, w_1)-f(z_1, w_2)\right|\leq C\left|w_1-w_2\right|.
    \end{equation}
    For every $\epsilon>0$ choose a finite collection of $\tilde{z}_1, \tilde{z}_2 \dots \tilde{z}_p$  and $\tilde{w}_1, \tilde{w}_2 \dots \tilde{w}_q$, such that 
   \begin{multline}
       |\tilde{z}-z|=0.5 |\Im z|\textit{ and } |\tilde{w}-w|=0.5 |\Im w| \Rightarrow\\ \exists i<p, j<q : |\tilde{z}-\tilde{z}_i|<\epsilon \textit{ and } |\tilde{w}-\tilde{w}_j|<\epsilon
   \end{multline}    
 If $|\tilde{z}-\tilde{z}_i|<\epsilon \textit{ and } |\tilde{w}-\tilde{w}_j|<\epsilon$ thus, using the equation  we can get, that
   \begin{multline}
   |f^{(k)}_{N}(\tilde{z}, \tilde{w})-f^{(k)}_{N}(\tilde{z}_i, \tilde{w}_j)|\leq\\|f^{(k)}_{N}(\tilde{z}, \tilde{w})-f^{(k)}_{N}(\tilde{z}, w_j)|+|f^{(k)}_{N}(\tilde{z}, w_j)-f^{(k)}_{N}(\tilde{z}_i, \tilde{w}_j)|\leq 2 C \epsilon.
   \end{multline}
   There exists $N_0$ such that $\forall N > N_0$ $\max_{k}\Prob\left[\max_{i, j}|f_N^{(k)}(\tilde{z}_i, \tilde{w}_j)|>\epsilon \right]<\frac{\epsilon}{pq},$ thus for all $N>N_0,$ for all $k$
   \begin{equation}
       \max_k\Prob \left[\max_{\substack{|\tilde{z}-z|=0.5|\Im z| \\ |\tilde{w}-w|=0.5 |\Im w|}} \left|f^{(k)}_{N}(\tilde{z}, \tilde{w})\right|> 2C\epsilon +\varepsilon \right]<\epsilon.
   \end{equation}
   This way, using the Cauchy theorem, we get the statement of the Lemma. 
\end{proof}

\begin{lemma}[\cite{LM22}]
If $\Re(\sigma)<0,$
\begin{equation}
\int_{0}^{\infty} \frac{\exp (r \sigma)-r \sigma-1}{r^{\frac{\alpha}{2}+1}} \mathrm{~d} r = (-\sigma)^{\alpha/2} \times \Gamma(-\alpha/2)
\end{equation}\label{lemma:raising_into_power}
\end{lemma}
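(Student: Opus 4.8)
The plan is to collapse the integral onto a classical Gamma integral by integrating by parts twice, using the two subtracted Taylor terms $-r\sigma-1$ in the numerator to kill all boundary contributions, after which the functional equation of $\Gamma$ produces the stated right-hand side. Throughout set $\rho:=-\sigma$, so $\Re\rho>0$, and recall that the standing assumption $2<\alpha<4$ gives $1<\tfrac{\alpha}{2}<2$. First one records that $I(\sigma):=\int_0^\infty \frac{e^{r\sigma}-r\sigma-1}{r^{\alpha/2+1}}\diff r$ converges absolutely on $\{\Re\sigma<0\}$: near $r=0$, $e^{r\sigma}-r\sigma-1=O(r^2)$, so the integrand is $O(r^{1-\alpha/2})$, integrable since $\tfrac{\alpha}{2}<2$; near $r=\infty$, $e^{r\sigma}$ decays because $\Re\sigma<0$, so the numerator is $O(r)$ and the integrand is $O(r^{-\alpha/2})$, integrable since $\tfrac{\alpha}{2}>1$. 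These same two bounds are exactly what makes every boundary term below vanish.

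Next I would integrate by parts twice. Using $\frac{\diff}{\diff r}(e^{r\sigma}-r\sigma-1)=\sigma(e^{r\sigma}-1)$ and the antiderivative $-\tfrac{2}{\alpha}r^{-\alpha/2}$ of $r^{-\alpha/2-1}$, the first step gives $I(\sigma)=\frac{\sigma}{\alpha/2}\int_0^\infty r^{-\alpha/2}(e^{r\sigma}-1)\diff r$, the boundary term being $O(r^{2-\alpha/2})$ at $0$ and $O(r^{1-\alpha/2})$ at $\infty$. A second integration by parts, with $\frac{\diff}{\diff r}(e^{r\sigma}-1)=\sigma e^{r\sigma}$ and antiderivative $\frac{r^{1-\alpha/2}}{1-\alpha/2}$ of $r^{-\alpha/2}$, yields
\[
I(\sigma)=\frac{\sigma^2}{(\alpha/2)(\alpha/2-1)}\int_0^\infty r^{1-\alpha/2}e^{r\sigma}\diff r ,
\]
with boundary terms once more $O(r^{2-\alpha/2})$ and $O(r^{1-\alpha/2})$.

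Finally I would evaluate the remaining integral by the standard identity $\int_0^\infty r^{a-1}e^{-\rho r}\diff r=\Gamma(a)\rho^{-a}$, valid for $\Re a>0$ and $\Re\rho>0$ with the principal branch of $\rho^{-a}$; here $a=2-\tfrac{\alpha}{2}\in(0,1)$ and $\rho=-\sigma$, so $\int_0^\infty r^{1-\alpha/2}e^{r\sigma}\diff r=\Gamma(2-\tfrac{\alpha}{2})(-\sigma)^{\alpha/2-2}$. Substituting and using $\sigma^2=(-\sigma)^2$ together with additivity of exponents for the principal power on $\{\Re(-\sigma)>0\}$ gives $I(\sigma)=(-\sigma)^{\alpha/2}\,\tfrac{\Gamma(2-\alpha/2)}{(\alpha/2)(\alpha/2-1)}$, and the functional equation $\Gamma(2-\tfrac{\alpha}{2})=(1-\tfrac{\alpha}{2})(-\tfrac{\alpha}{2})\Gamma(-\tfrac{\alpha}{2})=(\tfrac{\alpha}{2}-1)(\tfrac{\alpha}{2})\Gamma(-\tfrac{\alpha}{2})$ finishes the computation: $I(\sigma)=(-\sigma)^{\alpha/2}\Gamma(-\tfrac{\alpha}{2})$.

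I expect the only point requiring genuine care to be the verification that the boundary terms in the two integrations by parts vanish — this is exactly where $1<\tfrac{\alpha}{2}<2$ and $\Re\sigma<0$ are used — together with keeping the branch of $(-\sigma)^{\alpha/2}$ consistent with the $\rho^{-a}$ convention of the Gamma integral. As an alternative route avoiding integration by parts: prove the identity first for $\sigma\in(-\infty,0)$ by the substitution $u=-\sigma r$ and the meromorphic-continuation formula $\int_0^\infty (e^{-u}-1+u)u^{s-1}\diff u=\Gamma(s)$ valid for $-2<\Re s<-1$ (obtained by subtracting the first two Taylor terms of $e^{-u}$ and matching the meromorphic extension $\Gamma(s)=\int_1^\infty e^{-u}u^{s-1}\diff u+\sum_{k\ge 0}\frac{(-1)^k}{k!(k+s)}$), then extend to all of $\{\Re\sigma<0\}$ by the identity theorem, since both sides are holomorphic on this connected region and agree on the negative real axis.
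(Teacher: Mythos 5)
Your proof is correct. The paper itself does not prove this lemma --- it is stated in the appendix and cited from \cite{LM22} --- so there is no in-paper argument to compare against line by line. Your double integration by parts is sound: the convergence analysis at $0$ and $\infty$ correctly uses $1<\tfrac{\alpha}{2}<2$ and $\Re\sigma<0$, all four boundary terms vanish for exactly the reasons you state, the reduction to $\Gamma(2-\tfrac{\alpha}{2})(-\sigma)^{\alpha/2-2}$ is valid with the principal branch on the right half-plane $\Re(-\sigma)>0$ (where exponent additivity is unproblematic), and the functional equation $\Gamma(2-\tfrac{\alpha}{2})=(\tfrac{\alpha}{2}-1)(\tfrac{\alpha}{2})\Gamma(-\tfrac{\alpha}{2})$ closes the computation. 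For what it is worth, the closest computation actually carried out in the paper (the evaluation of $\iunit\omega\int_0^\infty t^{-\alpha/2}\bigl(\exp[-\iunit\omega t]-1\bigr)\diff t$ in the proof of part 5 of Lemma~\ref{lem:truncbound}) uses a different device: it writes the subtracted exponential as $\int_0^1\frac{\diff}{\diff\nu}\exp(-\iunit\omega\nu t)\diff\nu$, swaps the order of integration, and rescales, which trades your boundary-term bookkeeping for a Fubini justification; both routes land on the same Gamma integral, and yours is arguably the more elementary of the two. Your sketched alternative via real $\sigma$ plus the identity theorem is also a legitimate fallback, though it is not needed given that the direct argument already works on all of $\{\Re\sigma<0\}$.
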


\begin{lemma}
If $\Re \sigma_1,\textit{ } \Re \sigma_1 > 0$
\begin{equation}
    \int_0^{\infty} \frac{e^{-\sigma_1 t} - e^{-\sigma_2 t}}{t} \diff t= -\log\sigma_1 + \log\sigma_2 = - \log\frac{\sigma_1}{\sigma_2}
\end{equation}
\label{lemma:subtract}
\end{lemma}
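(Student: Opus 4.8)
The statement is the Frullani integral with complex parameters (the hypothesis should read $\Re\sigma_1,\Re\sigma_2>0$), and the plan is to represent the integrand parametrically and then interchange the order of integration. First I would record the elementary identity $\frac{e^{-\sigma_1 t}-e^{-\sigma_2 t}}{t}=\int_{\gamma}e^{-st}\,\diff s$, valid for every $t>0$, where $\gamma$ is the oriented straight segment from $\sigma_1$ to $\sigma_2$; this follows immediately from $\frac{\diff}{\diff s}\bigl(-e^{-st}/t\bigr)=e^{-st}$. Since $\{\Re s>0\}$ is convex and contains both endpoints, every $s$ on $\gamma$ satisfies $\Re s\ge c:=\min(\Re\sigma_1,\Re\sigma_2)>0$. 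In particular the integrand on the left is continuous on $(0,\infty)$, extends to the value $\sigma_2-\sigma_1$ at $t=0$, and is dominated by $|\sigma_2-\sigma_1|\,e^{-ct}/t$ for $t\ge 1$; hence the left-hand integral converges absolutely.

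Next I would integrate in $t$ and apply Fubini's theorem to $\int_0^\infty\!\int_\gamma e^{-st}\,\diff s\,\diff t$. The interchange is justified by the bound $\int_0^\infty\!\int_\gamma|e^{-st}|\,|\diff s|\,\diff t\le|\sigma_2-\sigma_1|\int_0^\infty e^{-ct}\,\diff t=|\sigma_2-\sigma_1|/c<\infty$, where I note that this majorant carries no $1/t$ factor, so the origin causes no trouble. After swapping, the inner integral is $\int_0^\infty e^{-st}\,\diff t=1/s$ for $\Re s>0$, and the whole expression becomes $\int_\gamma \diff s/s$.

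Finally I would evaluate $\int_\gamma \diff s/s$. The principal branch of $\log$ is holomorphic on the simply connected domain $\{\Re s>0\}$, which contains $\gamma$, and it is a primitive of $1/s$ there, so $\int_\gamma \diff s/s=\log\sigma_2-\log\sigma_1=-\log(\sigma_1/\sigma_2)$, which is the claimed identity.

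There is no serious obstacle here; the only step deserving attention is the Fubini interchange, which the displayed majorant settles, together with the observation that the $1/t$ singularity of the original integrand is removable. As an alternative one could first prove the identity for real $\sigma_1,\sigma_2>0$ by the classical substitutions $u=\sigma_1 t$ and $u=\sigma_2 t$ (reducing $\int_\varepsilon^R$ to $\int_{\sigma_1\varepsilon}^{\sigma_2\varepsilon}\frac{e^{-u}}{u}\,\diff u-\int_{\sigma_1 R}^{\sigma_2 R}\frac{e^{-u}}{u}\,\diff u$ and letting $\varepsilon\to0$, $R\to\infty$), and then extend to $\Re\sigma_i>0$ by analyticity, using Morera's theorem to verify that both sides are holomorphic in $\sigma_1$ and in $\sigma_2$ separately; but the Fubini route is shorter and avoids invoking analytic continuation.
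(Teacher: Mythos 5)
Your proof is correct and complete: the paper states this lemma in the appendix without proof (it is a standard Frullani-type integral), so there is no argument of the paper's to compare against. The representation $\frac{e^{-\sigma_1 t}-e^{-\sigma_2 t}}{t}=\int_\gamma e^{-st}\,\diff s$, the Fubini interchange justified by the majorant $|\sigma_2-\sigma_1|e^{-ct}$ with $c=\min(\Re\sigma_1,\Re\sigma_2)>0$ (valid since the segment $\gamma$ stays in the half-plane $\Re s\ge c$ by convexity), and the evaluation $\int_\gamma \diff s/s=\log\sigma_2-\log\sigma_1$ via the principal branch on the simply connected right half-plane are all sound; you are also right that the hypothesis should read $\Re\sigma_1,\Re\sigma_2>0$.
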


\begin{lemma}[\cite{LM22}]
 For $\sigma_1$ and $\sigma_2$ $ \in$ $\mathbb{C}\backslash \mathbb{R}$
 \begin{equation}
     \int_0^{\infty} \frac{r^{\frac{\alpha}{2} -1 }}{(r-\sigma_1)(r - \sigma_2 )} = \frac{\pi\left((-\sigma_1)^{\alpha/2} \sigma_2 - (-\sigma_2)^{\alpha/2}\sigma_1 \right)}{\sin\left(\frac{\pi\alpha}{2} \right)\sigma_2 \sigma_1 (\sigma_2 - \sigma_1)}.
 \end{equation} 
 \label{lemma:int_r}
 \end{lemma}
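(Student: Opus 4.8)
The plan is to evaluate the integral by a keyhole contour around the positive real axis and then reconcile the answer with the stated form through branch bookkeeping. Write $\beta:=\tfrac{\alpha}{2}-1$, so that the hypotheses $2<\alpha<4$ give $\beta\in(0,1)$; this is exactly what makes the integral converge, since the integrand is $O(r^{\beta})$ as $r\to0^{+}$ (integrable because $\beta>-1$) and $O(r^{\beta-2})$ as $r\to\infty$ (integrable because $\beta-2<-1$, i.e. $\alpha<4$). Because $\sigma_1,\sigma_2\in\C\backslash\R$, the denominator $(r-\sigma_1)(r-\sigma_2)$ is bounded away from $0$ for $r\in(0,\infty)$, so the integrand is continuous there. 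I may and will assume $\sigma_1\neq\sigma_2$: the right-hand side is a divided difference in $\sigma_1,\sigma_2$, hence has a removable singularity on the diagonal, so the case $\sigma_1=\sigma_2$ follows by continuity of both sides. (A partial-fraction reduction to the classical formula $\int_0^\infty x^{\mu-1}/(x+a)\,\diff x=\pi a^{\mu-1}/\sin\pi\mu$ is not directly available because here $\mu=\alpha/2\in(1,2)$ makes each one-pole piece diverge at infinity; the keyhole contour handles the convergent combination in one go.)

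Fix the determination of $r\mapsto r^{\beta}$ with branch cut along $[0,\infty)$ and argument in $(0,2\pi)$, and set $f(r):=r^{\beta}\big/\big((r-\sigma_1)(r-\sigma_2)\big)$. Integrate $f$ over the standard keyhole contour: the segment just above $[\varepsilon,R]$ (argument $0$), the circle $|r|=R$ counterclockwise, the segment just below $[\varepsilon,R]$ traversed from $R$ to $\varepsilon$ (argument $2\pi$), and the circle $|r|=\varepsilon$ clockwise, with $\varepsilon$ small and $R$ large enough that $\sigma_1,\sigma_2$ lie in the enclosed region. On $|r|=R$ one has $|f|=O(R^{\beta-2})$, so that piece is $O(R^{\beta-1})\to0$; on $|r|=\varepsilon$ one has $|f|=O(\varepsilon^{\beta})$, so that piece is $O(\varepsilon^{\beta+1})\to0$. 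On the lower edge $r=\rho e^{2\pi\iunit}$, so $r^{\beta}$ acquires the factor $e^{2\pi\iunit\beta}=e^{\iunit\pi\alpha}$; letting $\varepsilon\to0$ and $R\to\infty$, the two edges combine to $(1-e^{\iunit\pi\alpha})\int_{0}^{\infty}f(r)\,\diff r$. By the residue theorem this equals $2\pi\iunit$ times the sum of the residues of $f$ at the enclosed poles $\sigma_1,\sigma_2$, and the partial fraction $1/((r-\sigma_1)(r-\sigma_2))$ gives $\operatorname{Res}_{r=\sigma_j}f=\sigma_j^{\beta}/(\sigma_j-\sigma_{3-j})$, with $\sigma_j^{\beta}$ in the $(0,2\pi)$-branch. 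Hence
\[
\int_{0}^{\infty}\frac{r^{\beta}}{(r-\sigma_1)(r-\sigma_2)}\,\diff r=\frac{2\pi\iunit}{1-e^{\iunit\pi\alpha}}\cdot\frac{\sigma_1^{\beta}-\sigma_2^{\beta}}{\sigma_1-\sigma_2}.
\]

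It remains to rewrite the prefactor and convert the powers. Using $1-e^{\iunit\pi\alpha}=-2\iunit\,e^{\iunit\pi\alpha/2}\sin(\pi\alpha/2)$ gives $\dfrac{2\pi\iunit}{1-e^{\iunit\pi\alpha}}=-\dfrac{\pi\,e^{-\iunit\pi\alpha/2}}{\sin(\pi\alpha/2)}$. The crucial point is the branch identity: for every $\sigma\in\C\backslash\R$, with $\sigma^{\beta}$ in the $(0,2\pi)$-branch used above and $(-\sigma)^{\beta}$ in the principal branch,
\[
(-\sigma)^{\beta}=e^{-\iunit\pi\beta}\,\sigma^{\beta},
\]
which I would check by writing $\sigma=|\sigma|e^{\iunit\theta}$ and treating $\Im\sigma>0$ and $\Im\sigma<0$ separately (in both cases the ratio of the two determinations works out to $e^{-\iunit\pi\beta}$). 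Since $\beta=\tfrac{\alpha}{2}-1$ we have $e^{-\iunit\pi\beta}=-e^{-\iunit\pi\alpha/2}$, so $-e^{-\iunit\pi\alpha/2}\sigma^{\beta}=(-\sigma)^{\beta}$. Applying this to $\sigma_1$ and $\sigma_2$ turns the displayed formula into
\[
\int_{0}^{\infty}\frac{r^{\alpha/2-1}}{(r-\sigma_1)(r-\sigma_2)}\,\diff r=\frac{\pi}{\sin(\pi\alpha/2)}\cdot\frac{(-\sigma_1)^{\beta}-(-\sigma_2)^{\beta}}{\sigma_1-\sigma_2},
\]
and finally $(-\sigma)^{\beta}=(-\sigma)^{\alpha/2}/(-\sigma)=-(-\sigma)^{\alpha/2}/\sigma$ turns the right-hand side into $\dfrac{\pi\big((-\sigma_1)^{\alpha/2}\sigma_2-(-\sigma_2)^{\alpha/2}\sigma_1\big)}{\sin(\pi\alpha/2)\,\sigma_1\sigma_2(\sigma_2-\sigma_1)}$, which is the statement of the Lemma.

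The arc estimates and the residue computation are routine; the step needing genuine care — and where it is easy to pick up a spurious $e^{\iunit\pi}$ or a sign error — is the branch bookkeeping, i.e. keeping straight which determination of $r^{\beta}$, of $\sigma_j^{\beta}$, and of $(-\sigma_j)^{\alpha/2}$ is in force, and verifying $(-\sigma)^{\beta}=e^{-\iunit\pi\beta}\sigma^{\beta}$ uniformly over $\sigma\in\C\backslash\R$. As a consistency check I would specialise to $\sigma_1=-a_1$, $\sigma_2=-a_2$ with $a_1,a_2>0$: the integrand is then manifestly positive, the right-hand side reduces to $\dfrac{\pi(a_1^{\beta}-a_2^{\beta})}{\sin(\pi\alpha/2)\,(a_2-a_1)}$, and this agrees with the classical Beta-type evaluation $\int_0^{\infty}\dfrac{x^{\mu-1}}{(x+a)(x+b)}\,\diff x=\dfrac{\pi(a^{\mu-1}-b^{\mu-1})}{(b-a)\sin(\pi\mu)}$ with $\mu=\alpha/2\in(1,2)$.
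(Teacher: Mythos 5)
Your proof is correct. Note that the paper does not prove this lemma at all --- it is imported from \cite{LM22} without argument --- so there is no internal proof to compare against; your keyhole-contour derivation is a complete, self-contained justification. I checked the two delicate points: the branch identity $(-\sigma)^{\beta}=e^{-\iunit\pi\beta}\sigma^{\beta}$ does hold uniformly for $\sigma\in\C\backslash\R$ when $\sigma^{\beta}$ carries the $(0,2\pi)$ determination and $(-\sigma)^{\beta}$ the principal one (in both half-planes the principal argument of $-\sigma$ is $\arg\sigma-\pi$), and the prefactor simplification $2\pi\iunit/(1-e^{\iunit\pi\alpha})=-\pi e^{-\iunit\pi\alpha/2}/\sin(\pi\alpha/2)$ combines with $e^{-\iunit\pi\beta}=-e^{-\iunit\pi\alpha/2}$ exactly as you say, yielding the stated right-hand side after the substitution $(-\sigma)^{\beta}=-(-\sigma)^{\alpha/2}/\sigma$. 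Your remarks on convergence (needing $2<\alpha<4$), on the removable diagonal $\sigma_1=\sigma_2$, and on why a naive partial-fraction reduction to the one-pole formula fails for $\mu=\alpha/2>1$ are all accurate, and the positive-real-axis consistency check comes out with the right sign since $\sin(\pi\alpha/2)<0$ on this range of $\alpha$.
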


\section*{Acknowledgements}
I am very grateful to Asad Lodhia for his invaluable assistance with my paper. His expertise and guidance significantly improved the rigorousness of the proofs provided, and I cannot thank him enough for his kindness and support. Additionally, I would like to express my heartfelt gratitude to my supervisor, Anna Maltsev, for her continuous encouragement and feedback throughout the entire process. 

\bibliographystyle{alpha}
\bibliography{biblio}

\begin{thebibliography}{BGGM13}

\bibitem[AAP09]{auffinger_poisson_2009}
Antonio Auffinger, G{\'e}rard~Ben Arous, and Sandrine P{\'e}ch{\'e}.
\newblock {Poisson convergence for the largest eigenvalues of heavy tailed
  random matrices}.
\newblock {\em Annales de l'Institut Henri Poincar{\'e}, Probabilit{\'e}s et
  Statistiques}, 45(3):589 -- 610, 2009.

\bibitem[BGGM13]{benaych-georges_central_2014}
Florent Benaych-Georges, Alice Guionnet, and Camille Male.
\newblock Central limit theorems for linear statistics of heavy tailed random
  matrices.
\newblock {\em Communications in Mathematical Physics}, 329, 01 2013.

\bibitem[BGM16]{BGM16}
Florent Benaych-Georges and Anna Maltsev.
\newblock Fluctuations of linear statistics of half-heavy-tailed random
  matrices.
\newblock {\em Stochastic Process. Appl.}, 126(11):3331--3352, 2016.

\bibitem[BGT87]{bingham_regular_1989}
N.~H. Bingham, C.~M. Goldie, and J.~L. Teugels.
\newblock {\em Regular Variation}.
\newblock Encyclopedia of Mathematics and its Applications. Cambridge
  University Press, 1987.

\bibitem[BJYZ09]{bai_corrections_2009}
Zhidong Bai, Dandan Jiang, Jian-Feng Yao, and Shurong Zheng.
\newblock {Corrections to LRT on large-dimensional covariance matrix by RMT}.
\newblock {\em The Annals of Statistics}, 37(6B):3822 -- 3840, 2009.

\bibitem[Bor10]{borodin_clt_2014}
Alexei Borodin.
\newblock Clt for spectra of submatrices of wigner random matrices.
\newblock {\em Mosc. Math. J.}, 14, 10 2010.

\bibitem[BS98]{bai_no_1998}
Z.~D. Bai and Jack~W. Silverstein.
\newblock No eigenvalues outside the support of the limiting spectral
  distribution of large-dimensional sample covariance matrices.
\newblock {\em The Annals of Probability}, 26(1):316--345, 1998.

\bibitem[BS04]{bai_clt_2004}
Z.~D. Bai and Jack~W. Silverstein.
\newblock {CLT for linear spectral statistics of large-dimensional sample
  covariance matrices}.
\newblock {\em The Annals of Probability}, 32(1A):553 -- 605, 2004.

\bibitem[BS10]{BS10}
Zhidong Bai and Jack~W. Silverstein.
\newblock {\em Spectral analysis of large dimensional random matrices}.
\newblock Springer Series in Statistics. Springer, New York, second edition,
  2010.

\bibitem[BSY88]{bai_note_1988}
Z.D Bai, Jack~W Silverstein, and Y.Q Yin.
\newblock A note on the largest eigenvalue of a large dimensional sample
  covariance matrix.
\newblock {\em Journal of Multivariate Analysis}, 26(2):166--168, 1988.

\bibitem[BY88]{bai_convergence_1988}
Z.~D. Bai and Y.~Q. Yin.
\newblock {Convergence to the Semicircle Law}.
\newblock {\em The Annals of Probability}, 16(2):863 -- 875, 1988.

\bibitem[BY93]{bai_limit_1993}
Z.~D. Bai and Y.~Q. Yin.
\newblock {Limit of the Smallest Eigenvalue of a Large Dimensional Sample
  Covariance Matrix}.
\newblock {\em The Annals of Probability}, 21(3):1275 -- 1294, 1993.

\bibitem[DP18]{DP2018}
Ioana Dumitriu and Elliot Paquette.
\newblock Spectra of overlapping wishart matrices and the gaussian free field.
\newblock {\em Random Matrices: Theory and Applications}, 07(02):1850003, 2018.

\bibitem[EW16]{efthimiou_household_2016}
Costas~J. Efthimiou and Adam Wearne.
\newblock {Household income distribution in the USA}.
\newblock {\em The European Physical Journal B: Condensed Matter and Complex
  Systems}, 89(3):1--6, March 2016.

\bibitem[EY17]{erdos_dynamical_nodate}
László Erdös and Horng-Tzer Yau.
\newblock {\em A Dynamical Approach to Random Matrix Theory}.
\newblock American Mathematical Soc., 2017.

\bibitem[GGPS03]{gabaix_theory_2003}
Xavier Gabaix, Parameswaran Gopikrishnan, Vasiliki Plerou, and H.~Stanley.
\newblock A theory of power-law distributions in financial market fluctuations.
\newblock {\em Nature}, 01 2003.

\bibitem[LM22]{LM22}
Asad Lodhia and Anna Maltsev.
\newblock Covariance kernel of linear spectral statistics for half-heavy tailed
  wigner matrices.
\newblock {\em Random Matrices: Theory and Applications}, 0(0):2250054, 2022.

\bibitem[LP09]{lytova_central_2009}
A.~Lytova and L.~Pastur.
\newblock Central limit theorem for linear eigenvalue statistics of random
  matrices with independent entries.
\newblock {\em The Annals of Probability}, 37(5):1778--1840, 2009.

\bibitem[LRS20]{soshnikov_2020}
Lingyun Li, Matthew Reed, and Alexander Soshnikov.
\newblock Central limit theorem for linear eigenvalue statistics for
  submatrices of wigner random matrices.
\newblock {\em Frontiers in Applied Mathematics and Statistics}, 6, 2020.

\bibitem[LW02]{ledoit_hypothesis_2002}
Olivier Ledoit and Michael Wolf.
\newblock {Some hypothesis tests for the covariance matrix when the dimension
  is large compared to the sample size}.
\newblock {\em The Annals of Statistics}, 30(4):1081 -- 1102, 2002.

\bibitem[MM22]{MM22}
Anna~V. Maltsev and Svetlana Malysheva.
\newblock Corrigendum to ``{F}luctuations of linear statistics of
  half-heavy-tailed random matrices'' [{S}toch. {P}rocess. {A}ppl. 126 (2016)
  3331-3352].
\newblock {\em Stochastic Process. Appl.}, 146:414--415, 2022.

\bibitem[Shc11]{shcherbina_central_2011}
M.~Shcherbina.
\newblock Central limit theorem for linear eigenvalue statistics of the wigner
  and sample covariance random matrices.
\newblock {\em Zh. Mat. Fiz. Anal. Geom.}, 2011.

\bibitem[Sri05]{srivastava_tests_2005}
M.~Srivastava.
\newblock Some tests concerning the covariance matrix in high dimensional data.
\newblock {\em JOURNAL OF THE JAPAN STATISTICAL SOCIETY}, 35, 01 2005.

\bibitem[YBK88]{yin_limit_1988}
YQ~Yin, Z.~Bai, and P.~Krishnaiah.
\newblock On the limit of the largest eigenvalue of the large dimensional
  sample covariance matrix.
\newblock {\em Probability Theory and Related Fields}, 78:509--521, 08 1988.

\bibitem[YZB15]{yao_large_2015}
Jianfeng Yao, Shurong Zheng, and Zhidong Bai.
\newblock {\em Large Sample Covariance Matrices and High-Dimensional Data
  Analysis}.
\newblock Cambridge Series in Statistical and Probabilistic Mathematics.
  Cambridge University Press, 2015.

\end{thebibliography}

\end{document}